\title{Preservation under reduced products in continuous logic}
\author{Ivory Fronteau}
\date{\today}
\newtheorem{theorem}{Theorem}[section]
\newtheorem{proposition}[theorem]{Proposition}
\newtheorem{lemma}[theorem]{Lemma}
\newtheorem{fact}[theorem]{Fact}
\newtheorem{corollary}[theorem]{Corollary}
\newtheorem*{theorem*}{Theorem}
\theoremstyle{definition}
\newtheorem{definition}[theorem]{Definition}
\newtheorem{notation}[theorem]{Notation}
\theoremstyle{remark}
\newtheorem{remark}[theorem]{Remark}
\renewcommand{\phi}{\varphi}
\renewcommand{\epsilon}{\varepsilon}
\def\real{\mathbb R}
\def\defeq{\coloneqq}
\def\Horn{\mathrm{H}}
\def\indices#1{\llbracket #1 \rrbracket}
\def\Pal{\mathrm{P}}
\def\frechet{\mathrm{Fin}^\ast}
\def\SCP{\mathrm{SCP}}
\def\signature{L}
\def\filter{F}
\def\filterA{G}
\def\ultrafilter{U}
\def\HP{\mathrm{HP}}
\def\PP{\mathrm{PP}}
\def\BP{\mathrm{BP}}
\begin{document}

\maketitle

\begin{abstract}
We introduce a fragment of continuous logic, analogue of Palyutin formulas (or h-formulas) in classical model theory, which is preserved under reduced products in both directions. We use it to extend classical results on complete theories which are preserved under reduced product and their stability. We also characterise the set of Palyutin sentences, Palyutin theories and other related fragments in terms of their preservation properties, both in the classical setting and the metric one.
\end{abstract}

\section*{Introduction}

Reduced products are a generalisation of both ultraproducts and cartesian products of structures. The construction is analogue to ultraproducts but one uses any proper filter instead of an ultrafilter. The question of which elementary classes of structures are closed under certain reduced product constructions is natural and of interest. By Łoś' Theorem, we know for instance that any first-order sentence is preserved under ultraproducts. On the other hand, it is very easy to find examples of sentences which are not preserved under direct products, e.g.\ the one axiomatising fields.

Horn defined the set of what are now called \emph{Horn sentences}, a fragment of first-order logic which is \emph{preserved under reduced products} \cite{hornSentencesWhichAre1951} in the sense that if $\phi$ is a Horn sentence true for $\filter$-many $A_i$'s, then the reduced product $\prod_i A_i / \filter$ also satisfies $\phi$. Actually, this fragment is the largest enjoying this property as shown by Keisler and Galvin \cite{keislerReducedProductsHorn1965, galvinHornSentences1970}. However, Horn formulas are not necessarily \emph{copreserved under reduced products} in the sense that if a reduced product $\prod_i A_i / \filter$ satisfies a Horn formula $\phi$, then $\phi$ need not be true for many $A_i$'s, or even some $A_i$. For his study of categorical Horn classes, Palyutin introduced another fragment of first-order logic consisting of sentences which are both preserved and co-preserved under reduced products \cite{palyutinCategoricalHornClasses1980}. Formulas belonging to this fragment are referred in the literature as \emph{h-formulas} or \emph{Palyutin formulas}. The set of Palyutin formulas is defined as the smallest containing $\top$, $\bot$, atomic formulas and closed under conjunction, existential and universal quantifications, and the "h-operation" $(\phi, \psi) \mapsto (\exists x \; \phi) \land \forall x \; (\phi \to \psi)$. A \emph{Palyutin sentence} is a Palyutin formula  with no free variables. \ref{sec:classical case} is devoted to recall some of Palyutin's results. Along the way, we state an analogue of Keisler-Shelah's Theorem for Palyutin sentences, i.e.\ that two structures satisfy the same Palyutin sentences if and only if they have some isomorphic reduced powers. We also show constructively that any Palyutin formula is equivalent to a Horn formula, and prove preservation results for Palyutin theories and formulas. In particular, we show that Palyutin sentences are exactly those which are both preserved and copreserved under reduced products.

In order to apply model-theoretic tools to new structures with a more analytic flavour, Ben Yaacov et al.\ extended most of the classical framework of model theory to \emph{metric structures} \cite{benyaacovModelTheoryMetric2008}. We assume in this paper some familiarity with this framework but recall here the basics. Domains of metric structures are bounded and complete metric spaces, whereas relations are real-valued, bounded and uniformly continuous mappings. (Continuous, first-order) formulas are defined inductively starting from atomic formulas, using continuous connectives $\real^n \to \real$ and quantifiers $\sup$ and $\inf$. Note that this slightly differs from the convention of \cite{benyaacovModelTheoryMetric2008} since our formulas can be $\real$-valued and not only $[0,1]$-valued, but this change does not affect the theory. There is a very natural construction of ultraproducts for metric structures where in particular, for $a$ a tuple in $\prod_i A_i$, and $\phi(x)$ an atomic formula with $x$ having the same length as $a$, then $\phi^{\prod_i A_i / \ultrafilter} (a / \ultrafilter) = \lim_{i \to \ultrafilter} \phi^{A_i}(a_i)$. By Łoś' Theorem, the previous equation holds in fact for any first-order formula.

In \ref{sec:reduced products} we recall basic definitions and properties of reduced products in continuous logic as introduced by Lopes \cite{lopesReducedProductsSheaves2013}. The definition resembles the one of ultraproducts but the limit along an ultrafilter $\ultrafilter$ is replaced by the $\limsup$ along a filter $\filter$. Goldbring and Keisler introduced an analogue for Horn sentences they called \emph{conditional sentences} and essentially proved that they are characterised by the property of being preserved under reduced products \cite{goldbringContinuousSentencesPreserved2022}. For this reason, we will refer to them as (continuous) \emph{Horn sentences}. This section is also the occasion to state the preservation result of Goldbring and Keisler in terms of continuous formulas instead of conditions.

In the same spirit as what Goldbring and Keisler did for Horn formulas, we define in \ref{sec:palyutin formulas} an analogue of Palyutin formulas in the setting of continuous logic as the smallest set containing atomic formulas and such that :
\begin{itemize}
	\item if $\phi$ is a Palyutin formula and $U$ is a nondecreasing unary connective, then $U\phi$ is a Palyutin formula ;
	\item if $\phi, \psi$ are Palyutin formulas, then $\max(\phi, \psi)$ is a Palyutin formula ;
	\item if $\phi, \psi$ are Palyutin formulas, $D$ is a nonincreasing unary connective with fixed point $\Delta$ and $x$ is a variable, then $\max(\inf_x \phi, \sup_x \min(D \phi, \Delta, \psi))$ is a Palyutin formula.
\end{itemize}
The main new aspect of our analogue of Palyutin formulas is the need, each time a nonincreasing connective is used, to also introduce its fixed point, only to ensure they have the right preservation properties. As Palyutin did in the classical framework, we isolate a continuous theory we denote by $\SCP$ (for \emph{simple cover property}) consisting of the axioms
\[\sup_y \left( \inf_x \max \left( \phi,  \max_{j = 1}^n D_j \psi_j \right) - \max_{j = 1}^n \inf_x \max \left( \phi, D_j \psi_j \right) \right) \leq 0 ~,\]
for every Palyutin formulas $\phi, \psi_1, \dots, \psi_n$ and unary nonincreasing connectives $D_1,\dots,D_n$. We then prove the following characterisation of complete theories containing $\SCP$.
\begin{theorem*}
	Let $T$ be a complete theory. The following are equivalent :
	\begin{enumerate}[(i)]
		\item $T \models \SCP$ ;
		\item $T$ is preserved under reduced products ;
		\item $T$ is preserved under products reduced by the Fréchet filter on $\omega$.
	\end{enumerate}
\end{theorem*}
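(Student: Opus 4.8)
The plan is to prove the cycle of implications (i) $\Rightarrow$ (ii) $\Rightarrow$ (iii) $\Rightarrow$ (i). Of these, (ii) $\Rightarrow$ (iii) is immediate, since reduced products by the Fréchet filter $\frechet$ on $\omega$ are a particular case of arbitrary reduced products, so preservation under all reduced products entails preservation under the Fréchet ones.

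For (i) $\Rightarrow$ (ii), the guiding idea is that $\SCP$ is exactly the collection of conditions needed to put an arbitrary formula into Palyutin normal form modulo $T$. First I would show that, modulo any theory satisfying $\SCP$, every formula is equivalent to a Palyutin formula: the only inductive step that threatens to leave the Palyutin fragment is the passage of an $\inf_x$ across a maximum of the form $\max_{j=1}^n D_j\psi_j$, and the $\SCP$ axioms assert precisely that $\inf_x \max(\phi, \max_j D_j\psi_j) = \max_j \inf_x \max(\phi, D_j\psi_j)$ (the inequality $\geq$ being automatic). Since $T$ is complete, it is then axiomatised by conditions of the form ``$\sigma = r$'' with $\sigma$ a Palyutin sentence. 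Because Palyutin sentences are both preserved and copreserved under reduced products (established in \ref{sec:palyutin formulas}), every such condition is preserved, whence any reduced product of models of $T$ is again a model of $T$.

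The hard direction is (iii) $\Rightarrow$ (i), which I would prove by contraposition. Suppose $T \not\models \SCP$, so that for some Palyutin formulas $\phi, \psi_1, \dots, \psi_n$, nonincreasing connectives $D_1, \dots, D_n$, some model $A \models T$ and some choice of parameters there is an $\epsilon > 0$ with
\[ \inf_x \max\Bigl(\phi, \max_{j=1}^n D_j\psi_j\Bigr) \;\geq\; \max_{j=1}^n \inf_x \max(\phi, D_j\psi_j) + \epsilon . \]
Writing $r$ for the right-hand maximum, for each $j$ there is a witness $x_j$ with $\max(\phi(x_j), D_j\psi_j(x_j)) \leq r$ up to an arbitrarily small error, but no single $x$ brings $\max(\phi, \max_j D_j\psi_j)$ below $r + \epsilon$. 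I would now form the reduced power $\prod_\omega A / \frechet$ and exploit the fact that a nonincreasing connective applied to an atomic (hence Palyutin) formula turns the defining $\limsup$ into a $\liminf$: the value of $D_j\psi_j$ at a representative $(x_k)_k$ is $\liminf_k D_j\psi_j^A(x_k)$. Fixing a partition of $\omega$ into $n$ infinite pieces $S_1, \dots, S_n$ and setting $x_k \defeq x_j$ for $k \in S_j$ produces a single element of the reduced power for which, along each $S_j$, both $\phi$ and $D_j\psi_j$ are at most $r$ (up to the chosen error); hence each $\liminf_k D_j\psi_j^A(x_k) \leq r$ and $\limsup_k \phi^A(x_k) \leq r$. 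This element therefore witnesses $\inf_x \max(\phi, \max_j D_j\psi_j) \leq r$ in $\prod_\omega A / \frechet$, whereas the value of the same expression in $A$ is at least $r + \epsilon$. The $\SCP$ sentence thus takes strictly smaller value in the reduced power than in $A$, so by completeness $\prod_\omega A / \frechet \not\models T$ even though $A \models T$, and $T$ is not preserved under Fréchet reduced products.

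The main obstacle is the bookkeeping in (iii) $\Rightarrow$ (i): one must pin down the exact inductive reduced-product semantics, in particular that the relevant subformulas are evaluated by $\limsup$ and that nonincreasing connectives convert this into $\liminf$, verify that the competing term $\max_j \inf_x \max(\phi, D_j\psi_j)$ does not itself drop in the reduced power by more than a controlled amount, and handle the approximate nature of the witnesses $x_j$ together with the uniform continuity of the $D_j$ so that the $\epsilon$-gap survives the passage to the limit. Checking that the diagonal witness can be assembled coherently and that the single filter $\frechet$ (rather than a general $\filter$) already suffices for the construction are the remaining points to secure.
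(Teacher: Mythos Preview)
Your argument for (i) $\Rightarrow$ (ii) has a real gap. From ``modulo $\SCP$ every formula is approximable by Palyutin formulas'' you jump to ``$T$ is axiomatised by conditions $\sigma = r$ with $\sigma$ Palyutin''. That inference is false: a structure $M$ can satisfy every Palyutin condition of $T$ and still fail $T$, precisely because $M$ need not satisfy $\SCP$, and the Palyutin normal form is only valid modulo $\SCP$. What actually axiomatises $T$ is $T' \cup \SCP$, where $T'$ collects the Palyutin conditions $\sigma = \sigma^T$. To conclude preservation under reduced products you therefore need the separate fact that $\SCP$ itself is preserved under reduced products (\cref{lemma:pres SCP cont} in the paper), which you never invoke. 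The paper's proof is exactly this: $T' \cup \SCP$ is complete and hence equals $T$; both $T'$ (bipreservation of Palyutin sentences) and $\SCP$ (\cref{lemma:pres SCP cont}) are preserved, so $T$ is.

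Your (iii) $\Rightarrow$ (i) argument also does not close. The partition construction is the right idea, but you apply it only to the single diagonal parameter $b$ coming from $A$, and then assert that ``the $\SCP$ sentence takes strictly smaller value in the reduced power''. That does not follow: the $\SCP$ axiom is a $\sup_y$, and lowering the inner expression at one particular $y$ says nothing about the supremum over all $y \in A^{\frechet}$ (most of which are not diagonal). Your stated obstacle about the ``competing term'' not dropping is a symptom of this misdirection; in the correct argument that term is irrelevant. What one must prove is that $A^{\frechet} \models \SCP$ outright, i.e.\ for \emph{every} parameter $b' \in (A^{\frechet})^y$ the difference is $\leq 0$. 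Your partition-into-$n$-infinite-pieces trick does exactly this once you run it for an arbitrary $b'$ rather than a diagonal one, and that is the content of \cref{lemma:atomless boolean algebra SCP cont}. With that lemma in hand the implication is immediate and not by contraposition: take any $A \models T$; by (iii) $A^{\frechet} \models T$, while $A^{\frechet} \models \SCP$ always; completeness of $T$ then gives $T \models \SCP$.
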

The key properties that allow to prove this theorem and which were already noticed by Palyutin in the classical setting are that $\SCP$ is preserved under reduced products, and that a reduced product by the Fréchet filter is always model of $\SCP$. Along the way, we also deduce a Keisler-Shelah's Theorem in this context. As Palyutin did in the classical setting, we show that a complete NIP theory $T$ which is preserved under reduced products is  stable.
\begin{theorem*}
	Let $T$ be a complete theory which is preserved under reduced products (or equivalently, such that $T \models \SCP$). The following are equivalent :
	\begin{enumerate}[(i)]
		\item $T$ is stable;
		\item For every Palyutin formula $\phi(x,y)$ such that $\phi \geq 0$,
		\[T \models \sup_{y, z} \left( \left( \sup_x \left| \phi(x,y) - \phi(x,z) \right| \right) - \inf_x \max \left( \phi(x,y), \phi(x,z) \right) \right) \leq 0 ~;\]
		\item $T$ is NIP.
	\end{enumerate}
\end{theorem*}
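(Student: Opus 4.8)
The plan is to prove the cycle (i) $\Rightarrow$ (iii) $\Rightarrow$ (ii) $\Rightarrow$ (i), using the standing hypothesis $T \models \SCP$ (equivalently, preservation under reduced products) only in the last two implications. The implication (i) $\Rightarrow$ (iii) is the general fact of continuous model theory that stable theories are NIP, so nothing special is required there.

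The heart is (iii) $\Rightarrow$ (ii), which I would prove contrapositively: assuming a Palyutin formula $\phi \geq 0$ violates the displayed inequality, I produce the independence property for $\phi$ and thereby contradict NIP. Unwinding the failure of (ii) yields, in the monster model, parameters $b, c$ and some $\epsilon > 0$, together with a point $x_0$ at which $\max(\phi(x_0,b),\phi(x_0,c))$ is nearly minimal and a point $x_1$ at which $|\phi(x_1,b) - \phi(x_1,c)|$ exceeds that value by at least $\epsilon$. Composing $\phi$ with a suitable nondecreasing unary connective (which keeps it a nonnegative Palyutin formula) I normalise so that $\phi$ is near $0$ at the ``common'' point and separated by a definite margin at the ``discrepant'' point. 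The key move is then to amplify this single two-point discrepancy into a full independence pattern inside a reduced power: indexing by $\omega$ and toggling coordinate by coordinate between the $b$-behaviour and the $c$-behaviour, I build, for each subset $S \subseteq \omega$, a parameter making $\phi$ large exactly on the coordinates in $S$. Here I crucially use that $\phi$, being Palyutin, is both preserved and copreserved under reduced products, so its value in the reduced power is controlled by the $\limsup$ along the filter of its values in the factors; and I use that reduced powers by the Fréchet filter model $\SCP$, hence $T$, so the configuration lives in a model of $T$. This realises every finite Boolean pattern with a fixed margin and witnesses the independence property for $\phi$.

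For the return implication (ii) $\Rightarrow$ (i) I argue contrapositively: if $T$ is unstable, some formula has the order property, and I must locate this instability on a Palyutin formula to contradict (ii). That a Palyutin formula $\phi$ with the order property already fails (ii) is the easy half: taking $y, z$ to be two elements $b_j, b_{j'}$ of an order-indiscernible witnessing sequence, the lower threshold bounds $\inf_x \max(\phi(x,b_j),\phi(x,b_{j'}))$ from above while the gap between the two thresholds bounds $\sup_x |\phi(x,b_j) - \phi(x,b_{j'})|$ from below, and after normalising the thresholds by a nondecreasing connective the inequality in (ii) breaks. The substantive point is that in a theory satisfying $\SCP$ the order property can always be reflected onto a Palyutin formula; for this I would invoke a reflection principle built from the structural results of the previous sections relating arbitrary formulas to Palyutin (equivalently Horn) formulas modulo such a complete $T$.

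I expect the main obstacle to be the amplification in (iii) $\Rightarrow$ (ii): unlike the ultraproduct case there is no Łoś theorem for reduced products, so the value of $\phi$ in the reduced power must be computed through exactly the two-sided (preservation and copreservation) control available for Palyutin formulas, and one must verify that the $\limsup$ along the filter separates the ``large'' and ``small'' coordinates uniformly enough to realise every Boolean pattern with a margin bounded away from $0$. The secondary difficulty is the reflection step in (ii) $\Rightarrow$ (i), where passing from the order property of an arbitrary formula to a Palyutin formula requires the full strength of the $\SCP$ machinery rather than the single direction (Palyutin implies Horn) recorded earlier.
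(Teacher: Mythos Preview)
Your cycle and your handling of (ii) $\Rightarrow$ (i) match the paper's. The ``reflection principle'' you need is precisely the approximation lemma proved earlier in the paper: modulo $\SCP$ every formula is uniformly approximable by $\mathcal B$-combinations of Palyutin formulas, and since continuous combinations of stable formulas are stable, instability of any formula forces some Palyutin constituent to be unstable. From there the paper does exactly what you sketch, replacing $\phi$ by $\max(\phi-r,0)$ and reading off the failure of (ii) from four terms $a_i,b_j,b_l,a_k$ of an order-property sequence.

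For (iii) $\Rightarrow$ (ii) you have the right amplification idea but the wrong filter, and this is exactly the obstacle you flag without resolving. In $M^{\frechet}$ the value of a bipreserved formula is the $\limsup$ along cofinite sets, which discards any finite set of coordinates; so if you ``toggle coordinate by coordinate'' the single distinguished coordinate carrying the large value of $\phi$ contributes nothing and the pattern collapses. The paper works instead in the \emph{cartesian} power $M^\omega$ (the reduced product over the trivial filter $\{\omega\}$): since $T$ is preserved under reduced products it is in particular preserved under direct products, so $M^\omega \models T$ with no detour through $\SCP$. There the value of a Palyutin formula is the plain supremum over coordinates, and a single coordinate does control it. Concretely, from witnesses $u,v,a,b \in M$ with $\phi(u,b) > \max(\phi(u,a),\phi(v,a),\phi(v,b)) + \delta$, the paper sets $c_k \in M^\omega$ equal to $u$ at coordinate $k$ and $v$ elsewhere, introduces $\psi(x,y,z)=\max(\phi(x,y),\phi(x,z))$, and chooses $d_S$ so that $\psi^{M^\omega}(c_k,d_S)$ jumps by at least $\delta$ according to whether $k\in S$. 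Your Fr\'echet version can be salvaged by replacing single coordinates with a partition of $\omega$ into infinitely many infinite blocks, but the trivial filter makes this unnecessary.
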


Finally, in \ref{sec:preservation theorems} we prove preservation theorems for Palyutin formulas,  Palyutin theories, as well as other related fragments. In particular, we prove that the sentences $\phi$ such that $\phi^{\prod_i A_i / \filter} = \limsup_{i \to \filter} \phi^{A_i}$ holds for every reduced product $\prod_i A_i / \filter$ are exactly those that can be approximated by Palyutin formulas.

The introduction of Palyutin formulas in continuous logic was part of the author's Master's
thesis \cite{fronteauProduitsReduitsLogique2023}. I would like to address all my thanks to Tomás Ibarlucía, who supervised it and helped me writing this paper.

\section*{Notations}

We recall here some of the standard notations that will be used throughout this article, but in \ref{sec:classical case} they will be understood in the sense of classical first-order logic whereas in the other sections, they need to be understood in the sense of continuous first-order logic.

The letters $M,N$ and their variants $M',N',M_i,N_i,\dots$ will be used to denote structures or metric structures and by abuse of notation, their underlying domains. The letters $\filter, \filterA$ will be used to denote filters, which will always be proper for our purposes. Sometimes we will use the letter $\ultrafilter$ in the case of ultrafilters. The Fréchet filter on $\omega$, i.e.\ the one consisting of cofinite subsets of $\omega$, will be denoted by $\frechet$. The letter $\signature$ will denote classical signatures or metric ones. If $M,N$ are two $\signature$-structures, then we write $M \equiv N$ if they have the same theory. If we are given an embedding $M \subseteq N$, then we write $M \preceq N$ if it is elementary.

\tableofcontents

\section{The classical case}
\label{sec:classical case}

In this section we fix a signature $\signature$. Let $(M_i)_{i \in I}$ be a family of $\signature$-structures and $\filter$ a filter on $I$. Consider the relation $\sim_\filter$ on $\prod_{i \in I} M_i$ defined by $a \sim_\filter b$ if $a_i = b_i$ for $\filter$-many $i$'s. We now define an $\signature$-structure $M_\filter$ on $\prod_i M_i / \sim_\filter$ by
\begin{align*}
R^{M_\filter}( (a_i)_{i \in I} / \sim_\filter) & \text{ holds if $M_i \models \phi(a_i)$ for $\filter$-many $i$'s} \\
f^{M_\filter}( (a_i)_{i \in I} / \sim_\filter) & = (f^{M_i}(a_i))_{i \in I} / \sim_\filter ~.
\end{align*}

$M_\filter$ is called \emph{the reduced product of $M$ by $\filter$}, and can also be denoted by $\prod_i M_i / \filter$. If $M_i = N$ for every $i \in I$ for some $\signature$ structure $N$, then $M_\filter$ is \emph{the reduced power of $N$ by $\filter$} and is denoted by $N^\filter$. We will not go further in details concerning the basic definitions and properties of classical reduced products, but note that Chang and Keisler \cite{changModelTheoryThird2013} dedicated Section 6.2-3 of their book to this notion.

\begin{definition}
We say that a theory $T$ is 
\begin{itemize}
	\item \emph{preserved under reduced products} if for every family $(M_i)_{i \in I}$ of $\signature$-structures and filter $\filter$ on $I$ such that $M_i \models T$ for $\filter$-many $i$'s, we have $\prod_i M_i / \filter \models T$ ;
	\item \emph{copreserved under reduced products} if for every family $(M_i)_{i \in I}$ of $\signature$-structures and filter $\filter$ on $I$ such that $\prod_i M_i / \filter \models T$, we have $M_i \models T$ for $\filter$-many $i$'s ;
	\item \emph{preserved under reduced roots} if for every $\signature$-structure $M$ and filter $\filter$ such that $M^\filter \models T$, we have $M \models T$ ;
	\item \emph{preserved under (finite) cartesian factors} if for every (finite) family $(M_i)_{i \in I}$ of $\signature$-structures such that $\prod_i M_i \models T$, we have $M_i \models T$ for every $i$.
\end{itemize}
When we say that a formula $\phi(x)$ is preserved (resp.\ copreserved, bipreserved) under reduced products, we mean that the $\signature(x)$-theory $\{\phi(x)\}$ has this property.
\end{definition}

A first class of formulas associated to reduced products is the set of \emph{Horn formulas} which is the smallest fragment of first-order logic such that
\begin{itemize}
	\item Every formula of the form $\beta_1 \land \dots \land \beta_n \rightarrow \alpha_n$ where $\alpha, \beta_1, \dots, \beta_n$ are atomic formulas is a Horn formula (such a formula is called a \emph{Horn clause}) ;
	\item The set of Horn formulas is closed under conjunction and both quantifiers $\forall$ and $\exists$.
\end{itemize}

It is known that a first-order theory is preserved under reduced products if and only if it is axiomatisable by Horn sentences. The original proof is due to Keisler assuming CH \cite{keislerReducedProductsHorn1965}, and then Galvin who used absoluteness arguments in general case \cite{galvinHornSentences1970}. A direct proof can be found in the author's master thesis \cite{fronteauProduitsReduitsLogique2023}.
\begin{theorem}
	Let $T$ be an $\signature$-theory and $M$ an $\signature$-structure. Let $T_{\Horn}$ be the set of Horn sentences $\phi$ such that $T \models \phi$. The following are equivalent :
	\begin{enumerate}[(i)]
		\item $M \models T_\Horn$.
		\item $M^\ultrafilter \simeq N_\filter$ for some ultrafilter $\ultrafilter$, filter $\filter$ and family $N$ of models of $T$.
	\end{enumerate}
\end{theorem}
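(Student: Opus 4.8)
The plan is to treat the two implications separately, the implication (ii) $\Rightarrow$ (i) being immediate and (i) $\Rightarrow$ (ii) carrying all the content. For (ii) $\Rightarrow$ (i), suppose $M^\ultrafilter \simeq N_\filter$ with every $N_i \models T$. Since $T \models T_\Horn$, each $N_i$ is a model of $T_\Horn$, so by preservation of Horn sentences under reduced products $N_\filter \models T_\Horn$, hence $M^\ultrafilter \models T_\Horn$. The diagonal embedding $M \preceq M^\ultrafilter$ is elementary, so $M \equiv M^\ultrafilter$, and as $T_\Horn$ is a set of sentences we conclude $M \models T_\Horn$.

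For (i) $\Rightarrow$ (ii), I would introduce the class $\mathcal E$ of all $\signature$-structures elementarily equivalent to some reduced product of models of $T$, and first observe that (ii) is equivalent to $M \in \mathcal E$. Indeed, if $M^\ultrafilter \simeq N_\filter$ then $M \equiv M^\ultrafilter \simeq N_\filter$, so $M \in \mathcal E$; conversely, if $M \equiv N_\filter$, the Keisler--Shelah theorem yields an ultrafilter $\ultrafilter$ with $M^\ultrafilter \simeq (N_\filter)^\ultrafilter$, and an ultrapower of a reduced product is again a reduced product of the same factors, since the iterated product $\prod_j (\prod_i N_i / \filter) / \ultrafilter$ is the reduced product of the $N_i$ by the tensor filter $\filter \otimes \ultrafilter$; thus $(N_\filter)^\ultrafilter$ is a reduced product of models of $T$ and (ii) holds. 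It therefore suffices to prove $\mathrm{Mod}(T_\Horn) = \mathcal E$.

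The inclusion $\mathcal E \subseteq \mathrm{Mod}(T_\Horn)$ is just (ii) $\Rightarrow$ (i) again. For the converse I would show that $\mathcal E$ is axiomatizable and then identify its Horn part. The crucial step is that $\mathcal E$ is closed under reduced products: if $M_k \equiv P_k$ with each $P_k$ a reduced product of models of $T$, then by the Feferman--Vaught theorem the complete theory of a reduced product depends only on the filter and on the sets $\{k : \text{factor} \models \theta\}$ for finitely many sentences $\theta$ determined by the target sentence alone; as $M_k \equiv P_k$ these sets agree, so $\prod_k M_k / \filterA \equiv \prod_k P_k / \filterA$, and the right-hand side is a reduced product of models of $T$ by the iteration lemma, giving $\prod_k M_k / \filterA \in \mathcal E$. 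Since $\mathcal E$ is trivially closed under elementary equivalence and, being closed under reduced products, under ultraproducts, the standard criterion that a class closed under ultraproducts and elementary equivalence is axiomatizable applies, so $\mathcal E = \mathrm{Mod}(\mathrm{Th}(\mathcal E))$ with $\mathrm{Th}(\mathcal E)$ preserved under reduced products. By the Keisler--Galvin theorem recalled above, $\mathrm{Th}(\mathcal E)$ is equivalent to its Horn part, i.e.\ $\mathcal E = \mathrm{Mod}(\mathrm{Th}_\Horn(\mathcal E))$. Finally, from $\mathrm{Mod}(T) \subseteq \mathcal E \subseteq \mathrm{Mod}(T_\Horn)$ the first inclusion gives $\mathrm{Th}_\Horn(\mathcal E) \subseteq T_\Horn$ and the second gives $T_\Horn \subseteq \mathrm{Th}_\Horn(\mathcal E)$, so $\mathrm{Th}_\Horn(\mathcal E) = T_\Horn$ and $\mathcal E = \mathrm{Mod}(T_\Horn)$, as required.

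The main obstacle is the closure of $\mathcal E$ under reduced products: everything rests on the Feferman--Vaught invariance, namely that replacing the factors of a reduced product by elementarily equivalent ones leaves its complete theory unchanged. Without invoking this, one is pushed into the more hands-on route of directly constructing, from a given $M \models T_\Horn$, a reduced product of models of $T$ that is elementarily equivalent to $M$ --- which is exactly the construction underlying the Keisler--Galvin theorem --- and I would fall back on that if a self-contained argument were preferred.
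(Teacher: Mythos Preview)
The paper does not give a proof of this theorem: it is stated as a known result, attributed to Keisler (under CH) and Galvin, with a pointer to the author's master's thesis for a direct argument. So there is no in-paper proof to compare against, and your write-up has to be judged on its own.

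Your argument is correct. The direction (ii)~$\Rightarrow$~(i) is immediate as you say. For (i)~$\Rightarrow$~(ii), the reformulation via the class $\mathcal E$ of structures elementarily equivalent to a reduced product of models of $T$ is clean, and each step checks out: Feferman--Vaught gives that replacing factors by elementarily equivalent ones preserves the theory of a reduced product; iteration of reduced products shows that $\prod_k P_k/\filterA$ is again a reduced product of models of $T$; closure under ultraproducts and elementary equivalence makes $\mathcal E$ elementary; and the identification $\mathrm{Th}_\Horn(\mathcal E)=T_\Horn$ from the two inclusions $\mathrm{Mod}(T)\subseteq\mathcal E\subseteq\mathrm{Mod}(T_\Horn)$ is correct.

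The only reservation---which you already flag---is that you invoke the Keisler--Galvin theorem (``a theory preserved under reduced products is Horn-axiomatisable'') as a black box, whereas the statement you are proving is precisely the form in which the paper packages that result. What you have written is therefore a genuine \emph{reduction} of the structural characterisation to the sentence-level preservation theorem, via Feferman--Vaught, Keisler--Shelah and iteration of reduced products, rather than an independent proof. If a self-contained argument is wanted, one does need the hands-on construction you mention at the end; that is presumably what the reference to a ``direct proof'' in the thesis is pointing to.
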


With a simple application of the Compactness Theorem, one deduces that a formula is equivalent to a Horn sentence if and only if it is preserved under reduced products. However, Horn formulas are in general not copreserved under reduced products. The Horn formula $\phi = \forall x_1 \forall x_2 \exists y \; (y \neq x_1 \lor y \neq x_2)$ is one example because the structure $M = \{1,2\}$ is not a model of $\phi$ but $M \times M \models \phi$. Palyutin isolated a fragment of first-order logic which does not have this pathology \cite{palyutinCategoricalHornClasses1980}.

\begin{definition}
The set of \emph{Palyutin formulas} is the smallest set of first-order formulas such that :
\begin{itemize}
	\item It contains atomic formulas ;
	\item It is closed under conjunction, existential quantification and universal quantification ;
	\item If $\phi, \psi$ are Palyutin formulas and $x$ is any variable, then $(\exists x \; \phi) \land \forall x \; (\phi \rightarrow \psi)$ is also a Palyutin formula.
\end{itemize}
\end{definition}

Note that the closure under both quantifications in is actually redundant with the third item in the last definition. An easy induction on this definition proves the following.

\begin{proposition}[\cite{palyutinCategoricalHornClasses1980}]
	Palyutin formulas are bipreserved under reduced products.
\end{proposition}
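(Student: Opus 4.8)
The plan is to induct on the construction of Palyutin formulas, establishing preservation and copreservation simultaneously. For a formula $\phi(x)$ and a tuple $a = (a_i)_i \in \prod_i M_i$, write $a/\filter$ for its class in $M_\filter \defeq \prod_i M_i / \filter$, so that preservation reads ``$\{i : M_i \models \phi(a_i)\} \in \filter$ implies $M_\filter \models \phi(a/\filter)$'' and copreservation is the converse implication. The base case of atomic formulas is exactly the content of the definition of $M_\filter$: for a relation symbol $R$ the set $\{i : M_i \models R(a_i)\}$ lies in $\filter$ iff $R^{M_\filter}(a/\filter)$ holds, and for an equality $t_1 = t_2$ of terms one uses $t^{M_\filter}(a/\filter) = (t^{M_i}(a_i))_i / \filter$ to reduce to the definition of $\sim_\filter$; either way both directions hold. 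The conjunction step is then handled using that $\filter$ is closed under finite intersections and supersets, and existential quantification by choosing a coordinatewise witness on a $\filter$-large set (for preservation) or reading a witness off the product and applying the inductive copreservation of $\phi$ (for copreservation). Universal quantification is analogous, with the only point of note being copreservation: if $\{i : M_i \models \forall x \, \phi(a_i)\} \notin \filter$, I would pick for each bad coordinate a $b_i$ with $M_i \not\models \phi(a_i, b_i)$, and observe that inductive preservation of $\phi$ forces $M_\filter \not\models \phi(a/\filter, b/\filter)$, hence $M_\filter \not\models \forall x \, \phi$.

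The substantive case is the h-operation $\theta \defeq (\exists x \, \phi) \land \forall x \, (\phi \to \psi)$; the key difficulty is that $\phi \to \psi$ is itself neither Palyutin nor even Horn, so the inductive hypothesis cannot be applied to it directly. For preservation, suppose $\theta$ holds on a $\filter$-large set $S$. The existential conjunct is handled as above. For the universal conjunct, take any $c/\filter$ with $M_\filter \models \phi(a/\filter, c/\filter)$; by copreservation of $\phi$ the set $P \defeq \{i : M_i \models \phi(a_i, c_i)\}$ is $\filter$-large, and on $P \cap S \in \filter$ the hypothesis forces $M_i \models \psi(a_i, c_i)$, so preservation of $\psi$ gives $M_\filter \models \psi(a/\filter, c/\filter)$. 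Thus $M_\filter \models \forall x \, (\phi \to \psi)(a/\filter)$ as required.

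The hard part, and the step I expect to be the main obstacle, is copreservation of $\theta$, which is precisely where the existential conjunct earns its place. Assume $M_\filter \models \theta(a/\filter)$ and suppose for contradiction that $U \defeq \{i : M_i \models \forall x \, (\phi \to \psi)(a_i)\} \notin \filter$. The naive move of choosing a ``bad'' witness $b_i$ (satisfying $\phi \land \neg\psi$) on $I \setminus U$ breaks down, because $\filter$ is only a filter and not an ultrafilter: $I \setminus U$ need not be $\filter$-large, so one cannot guarantee that $\phi$ holds often enough to pass to the product. The fix is to splice the two witnesses together. From the existential conjunct, copreservation of $\phi$ yields a tuple $c$ with $E \defeq \{i : M_i \models \phi(a_i, c_i)\} \in \filter$; I then set $b_i$ to be a bad witness for $i \in I \setminus U$ and $b_i \defeq c_i$ otherwise. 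By construction $\{i : M_i \models \phi(a_i, b_i)\} \supseteq E \in \filter$, so preservation of $\phi$ gives $M_\filter \models \phi(a/\filter, b/\filter)$, while $\{i : M_i \models \psi(a_i, b_i)\} \subseteq U \notin \filter$, so copreservation of $\psi$ gives $M_\filter \not\models \psi(a/\filter, b/\filter)$. This contradicts $M_\filter \models \forall x \, (\phi \to \psi)$, whence $U \in \filter$; intersecting with $E$ (which witnesses the existential conjunct coordinatewise) shows $M_i \models \theta(a_i)$ for $\filter$-many $i$, completing the induction.
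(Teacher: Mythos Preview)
Your proof is correct and matches the paper's approach: the paper merely asserts that the result follows by an easy induction, and the detailed continuous analogue proved later in the paper uses precisely the same splicing trick for the h-operation case that you describe. One minor slip: in the copreservation step for $\forall x\,\phi$ you invoke ``preservation of $\phi$'' where you need copreservation (you want $\{i : M_i \models \phi(a_i,b_i)\} \notin \filter$ to imply $M_\filter \not\models \phi(a/\filter,b/\filter)$), but the intended argument is clearly correct.
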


As a consequence of Keisler-Galvin's characterisation of Horn formulas, every Palyutin formula is equivalent to a Horn formula. However, this argument is not really constructive so we provide a direct syntactic proof of this fact.

\begin{lemma}\label{lemma:simpl phi 2 psi Horn}
Let $\phi$ be a formula. Assume that for every Horn clause $\gamma$, $\phi \rightarrow \gamma$ is equivalent to a Horn formula. Then for all Horn formula $\psi$, $\phi \rightarrow \psi$ is equivalent to a Horn formula.
\end{lemma}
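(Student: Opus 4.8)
The plan is to induct on the construction of the Horn formula $\psi$, following the inductive definition of Horn formulas. The base case is exactly the hypothesis: when $\psi = \gamma$ is a Horn clause, $\phi \rightarrow \gamma$ is equivalent to a Horn formula by assumption. Note that the assumption on $\phi$ stays fixed throughout the induction, so it remains available at the base case for every Horn clause that arises.

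For the inductive steps I would use three logical equivalences that push the implication $\phi \rightarrow (-)$ inward past the connectives and quantifiers, together with the closure of Horn formulas under conjunction and under both quantifiers. If $\psi = \psi_1 \land \psi_2$, then $\phi \rightarrow (\psi_1 \land \psi_2)$ is equivalent to $(\phi \rightarrow \psi_1) \land (\phi \rightarrow \psi_2)$; the induction hypothesis makes each conjunct equivalent to a Horn formula, and their conjunction is Horn. For the quantifier cases, I would first rename the bound variable so that $x$ does not occur in $\phi$, which does not change $\psi$ up to equivalence. Then $\phi \rightarrow \forall x \, \psi_0 \equiv \forall x \, (\phi \rightarrow \psi_0)$ and $\phi \rightarrow \exists x \, \psi_0 \equiv \exists x \, (\phi \rightarrow \psi_0)$, and since by the induction hypothesis $\phi \rightarrow \psi_0$ is equivalent to some Horn formula $\chi$, the formulas $\forall x \, \chi$ and $\exists x \, \chi$ are Horn.

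The step I expect to demand the most care is the existential case. Here one might worry that the implication fails to commute with the existential quantifier, since in general $\phi \rightarrow \exists x \, \psi_0$ is strictly weaker than $\exists x \, (\phi \rightarrow \psi_0)$. The renaming is precisely what rescues the argument: once $x$ is not free in $\phi$, one computes $\exists x \, (\phi \rightarrow \psi_0) \equiv \exists x \, (\neg \phi \lor \psi_0) \equiv \neg \phi \lor \exists x \, \psi_0 \equiv \phi \rightarrow \exists x \, \psi_0$, restoring the desired equivalence. This is the only point where the condition on the free variables is used in an essential way, and it is what allows the induction to close.
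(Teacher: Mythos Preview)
Your argument is correct. The paper organizes the same computation a little differently: instead of inducting on the structure of $\psi$, it first puts $\psi$ into prenex normal form $Q_1 y_1 \cdots Q_n y_n \bigwedge_i \gamma_i$ with the $y_j$ chosen not to occur free in $\phi$, and then rewrites $\phi \to \psi$ in one step as $Q_1 y_1 \cdots Q_n y_n \bigwedge_i (\phi \to \gamma_i)$, applying the hypothesis to each clause $\gamma_i$ and closing under $\wedge,\exists,\forall$. Both routes rest on exactly the same logical equivalences (distributing $\phi \to (-)$ over conjunction and over each quantifier after renaming bound variables); the prenex form simply collapses your three inductive cases into a single pass, while your version avoids the need to observe that Horn formulas admit a prenex normal form. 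A minor quibble: your aside that $\phi \to \exists x\,\psi_0$ is ``strictly weaker'' than $\exists x\,(\phi \to \psi_0)$ is not quite right in classical logic with the renaming in place (they are equivalent), but the computation you give afterwards is fine and is all that is needed.
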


\begin{proof}
Take $\psi$ a Horn formula. We may assume $\psi$ is in prenex form $Q_1 y_1 \dots Q_n y_n \; \bigwedge_{i = 1}^n \gamma_i$, where $Q_i$ is either $\exists$ or $\forall$, $y_1,\dots,y_n$ are distinct and do not appear free in $\phi$, and the $\gamma_i$'s are Horn clauses. $\phi \rightarrow \psi$ is then equivalent to :
\[
	Q_1 y_1 \dots Q_n y_n \; \bigwedge_{i = 1}^n \phi \rightarrow \gamma_i ~.
\]
By assumption, $\phi \rightarrow \gamma_i$ is equivalent to a Horn formula for each $i$, we can then conclude by closure of Horn formulas by $\wedge, \exists$ and $\forall$.
\end{proof}

\begin{theorem}
Let $\phi$ be a Palyutin formula. Then :
\begin{enumerate}[(a)]
	\item $\phi$ is equivalent to a Horn formula
	\item For every Horn formula $\psi$, $\phi \rightarrow \psi$ is equivalent to a Horn formula
\end{enumerate}
\end{theorem}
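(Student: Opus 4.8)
The plan is to prove (a) and (b) simultaneously by induction on the construction of the Palyutin formula $\phi$, since the two statements feed into one another: establishing (a) in the h\nobreakdash-operation case will require knowing (b) for the immediate subformulas. To prove (b) at each step I will invoke \ref{lemma:simpl phi 2 psi Horn} to reduce the task to showing that $\phi \to \gamma$ is equivalent to a Horn formula for every Horn \emph{clause} $\gamma$, the full statement then following; in establishing this clause version I will use the full induction hypothesis (b) for the subformulas. I may freely rename bound variables so that the quantified variables are fresh and do not occur in $\gamma$.

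The base case and the closure cases are routine. If $\phi$ is atomic it is itself a Horn clause (the case $n = 0$), and $\phi \to (\delta \to \alpha)$ is equivalent to the Horn clause $(\phi \land \delta) \to \alpha$. If $\phi = \phi_1 \land \phi_2$, part (a) is immediate from closure of Horn formulas under $\land$, and for (b) one writes $\phi \to \gamma \equiv \phi_1 \to (\phi_2 \to \gamma)$ and applies the induction hypothesis (b) twice. If $\phi = \exists x\, \phi_1$ (resp.\ $\phi = \forall x\, \phi_1$), part (a) uses closure under $\exists$ (resp.\ $\forall$), and for (b) one uses $\phi \to \gamma \equiv \forall x(\phi_1 \to \gamma)$ (resp.\ $\exists x(\phi_1 \to \gamma)$) together with the induction hypothesis (b) and closure under the dual quantifier.

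The only substantial case is the h\nobreakdash-operation $\phi = (\exists x\, \phi_1) \land \forall x\,(\phi_1 \to \phi_2)$. For (a), the conjunct $\exists x\, \phi_1$ is Horn by the induction hypothesis (a) and closure under $\exists$, while $\forall x\,(\phi_1 \to \phi_2)$ is Horn because $\phi_1 \to \phi_2$ is Horn by the induction hypothesis (b) applied to $\phi_1$ (with $\psi$ the Horn formula equivalent to $\phi_2$), followed by closure under $\forall$. The delicate point is (b), and I expect this to be the main obstacle. A naive Boolean expansion of $\phi \to \gamma$ produces a subformula equivalent to $\phi_1(y) \lor \gamma$, in which $\phi_1$ occurs positively; this is genuinely not Horn and cannot be repaired in isolation (one checks that $\forall x(\phi_1 \to \phi_2) \to \gamma$ alone is in general not preserved under reduced products, hence not equivalent to a Horn formula), so the two conjuncts of $\phi$ must be handled together.

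The plan is to guess the explicit equivalent
\[
	\phi \to \gamma \;\equiv\; \forall x\, \exists y\, \Big[ \big(\phi_1(x) \to \phi_1(y)\big) \land \big(\phi_1(x) \to (\phi_2(y) \to \gamma)\big) \Big]
\]
and to verify it by computing that both sides are false in exactly the same situation: negating the right-hand side and prenexing yields $\exists x\, \phi_1(x) \land \forall y\,(\phi_1(y) \to (\phi_2(y) \land \neg\gamma))$, and since $\gamma$ has no free occurrence of $y$, the existential witness for $\phi_1$ forces $\neg\gamma$, leaving exactly $\phi \land \neg\gamma$. It then remains to see that the right-hand side is Horn, and here the reformulation does its work: the conjunct $\phi_1(x) \to \phi_1(y)$ is Horn by the induction hypothesis (b) for $\phi_1$ applied to the Horn formula $\phi_1(y)$, the conjunct $\phi_1(x) \to (\phi_2(y) \to \gamma)$ is Horn by the induction hypothesis (b) for $\phi_2$ (making $\phi_2(y) \to \gamma$ Horn) followed by (b) for $\phi_1$, and Horn formulas are closed under $\land$, $\exists$ and $\forall$. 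The point of the identity is precisely that the offending $\phi_1(y) \lor \gamma$ has been traded for the Horn formula $\phi_1(x) \to \phi_1(y)$, the discrepancy being absorbed by the existential quantifier on $y$.
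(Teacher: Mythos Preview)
Your proof is correct and follows essentially the same route as the paper's. The paper arrives at the unprenexed form $\forall x\,[\phi_1 \to \exists x\,(\phi_1 \land (\phi_2 \to \gamma))]$ via a direct chain of Boolean equivalences (rather than by negating both sides and comparing), and it subsumes the separate quantifier cases into the h\nobreakdash-operation case, but the key rewriting and the use of the induction hypotheses are identical to yours.
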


\begin{proof}
We prove both statements simultaneously by induction on $\phi$. First note that by \cref{lemma:simpl phi 2 psi Horn}, we only need to prove (b) for Horn clauses.

\begin{description}
	\item[Atomic formulas] If $\phi$ is an atomic formula, $\phi$ is Horn by definition and for every Horn clause $\gamma$, the implication $\phi \rightarrow \gamma \equiv \neg \phi \vee \gamma$ is clearly a Horn clause.
	
	\item[Conjunction] Assume (a) and (b) hold for two Palyutin formulas $\phi$ and $\psi$. Clearly, (a) holds for $\phi \wedge \psi$. Moreover, for every Horn clause $\gamma$ :
	\begin{align*}
		(\phi \wedge \psi) \rightarrow \gamma & \equiv \phi \rightarrow (\psi \rightarrow \gamma)
	\end{align*}
	By inductive hypothesis for $\psi$, $\psi \rightarrow \gamma$ is equivalent to a Horn formula $\theta$. Using inductive hypothesis for $\phi$, $\phi \rightarrow \theta$ and thus $(\phi \wedge \psi) \rightarrow \gamma$ are equivalent to a Horn formula.
	
	\item[H-operation] Assume (a) and (b) hold for two Palyutin formulas $\phi$ and $\psi$. Thus, $\psi$ is equivalent to a Horn formula but then $\phi \rightarrow \psi$ is also equivalent to a Horn formula, so the formula
	\[\theta \defeq (\exists x \; \phi) \wedge (\forall x \; (\phi \rightarrow \psi)) \]
	is equivalent to a Horn formula. Now, take $\gamma$ a Horn clause. Without loss of generality, we may assume that $x$ does not appear free in $\gamma$. One can rewrite $\theta \to \gamma$ as
	\begin{align*}
		\theta \rightarrow \gamma & \equiv (\forall x \; \neg \phi) \vee (\exists x \; (\phi \wedge \neg \psi)) \vee \gamma \\
		& \equiv (\forall x \; \neg \phi) \vee (\exists x \; (\phi \wedge \neg \psi)) \vee (\exists x \; \phi \wedge \gamma) \\
		& \equiv (\forall x \; \neg \phi) \vee \exists x \; (\phi \wedge (\neg \psi \vee \gamma)) \\
		& \equiv (\exists x \; \phi) \rightarrow \exists x \; (\phi \wedge (\psi \rightarrow \gamma)) \\
		& \equiv \forall x \; [\phi \rightarrow \exists x \; (\phi \wedge (\psi \rightarrow \gamma))] ~.
	\end{align*}
	By inductive hypothesis, $\psi \rightarrow \gamma$ and $\phi$ are equivalent to Horn formulas. Thus, the formula
	\[\exists x \; (\phi \wedge (\psi \rightarrow \gamma))\]
	is equivalent to a Horn formula. Finally, by inductive hypothesis and closure of Horn formulas by $\forall$, we conclude that $\theta \rightarrow \gamma$ is equivalent to a Horn formula. \qedhere
\end{description}
\end{proof}

\begin{definition}
	Define the theory SCP to be the set of sentences of the form
	\[\forall y_1 \dots \forall y_m \; \left( \forall x_1 \dots \forall x_l \; \left[ \phi \to \bigvee_{k = 1}^n \psi_k \right] \right) \to \bigvee_{k = 1}^n  \forall x_1 \dots \forall x_l \; \left( \phi \to \psi_k \right) ~,\]
	where $n \geq 1$ and $\phi, \psi_1,\dots, \psi_n$ are Palyutin formulas.
\end{definition}

Proofs of the following facts can be found in \cite{palmgrenDirectProofThat1994}.

\begin{lemma}\label{lemma:pres SCP}
	The theory $\SCP$ is preserved under reduced products.
\end{lemma}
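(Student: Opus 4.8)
The plan is to verify a single axiom $\sigma \in \SCP$, say
\[
\sigma = \forall \bar y \left( \forall \bar x \left[ \phi \to \textstyle\bigvee_{k=1}^n \psi_k \right] \to \textstyle\bigvee_{k=1}^n \forall \bar x \, (\phi \to \psi_k) \right),
\]
in an arbitrary reduced product $M_\filter = \prod_i M_i / \filter$ with $P \defeq \{i : M_i \models \SCP\} \in \filter$. Abbreviate $\Phi(\bar y) \defeq \forall \bar x \, (\phi \to \bigvee_k \psi_k)$ and $\Psi_k(\bar y) \defeq \forall \bar x \, (\phi \to \psi_k)$. Fixing a tuple $\bar b = (\bar b_i)_i / \filter$ and assuming $M_\filter \models \Phi(\bar b)$, I would suppose towards a contradiction that $M_\filter \not\models \Psi_k(\bar b)$ for every $k$, and aim to produce a single $\phi$-point of $M_\filter$ falsifying every $\psi_k$, contradicting the antecedent $M_\filter \models \Phi(\bar b)$.

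First I would translate everything into statements about index sets. Each $\Psi_k$ is equivalent to a Horn formula (by the previous theorem, since $\psi_k$ is Palyutin and hence $\phi \to \psi_k$ is Horn), so it is preserved under reduced products; thus $M_\filter \not\models \Psi_k(\bar b)$ forces $P_k \defeq \{i : M_i \models \Psi_k(\bar b_i)\} \notin \filter$. On the other hand $\exists \bar x \, \phi$ is a Palyutin formula, hence bipreserved. Were $M_\filter \not\models \exists \bar x \, \phi(\bar x, \bar b)$, every $\Psi_k$ would hold vacuously in $M_\filter$, against our supposition; so $M_\filter \models \exists \bar x \, \phi(\bar x, \bar b)$, and copreservation gives $A \defeq \{i : M_i \models \exists \bar x \, \phi(\bar x, \bar b_i)\} \in \filter$. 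Set $B \defeq A \cap P \in \filter$.

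The heart of the argument is the choice of witness at each $i \in B$. Let $K_i \defeq \{k : i \notin P_k\}$ be the set of $k$ with $M_i \not\models \Psi_k(\bar b_i)$. When $K_i \neq \emptyset$, I would apply the instance of $\SCP$ in $M_i$ associated to the subfamily $(\psi_k)_{k \in K_i}$: its consequent $\bigvee_{k \in K_i} \Psi_k(\bar b_i)$ fails by definition of $K_i$, so its antecedent fails as well, yielding a single $\bar a_i$ with $M_i \models \phi(\bar a_i, \bar b_i)$ and $M_i \models \neg \psi_k(\bar a_i, \bar b_i)$ for all $k \in K_i$ (when $K_i = \emptyset$ any $\phi$-point works, one existing since $i \in A$). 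Thus at each $i \in B$ the chosen $\phi$-point satisfies $\psi_k$ only if $i \in P_k$. Letting $\bar a \defeq (\bar a_i)_i / \filter$ (chosen arbitrarily off $B$), I would then check two things: since $B \subseteq \{i : M_i \models \phi(\bar a_i, \bar b_i)\}$ and $\phi$ is preserved, $M_\filter \models \phi(\bar a, \bar b)$; and for each $k$ we have $\{i : M_i \models \psi_k(\bar a_i, \bar b_i)\} \cap B \subseteq P_k$, so if the left-hand set lay in $\filter$ then so would $P_k$. Hence $\{i : M_i \models \psi_k(\bar a_i, \bar b_i)\} \notin \filter$, and copreservation of the Palyutin formula $\psi_k$ gives $M_\filter \models \neg \psi_k(\bar a, \bar b)$. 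This delivers the desired $\phi$-point falsifying every $\psi_k$, the sought contradiction.

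The main obstacle is exactly the gap between a filter and its complements: knowing $P_k \notin \filter$ for each $k$ gives no control over $\bigcap_k (I \setminus P_k)$, so one cannot naively glue the $n$ separate witnesses arising from the failures of the $\Psi_k$ into a single global one. The device that circumvents this is the factorwise simple cover property, which upgrades, at one index at a time, the several witnesses indexed by $K_i$ into a simultaneous witness; paired with the $\filter$-large $\phi$-support $A$ coming from bipreservation of $\exists \bar x \, \phi$, this is precisely what makes the index-set bookkeeping close up.
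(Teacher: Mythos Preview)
Your argument is correct. The paper does not give its own proof of this classical lemma---it simply cites Palmgren---but your proof is essentially the classical counterpart of the paper's detailed proof of the continuous analogue (\cref{lemma:pres SCP cont}): both pick, at each factor $M_i$, a single simultaneous witness via the factorwise $\SCP$ applied to the subfamily $K_i$ (the paper's $J(i)$), then use bipreservation of the Palyutin formulas $\phi$ and $\psi_k$ to push this witness up to $M_\filter$. The one mild difference is that you obtain $P_k \notin \filter$ by invoking the Horn shape of $\Psi_k$ (via the earlier theorem that $\phi \to \psi$ is Horn when $\phi$ is Palyutin and $\psi$ Horn), whereas the paper's continuous proof argues the corresponding step directly from bipreservation; both routes are valid and yield the same conclusion.
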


\begin{lemma}\label{lemma:atomless boolean algebra SCP}
	Let $(M_i)_{i \in I}$ be a family of $\signature$-structures and $\filter$ a filter on $I$. If $\{0,1\}^{\filter}$ is an atomless boolean algebra, then $M_\filter \models \SCP$. In particular, $M_{\frechet} \models \SCP$ for every sequence $(M_n)_{n < \omega}$ of $\signature$-structures.
\end{lemma}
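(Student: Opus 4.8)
The plan is to verify each axiom of $\SCP$ directly by a gluing argument, using bipreservation of Palyutin formulas (the preceding Proposition) to push the problem into the Boolean algebra $\{0,1\}^\filter = \mathcal P(I)/\filter$ and then exploiting atomlessness to glue. Throughout, for a Palyutin formula $\chi(x,y)$ and tuples $a = (a_i)/\!\sim_\filter$, $b = (b_i)/\!\sim_\filter$ in $M_\filter$, I write $\|\chi(a,b)\|$ for the class of $\{i : M_i \models \chi(a_i,b_i)\}$ in $\{0,1\}^\filter$; bipreservation says exactly that $M_\filter \models \chi(a,b)$ if and only if $\|\chi(a,b)\| = 1$.

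Fix Palyutin formulas $\phi, \psi_1, \dots, \psi_n$, interpret the parameters $y$ by some $b$, and suppose towards a contradiction that $M_\filter \models \forall x\,(\phi \to \bigvee_k \psi_k)$ while each $\forall x\,(\phi \to \psi_k)$ fails. Then for every $k$ I obtain a tuple $a^{(k)}$ with $\|\phi(a^{(k)},b)\| = 1$ and $\|\psi_k(a^{(k)},b)\| = r_k \neq 1$; set $\bar r_k \defeq 1 - r_k \neq 0$. The aim is to glue the $a^{(k)}$ into a single tuple $a^\ast$ satisfying $\phi$ but none of the $\psi_k$. Given a partition of unity $s_1, \dots, s_n$ of $\{0,1\}^\filter$ realised by pairwise disjoint $S_k$ with $\bigcup_k S_k \in \filter$, I set $a^\ast_i \defeq a^{(k)}_i$ for $i \in S_k$ (arbitrarily off the union). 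Since $\|\phi(a^{(k)},b)\| = 1$ for every $k$, a short computation gives $\|\phi(a^\ast,b)\| = \bigvee_k s_k = 1$, so $\phi(a^\ast,b)$ holds; and for each $k$ one gets $\|\psi_k(a^\ast,b)\| = \bigvee_j \bigl(s_j \wedge \|\psi_k(a^{(j)},b)\|\bigr)$, whose meet with $s_k \wedge \bar r_k$ is $0$ (the $j \neq k$ terms die because $s_k \leq \overline{s_j}$, and the $j=k$ term because $\bar r_k \wedge r_k = 0$). Hence $s_k \wedge \bar r_k \neq 0$ forces $\|\psi_k(a^\ast,b)\| \neq 1$, i.e.\ $\psi_k(a^\ast,b)$ fails for every $k$, contradicting the hypothesis.

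The main point, and the only place atomlessness enters, is therefore the purely Boolean claim: in an atomless Boolean algebra, given nonzero $\bar r_1, \dots, \bar r_n$, there is a partition of unity $s_1, \dots, s_n$ with $s_k \wedge \bar r_k \neq 0$ for each $k$. I expect this to be the technical heart, and I would prove it by first producing pairwise disjoint nonzero representatives $u_k \leq \bar r_k$. Passing to the finite subalgebra generated by $\bar r_1, \dots, \bar r_n$, each $\bar r_k$ lies above at least one atom of this subalgebra; splitting every such atom into $n$ pairwise disjoint nonzero pieces in $\{0,1\}^\filter$ (possible precisely because it is atomless) and handing distinct pieces to distinct indices sharing that atom yields the desired disjoint $u_k \leq \bar r_k$. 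Then $s_k \defeq u_k$ for $k < n$ and $s_n \defeq 1 - \bigvee_{k<n} u_k$ is a partition of unity with $s_k \wedge \bar r_k \supseteq u_k \neq 0$ (using $u_n \leq s_n$). The remainder is bookkeeping with the translation $\|\cdot\|$ and the elementary identities above; note that the hypothesis $M_\filter \models \forall x\,(\phi \to \bigvee_k \psi_k)$ is genuinely used, only at the final contradiction.

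For the ``in particular'' clause I would simply observe that $\{0,1\}^{\frechet} = \mathcal P(\omega)/\mathrm{fin}$ is atomless, since any infinite subset of $\omega$ splits into two infinite subsets, so its nonzero elements are never minimal; the general statement then applies verbatim.
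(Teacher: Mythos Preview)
Your proof is correct. The paper does not actually give its own proof of this classical lemma; it defers to \cite{palmgrenDirectProofThat1994}. That said, the paper \emph{does} prove the continuous analogue (Lemma~\ref{lemma:atomless boolean algebra SCP cont}), and your argument is essentially the classical specialisation of that proof: the key Boolean ingredient you isolate---given nonzero elements $\bar r_1,\dots,\bar r_n$ in an atomless Boolean algebra, find pairwise disjoint nonzero $u_k \leq \bar r_k$---is precisely the paper's Fact~\ref{lemma:atomless subsets}, and the subsequent gluing of the witnesses $a^{(k)}$ along the partition of unity is the same construction. Your packaging as a contradiction (rather than a direct verification) and your extra step of extending the disjoint $u_k$ to a full partition of unity are cosmetic differences; the substance is the same.
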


\begin{lemma}\label{lemma:SCP boolean combination}
	For every first-order formula $\phi(x_1,\dots,x_n)$, there exists a boolean combination of Palyutin formulas $\psi(x_1,\dots,x_n)$ such that
	\[\SCP \models \forall x_1 \dots \forall x_n \; \left( \phi \leftrightarrow \psi \right) ~.\]
\end{lemma}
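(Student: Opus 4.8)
The plan is to proceed by induction on the structure of $\phi$. Write $\mathcal{B}$ for the set of formulas that are, modulo $\SCP$, equivalent to a boolean combination of Palyutin formulas. Since $\mathcal{B}$ is by construction closed under $\neg$, $\wedge$ and $\vee$ (SCP-equivalence respecting these connectives), and since every atomic formula is a Palyutin formula, the only case requiring work is the quantifier step. Because $\forall x \, \theta \equiv \neg \exists x \, \neg \theta$ and $\mathcal{B}$ is closed under negation, it suffices to establish the sub-claim: if $\psi$ is a boolean combination of Palyutin formulas, then $\exists x \, \psi \in \mathcal{B}$. Granting this, the induction closes, applying the inductive hypothesis to the immediate subformula (so that $\exists x\,\chi$ becomes $\exists x\,\psi$ with $\psi$ a boolean combination, using that SCP-equivalence is preserved under quantification) before invoking the sub-claim.

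For the sub-claim, I would first put $\psi$ in disjunctive normal form and distribute the existential quantifier over the disjunction, reducing to the case $\exists x \, (\alpha \wedge \neg \beta_1 \wedge \dots \wedge \neg \beta_m)$, where $\alpha$ and the $\beta_k$ are Palyutin formulas; here $\alpha$ is the conjunction of the positive literals, which is Palyutin because Palyutin formulas are closed under conjunction (if there are no positive literals, adjoin the atomic formula $x = x$ so that $\alpha$ is genuinely a Palyutin formula). Rewriting $\neg \beta_1 \wedge \dots \wedge \neg \beta_m$ as $\neg \bigvee_k \beta_k$, this disjunct is $\neg \forall x \, (\alpha \to \bigvee_{k=1}^m \beta_k)$.

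Now comes the one genuinely non-tautological step, and the place where $\SCP$ enters. The $\SCP$ axiom for $\alpha, \beta_1, \dots, \beta_m$ (with the single quantified variable $x$) states precisely that $\forall x \, (\alpha \to \bigvee_k \beta_k) \to \bigvee_k \forall x \, (\alpha \to \beta_k)$; as the converse implication is a tautology, modulo $\SCP$ we have $\forall x \, (\alpha \to \bigvee_k \beta_k) \equiv \bigvee_k \forall x \, (\alpha \to \beta_k)$. Negating, $\exists x \, (\alpha \wedge \neg \bigvee_k \beta_k) \equiv \bigwedge_k \exists x \, (\alpha \wedge \neg \beta_k)$ modulo $\SCP$, so it remains to treat a single $\exists x \, (\alpha \wedge \neg \beta)$.

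Finally, for $\exists x \, (\alpha \wedge \neg \beta)$ I would use the H-operation. The formula $H \defeq (\exists x \, \alpha) \wedge \forall x \, (\alpha \to \beta)$ is a Palyutin formula by definition, as is $\exists x \, \alpha$, and one checks the tautology $\exists x \, (\alpha \wedge \neg \beta) \equiv (\exists x \, \alpha) \wedge \neg H$ (when $\exists x\, \alpha$ fails both sides are false; when it holds, $\neg H$ reduces to $\neg \forall x\,(\alpha \to \beta) = \exists x\,(\alpha \wedge \neg \beta)$). Thus $\exists x \, (\alpha \wedge \neg \beta)$ is a boolean combination of the Palyutin formulas $\exists x\, \alpha$ and $H$, completing the sub-claim and hence the induction. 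The main obstacle is isolating the correct reduction so that the $\SCP$ axioms apply verbatim: the two ingredients are distributivity of $\exists$ over $\vee$ (to reach a single positive Palyutin formula $\alpha$ guarded by negated Palyutin formulas) and the fact that $\SCP$ is exactly tailored to split $\forall x\,(\alpha \to \bigvee_k \beta_k)$; once these are in place the remaining equivalences are tautological and use only the H-operation.
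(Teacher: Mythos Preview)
Your proposal is correct and follows the standard argument (the paper itself does not prove this lemma but refers to Palmgren; your proof is essentially the one given there, and also matches the strategy the paper uses for the continuous analogue in \cref{lemma:approx EB-combination} and \cref{lemma:SCP B-combination cont}). The key reductions---DNF plus distributing $\exists$ over $\vee$, then invoking the $\SCP$ axiom to split $\forall x\,(\alpha\to\bigvee_k\beta_k)$, and finally expressing $\exists x\,(\alpha\wedge\neg\beta)$ via the h-operation---are exactly the intended ones.
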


\begin{corollary}\label{lemma:SCP completeness}
	Suppose that $T$ is a \emph{Palyutin-complete theory} i.e.\ that for every Palyutin sentence $\phi$, either $T \models \phi$ or $T \models \neg \phi$. Then $T \cup \SCP$ is a complete theory.
\end{corollary}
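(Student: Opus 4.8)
The plan is to show that, modulo $\SCP$, deciding an arbitrary sentence reduces to deciding a boolean combination of Palyutin sentences, something $T$ already does by hypothesis. The heavy lifting has in effect been carried out in \ref{lemma:SCP boolean combination}, so the corollary is mostly bookkeeping.

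First I would fix an arbitrary $\signature$-sentence $\sigma$ and apply \ref{lemma:SCP boolean combination} with no free variables (the case $n = 0$) to obtain a boolean combination $\psi$ of Palyutin sentences with $\SCP \models \sigma \leftrightarrow \psi$. Writing $\psi$ as a propositional combination of Palyutin sentences $\phi_1, \dots, \phi_k$, I would then invoke Palyutin-completeness of $T$: each $\phi_j$ is decided by $T$, so setting $\epsilon_j = 1$ when $T \models \phi_j$ and $\epsilon_j = 0$ when $T \models \neg \phi_j$, every model of $T$ assigns truth value $\epsilon_j$ to $\phi_j$. Consequently $\psi$ takes the constant truth value $b$ obtained by evaluating its propositional skeleton at $(\epsilon_1, \dots, \epsilon_k)$, so $T \models \psi$ if $b = 1$ and $T \models \neg\psi$ otherwise. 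Since $\SCP \models \sigma \leftrightarrow \psi$, this yields $T \cup \SCP \models \sigma$ or $T \cup \SCP \models \neg \sigma$; as $\sigma$ was arbitrary, $T \cup \SCP$ decides every sentence.

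The only genuinely delicate point, and the one I expect to be the main obstacle, is the consistency of $T \cup \SCP$, without which completeness is vacuous. I would flag that this is precisely where the hypotheses matter: if $T \models \SCP$ — the case in which the corollary is applied — then $T \cup \SCP = T$ and consistency is immediate from that of $T$. More generally, starting from a model $M \models T$, its Fréchet reduced power $M^{\frechet}$ models $\SCP$ by \ref{lemma:atomless boolean algebra SCP} and, since Palyutin formulas are bipreserved under reduced products, satisfies exactly the same Palyutin sentences as $M$, hence the same ones as $T$; this is the natural source of $\SCP$-models compatible with the Palyutin-theory of $T$. I therefore expect the consistency bookkeeping to be the only part requiring attention, the decidability argument itself being routine once \ref{lemma:SCP boolean combination} is in hand.
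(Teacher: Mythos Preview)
Your proposal is correct and follows essentially the same route as the paper: the key step is \ref{lemma:SCP boolean combination}, after which Palyutin-completeness of $T$ decides the resulting boolean combination. The paper does not write out a proof for the classical corollary, but its continuous analogue (\ref{lemma:SCP completeness cont}) argues semantically by taking two models $M,N \models T \cup \SCP$ and showing $M \equiv N$, whereas you argue syntactically that $T \cup \SCP$ decides every sentence; this is a cosmetic difference. Your explicit discussion of consistency via $M^{\frechet}$ is a welcome addition that the paper leaves implicit.
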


There is a natural diagonal embedding $M \subseteq M^\filter$ when we identify an element $a \in M$ by the tuple $(a,a,\dots)$. With the tools given above, it is not hard to see that for models of $\SCP$, this embedding is actually elementary.

\begin{lemma}
	If $M$ is an $\signature$-structure such that $M \models \SCP$ and $\filter$ is a filter, then $M \preceq M^\filter$.
\end{lemma}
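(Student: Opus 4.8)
The plan is to reduce the full elementarity statement to the case of Palyutin formulas, where the diagonal embedding preserves truth in both directions simply because Palyutin formulas are bipreserved under reduced products, and then to bootstrap to arbitrary first-order formulas using \ref{lemma:SCP boolean combination}. Throughout, write $\bar a$ for the diagonal image in $M^\filter$ of a tuple $a$ from $M$, i.e.\ the class of the constant tuple $(a,a,\dots)$.

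First I would observe that both structures involved are models of $\SCP$. We have $M \models \SCP$ by hypothesis, and the reduced power $M^\filter$ is the reduced product of the constant family $(M)_{i \in I}$, in which $\SCP$ holds at every index (hence at $\filter$-many of them); so \ref{lemma:pres SCP} gives $M^\filter \models \SCP$. Next I would check that the diagonal embedding preserves every Palyutin formula $\phi(x)$ in both directions. If $M \models \phi(a)$, then $\phi(a)$ holds in each factor $M_i = M$, in particular at $\filter$-many indices, so preservation of Palyutin formulas under reduced products yields $M^\filter \models \phi(\bar a)$. Conversely, if $M^\filter \models \phi(\bar a)$, then copreservation yields that $\phi(a)$ holds in $M_i = M$ for $\filter$-many $i$; since $\filter$ is proper this index set is nonempty, so $M \models \phi(a)$. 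Hence $M \models \phi(a) \iff M^\filter \models \phi(\bar a)$ for every Palyutin $\phi$, and as this biconditional is stable under the boolean connectives, it holds for every boolean combination of Palyutin formulas.

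Finally I would assemble these facts. Given an arbitrary first-order $\phi(x)$, \ref{lemma:SCP boolean combination} provides a boolean combination of Palyutin formulas $\psi(x)$ with $\SCP \models \forall x \, (\phi \leftrightarrow \psi)$. Since both $M$ and $M^\filter$ model $\SCP$, we obtain $M \models \phi(a) \iff M \models \psi(a)$ and $M^\filter \models \phi(\bar a) \iff M^\filter \models \psi(\bar a)$, while the previous paragraph gives $M \models \psi(a) \iff M^\filter \models \psi(\bar a)$. Chaining these three equivalences yields $M \models \phi(a) \iff M^\filter \models \phi(\bar a)$ for every $\phi$ and every $a$, which is exactly $M \preceq M^\filter$. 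I do not expect a serious obstacle here, since the substantive work is already carried by \ref{lemma:pres SCP}, \ref{lemma:SCP boolean combination}, and the bipreservation of Palyutin formulas; the only point requiring care is the copreservation step, which transfers truth from $M^\filter$ back to $M$ precisely because the filter is proper and the family is constant.
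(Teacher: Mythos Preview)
Your proposal is correct and follows essentially the same route as the paper: use \ref{lemma:pres SCP} to get $M^\filter \models \SCP$, use bipreservation of Palyutin formulas (and hence of their boolean combinations) across the diagonal embedding, and invoke \ref{lemma:SCP boolean combination} to reduce an arbitrary $\phi$ to such a combination. You spell out a bit more detail than the paper does—particularly the copreservation step and why properness of $\filter$ matters—but the argument is the same.
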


\begin{proof}
	Let $\phi(x)$ be a first-order formula and $a \in M^x$. From \cref{lemma:SCP boolean combination} there exists a boolean combination of Palyutin formulas $\psi(x)$ such that $\SCP \models \forall x \; \left( \phi \leftrightarrow \psi \right)$. Since Palyutin formulas are bi-preserved, we clearly have $M \models \psi(a)$ if and only if $M^\filter \models \psi(a)$. But according to \cref{lemma:pres SCP}, $M$ and $M^\filter$ are both models of $\SCP$ so $M \models \phi(a)$ if and only if $M^\filter \models \phi(a)$.
\end{proof}

We now write $M \equiv_{\Pal} N$ when $M$ and $N$ satisfy the same Palyutin sentences.

\begin{theorem}\label{lemma:Palyutin equivalence}
	Let $M$ and $N$ be two $\signature$-structures. The following are equivalent :
	\begin{enumerate}[(i)]
		\item $M \equiv_{\Pal} N$ ;
		\item $M^{\frechet} \equiv N^{\frechet}$ ;
		\item $M^{\frechet \otimes \ultrafilter} \simeq N^{\frechet \otimes \ultrafilter}$ for some ultrafilter $\ultrafilter$ ;
		\item $M^{\filter} \simeq N^{\filter}$ for some filter $\filter$ ;
		\item $M^{\filter} \equiv_{\Pal} N^{\filter}$ for some filter $\filter$.
	\end{enumerate}
\end{theorem}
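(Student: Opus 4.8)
The plan is to establish the cycle $(i)\Rightarrow(ii)\Rightarrow(iii)\Rightarrow(iv)\Rightarrow(v)\Rightarrow(i)$. The workhorse throughout is a consequence of the bipreservation of Palyutin formulas: for any Palyutin sentence $\phi$ and any (proper) filter $\filter$ one has $M^\filter \models \phi$ if and only if $M \models \phi$. Indeed, all factors of a reduced power equal $M$, so ``$\phi$ holds for $\filter$-many factors'' just says ``$M \models \phi$'' (the set $\{i : M \models \phi\}$ is either $\emptyset$ or the whole index set, and $\filter$ is proper); preservation gives one implication and copreservation the other. This equivalence alone disposes of the cheap ends of the cycle. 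For $(v)\Rightarrow(i)$, if $M^\filter \equiv_{\Pal} N^\filter$ then for every Palyutin sentence $\phi$ we chain $M \models \phi \Leftrightarrow M^\filter \models \phi \Leftrightarrow N^\filter \models \phi \Leftrightarrow N \models \phi$, so $M \equiv_{\Pal} N$. The implication $(iv)\Rightarrow(v)$ is immediate since an isomorphism yields elementary equivalence and hence agreement on Palyutin sentences, and $(iii)\Rightarrow(iv)$ holds by taking $\filter = \frechet \otimes \ultrafilter$.

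The first genuine step is $(i)\Rightarrow(ii)$. Assume $M \equiv_{\Pal} N$. By \cref{lemma:atomless boolean algebra SCP} both $M^\frechet$ and $N^\frechet$ are models of $\SCP$, so I would reduce elementary equivalence to agreement on Palyutin sentences via \cref{lemma:SCP boolean combination}: given any first-order sentence $\chi$, there is a boolean combination $\psi$ of Palyutin sentences with $\SCP \models \chi \leftrightarrow \psi$. Since both powers satisfy $\SCP$, $\chi$ and $\psi$ have the same truth value in each of them; and by the bipreservation equivalence above, each Palyutin sentence occurring in $\psi$ holds in $M^\frechet$ iff it holds in $M$, and in $N^\frechet$ iff it holds in $N$. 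As $M \equiv_{\Pal} N$, these truth values agree, whence $M^\frechet \models \chi \Leftrightarrow N^\frechet \models \chi$ and thus $M^\frechet \equiv N^\frechet$. (Equivalently, one may invoke \cref{lemma:SCP completeness} to note that $\SCP$ together with the common Palyutin theory of $M$ and $N$ is complete and that both Fréchet powers are models of it.)

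The remaining step $(ii)\Rightarrow(iii)$ is where the classical machinery enters and is the main obstacle. From $M^\frechet \equiv N^\frechet$, the Keisler--Shelah isomorphism theorem furnishes a single ultrafilter $\ultrafilter$ with $(M^\frechet)^\ultrafilter \simeq (N^\frechet)^\ultrafilter$. To turn this into a statement about $M$ and $N$ I would use the iteration (Fubini) lemma for reduced powers, i.e.\ a canonical isomorphism $(M^\filter)^\filterA \simeq M^{\filter \otimes \filterA}$, which is standard (Chang--Keisler, Section 6.2--3). Applying it with $\filter = \frechet$ and $\filterA = \ultrafilter$ converts the Keisler--Shelah isomorphism into $M^{\frechet \otimes \ultrafilter} \simeq N^{\frechet \otimes \ultrafilter}$, which is exactly $(iii)$.

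The point demanding the most care is precisely the bookkeeping in $(ii)\Rightarrow(iii)$: one must form $\frechet \otimes \ultrafilter$ on the correct index set and check that the iteration isomorphism is canonical enough to be applied uniformly to both $M$ and $N$, so that the single Keisler--Shelah isomorphism transports across it simultaneously. Every other step reduces either to the bipreservation equivalence or to the completeness of the $\SCP$-augmented Palyutin theory established earlier in the section.
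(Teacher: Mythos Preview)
Your proof is correct and follows essentially the same cycle as the paper's: the paper also argues $(i)\Rightarrow(ii)$ via \cref{lemma:atomless boolean algebra SCP} together with \cref{lemma:SCP completeness}, invokes Keisler--Shelah for $(ii)\Rightarrow(iii)$, treats $(iii)\Rightarrow(iv)\Rightarrow(v)$ as obvious, and closes $(v)\Rightarrow(i)$ by bipreservation. The only difference is presentational: you spell out the Fubini/iteration step $(M^{\frechet})^{\ultrafilter}\simeq M^{\frechet\otimes\ultrafilter}$ that the paper leaves implicit.
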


\begin{proof}
	Suppose (i). Since Palyutin formulas are bi-preserved, $M^{\frechet} \equiv_{\Pal} M \equiv_{\Pal} N \equiv_{\Pal} N^{\frechet}$ and thus $\mathrm{Th}(M) \cap \mathrm{Th}(N)$ is Palyutin-complete. But we also have $M^{\frechet}, N^{\frechet} \models \SCP$. We can now use \cref{lemma:SCP completeness} to deduce that $M^{\frechet} \equiv N^{\frechet}$.
	
	(ii) $\Longrightarrow$ (iii) is a consequence of Keisler-Shelah's Theorem.
	
	(iii) $\Longrightarrow$ (iv) and (iv) $\Longrightarrow$ (v) are obvious.
	
	Suppose (v). Take $\filter$ such that $M^\filter \equiv_{\Pal} N^\filter$. Then $M \equiv_{\Pal}  M^\filter \equiv_{\Pal} N^\filter \equiv_{\Pal} N$.
\end{proof}

Fix now a consistent theory $T$. Define
\begin{align*}
	T_{\Pal} & \defeq \{\phi \text{ Palyutin sentence} : T \models \phi \} ~;\\
	T_{\Pal}^\ast & \defeq T_P \cup \{\neg \phi : \phi \text{ Palyutin sentence such that } T \not\models \phi \} ~.
\end{align*}
By construction, these theories are both preserved under reduced products and reduced roots, and $T_{\Pal}$ is moreover preserved under cartesian factors. Clearly, $T \models T_{\Pal}$ so $T_{\Pal}$ is consistent. The theory $T_{\Pal}^\ast$ is also consistent but one has to give another kind of argument.

\begin{lemma}\label{lemma:consistency T_P*}
	There exists a cartesian product of models of $T$ that is a model of $T_{\Pal}^\ast$. In particular, $T_{\Pal}^\ast)$ is consistent.
\end{lemma}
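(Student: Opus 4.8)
The plan is to produce, in one stroke, a cartesian product of models of $T$ satisfying every sentence of $T_{\Pal}^\ast$ simultaneously. The whole argument rests on two facts already at hand: Palyutin sentences are \emph{bipreserved} under reduced products, and a cartesian product $\prod_{i \in I} M_i$ is nothing but the reduced product $\prod_{i \in I} M_i / \{I\}$ by the trivial filter $\{I\}$, which is proper as soon as $I \neq \emptyset$ and for which ``$\{I\}$-many'' simply means ``all''.

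First I would collect the obstructions to be killed. Let $\{\phi_j : j \in J\}$ enumerate the Palyutin sentences $\phi$ with $T \not\models \phi$; for each such $j$ choose, using $T \not\models \phi_j$, a model $M_j \models T$ with $M_j \models \neg \phi_j$. (If $J = \emptyset$ then $T_{\Pal}^\ast = T_{\Pal}$ and any single model of $T$ already works, so I may assume $J \neq \emptyset$, whence $\{J\}$ is proper.) Set $M \defeq \prod_{j \in J} M_j$. For the positive part, each factor $M_j$ models $T$, hence every $\psi \in T_{\Pal}$; since Palyutin sentences are preserved under reduced products, $M \models \psi$, so $M \models T_{\Pal}$. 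For the negative part, fix a Palyutin sentence $\phi$ with $T \not\models \phi$, say $\phi = \phi_{j_0}$; were $M \models \phi$, copreservation read along the filter $\{J\}$ would give $M_j \models \phi$ for all $j$, in particular $M_{j_0} \models \phi$, contradicting the choice of $M_{j_0}$. Hence $M \models \neg \phi$. Taken together, these show $M \models T_{\Pal}^\ast$, and as $M$ is a cartesian product of models of $T$ both assertions of the lemma follow.

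The only genuine idea is to interpret copreservation under reduced products along $\{J\}$: this is precisely preservation under cartesian factors applied to each individual Palyutin sentence, which lets a single dedicated factor $M_{j_0}$ refute $\phi_{j_0}$ in the entire product, while the positive part survives no matter how many factors are included. I therefore do not expect a serious obstacle; the only points requiring care are the degenerate case $J = \emptyset$ and the observation that $\{J\}$ is a proper filter, both of which are immediate.
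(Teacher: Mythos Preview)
Your proof is correct and follows the same approach as the paper: index the product by the set of Palyutin sentences not entailed by $T$, choose one countermodel for each, and use preservation of Palyutin sentences for $T_{\Pal}$ together with copreservation for the negations. Your extra care about the degenerate case $J = \emptyset$ is harmless but in fact unnecessary here, since $\bot$ is a Palyutin sentence and $T$ is assumed consistent.
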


\begin{proof}
	Let $\Phi \defeq \{\phi : \phi \text{ Palyutin sentence}, T \not\models \phi \}$. Take for every $\phi \in \Phi$ a model $M_\phi \models T$, $M_\phi \models \neg \phi$. Define $N \defeq \prod_{\phi \in \Phi} M_\phi$. $T_{\Pal}$ is preserved under reduced products and every $M_\phi$ is a model of $T_{\Pal}$ so $N \models T_{\Pal}$. Moreover, for every $\phi \in \Phi$, $M_\phi \not\models \phi$ so by co-preservation of $\phi$, $N \not\models \phi$. Hence, $N \models T_{\Pal}^\ast$.
\end{proof}

\begin{lemma}\label{lemma:char T_P*}
	Let $M$ be an $\signature$-structure. The following are equivalent :
	\begin{enumerate}[(i)]
		\item $M \models T_{\Pal}^\ast$.
		\item There exists a family $N = (N_i)_{i \in I}$ of models of $T$ such that $M \equiv_{\Pal} \prod_i N_i$ and for every $M' \models T_{\Pal}$, $M \times M' \equiv_{\Pal} M$.
		\item Some reduced power of $M$ is isomorphic to a reduced product of models of $T$ and for every $M' \models T$, there is some filter $\filter$ such that $(M \times M')^\filter \simeq M^\filter$.
	\end{enumerate}
\end{lemma}

\begin{proof}
	Assume (i). Using \cref{lemma:consistency T_P*}, there exists a cartesian product $N_I$ of models of $T$ satisfying $T_{\Pal}^\ast$, so clearly $N_I \equiv_{\Pal} M$. Now take $M' \models T_{\Pal}$. $M \times M' \models T_{\Pal}$ by preservation of $T_{\Pal}$. Moreover, for any Palyutin sentence $\phi$ such that $T \not\models \phi$, $M \models \neg \phi$ so $M \times M' \models \neg \phi$ by co-preservation. Hence, $M \times M' \models T_{\Pal}^\ast$ i.e.\ $M \times M' \equiv_{\Pal} M$.
	
	(ii) $\Longrightarrow$ (iii) can be seen as a consequence of \cref{lemma:Palyutin equivalence}.
	
	Assume (iii). Consider a reduced product $N_\filter$ of models of $T$ and a filter $\filterA$ such that $M^\filterA \simeq N_\filter$. This isomorphism gives us $M^\filterA \models T_{\Pal}$ and $M \models T_{\Pal}$ using co-preservation. Now, let $\phi$ be a Palyutin sentence such that $T \not\models \phi$. Let $M' \models T$ be such that $M' \models \neg \phi$. Then, $M \times M' \models \neg \phi$. If $\filter$ is now a filter such that $(M \times M')^\filter \simeq M^\filter$, then $(M \times M')^\filter, M^\filter$ and $M$ are models of $\neg \phi$, hence $M \models T_{\Pal}^\ast$
\end{proof}

\begin{theorem}\label{thm:char T_P}
	Let $M$ be an $\signature$-structure. The following are equivalent :
	\begin{enumerate}[(i)]
		\item $M \models T_{\Pal}$
		\item There exists a family $N = (N_i)_{i \in I}$ of models of $T$ such that $M \times N_I \equiv_{\Pal} N_I$.
		\item There exist some filters $\filter, \filterA$, some family $N$ of models of $T$ and some $\signature$-structure $M'$ such that $M^\filter \times M' \simeq N_\filterA$.
	\end{enumerate}
\end{theorem}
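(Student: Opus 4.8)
The plan is to prove the cycle of implications $(i) \Rightarrow (ii) \Rightarrow (iii) \Rightarrow (i)$, relying throughout on the bipreservation of Palyutin formulas and on the fact that $T_{\Pal}$ is preserved under reduced products, reduced roots, and cartesian factors.

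For $(i) \Rightarrow (ii)$, I would take the family $N = (M_\phi)_{\phi \in \Phi}$ produced by \cref{lemma:consistency T_P*}, so that its cartesian product $N_I$ is a product of models of $T$ with $N_I \models T_{\Pal}^\ast$; in particular $N_I$ satisfies a Palyutin sentence $\phi$ exactly when $T \models \phi$. Assuming $M \models T_{\Pal}$, I claim $M \times N_I \models T_{\Pal}^\ast$, whence $M \times N_I \equiv_{\Pal} N_I$ since both then have Palyutin theory exactly $T_{\Pal}$. Indeed, the binary cartesian product $M \times N_I$ is a reduced product over a two-element index set by the full filter (for which ``$\filter$-many factors'' means ``both factors''): since $M \models T_{\Pal}$ and $N_I \models T_{\Pal}$, preservation under reduced products gives $M \times N_I \models T_{\Pal}$; and for any Palyutin sentence $\phi$ with $T \not\models \phi$, we have $N_I \models \neg\phi$, so if $M \times N_I \models \phi$ then copreservation of $\phi$ would force $N_I \models \phi$, a contradiction, hence $M \times N_I \models \neg\phi$.

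The technical heart is $(ii) \Rightarrow (iii)$. From $M \times N_I \equiv_{\Pal} N_I$, \cref{lemma:Palyutin equivalence} yields a filter $\filter$ on some index set $J$ with $(M \times N_I)^\filter \simeq (N_I)^\filter$. I would then invoke two standard commutation properties of reduced products. First, reduced powers distribute over binary products, $(M \times N_I)^\filter \simeq M^\filter \times (N_I)^\filter$, because relations and functions in a cartesian product are computed coordinatewise and the defining filter condition passes through each factor. Second, a reduced power of a cartesian product is again a reduced product of the same factors: $(N_I)^\filter \simeq N'_\filterA$, where $N'$ is the family $(N_i)$ reindexed over $J \times I$ by $N'_{(j,i)} = N_i$, and $\filterA$ is the product filter on $J \times I$ generated by the sets $A \times I$ for $A \in \filter$. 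Setting $M' \defeq (N_I)^\filter$, these isomorphisms combine to give $M^\filter \times M' \simeq (N_I)^\filter \simeq N'_\filterA$, which is exactly the form required in (iii), with $N'$ a family of models of $T$.

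Finally, $(iii) \Rightarrow (i)$ is a direct chase through the preservation properties of $T_{\Pal}$. Each $N_i \models T$, hence $N_i \models T_{\Pal}$, so $N_\filterA \models T_{\Pal}$ by preservation under reduced products. The isomorphism $M^\filter \times M' \simeq N_\filterA$ then gives $M^\filter \times M' \models T_{\Pal}$, so $M^\filter \models T_{\Pal}$ by preservation under cartesian factors, and finally $M \models T_{\Pal}$ by preservation under reduced roots. The main obstacle is the pair of commutation isomorphisms in $(ii) \Rightarrow (iii)$: although both are folklore, one must be careful to identify the correct product filter $\filterA$ on $J \times I$ so that $(N_I)^\filter$ is genuinely presented as a reduced product of \emph{models of $T$}, rather than merely as a reduced power of their product.
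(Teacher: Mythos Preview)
Your proof is correct and follows the same architecture as the paper's: $(i)\Rightarrow(ii)$ via \cref{lemma:consistency T_P*}, $(ii)\Rightarrow(iii)$ via \cref{lemma:Palyutin equivalence}, and $(iii)\Rightarrow(i)$ by the preservation properties of $T_{\Pal}$. The only difference is presentational: for $(i)\Rightarrow(ii)$ the paper cites \cref{lemma:char T_P*} directly while you unfold its argument inline, and for $(ii)\Rightarrow(iii)$ the paper simply says ``direct consequence'' whereas you spell out the two commutation isomorphisms $(M\times N_I)^\filter \simeq M^\filter \times (N_I)^\filter$ and $(N_I)^\filter \simeq N'_\filterA$, which is a welcome clarification.
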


\begin{proof}
	Assume (i). Use \cref{lemma:consistency T_P*} to get $N_I$ a cartesian product of models of $T$ satisfying $T_{\Pal}^\ast$. Then it is clear from \cref{lemma:char T_P*} that $M \times N_I \equiv_{\Pal} N_I$.

	(ii) $\Longrightarrow$ (iii) is a direct consequence of \cref{lemma:Palyutin equivalence}.
	
	Assume (iii), and take a filter $\filter$, an $\signature$-structure $M'$ and $N_\filterA$ a reduced product of models of $T$ such that $M^\filter \times M' \simeq N_\filterA$. All $N_i$'s are models of $T_{\Pal}$ and so $M$ is also a model of $T_{\Pal}$ since $T_{\Pal}$ is preserved under reduced products, cartesian factors and reduced roots.
\end{proof}

Now the characterization of Palyutin theories (i.e.\ theory axiomatizable by Palyutin sentences) is a direct consequence of the previous fact :

\begin{corollary}\label{thm:pres Palyutin theory}
$T$ is preserved under reduced products, reduced roots and (finite) cartesian factors if and only if $T$ is axiomatizable by a Palyutin theory.
\end{corollary}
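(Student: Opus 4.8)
The plan is to prove that $T$ is logically equivalent to $T_{\Pal}$, the set of its Palyutin consequences, exactly when the three preservation properties hold. Since $T_{\Pal}$ consists of Palyutin sentences and $T \models T_{\Pal}$ always holds, the assertion that $T$ is axiomatizable by a Palyutin theory reduces to the single inclusion $T_{\Pal} \models T$.

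For the direction from axiomatizability to preservation, suppose $T$ is equivalent to a set $\Sigma$ of Palyutin sentences. Each $\phi \in \Sigma$ is bipreserved under reduced products by the proposition above. Preservation of every axiom yields preservation of $T$ under reduced products directly. For the other two properties I would note that a reduced power $M^\filter$ is the reduced product of a constant family, and that a cartesian product $\prod_i M_i$ is the reduced product by the trivial filter $\{I\}$ (for which $\sim_\filter$ is equality and ``$\filter$-many'' means ``all''); copreservation of each $\phi \in \Sigma$ then forces $M \models \phi$ whenever $M^\filter \models \phi$, and $M_i \models \phi$ for every $i$ whenever $\prod_i M_i \models \phi$. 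This establishes preservation of $T$ under reduced roots and under (finite) cartesian factors.

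For the converse, assume $T$ satisfies all three preservation properties and take any $M \models T_{\Pal}$. By the equivalence (i) $\Leftrightarrow$ (iii) of \ref{thm:char T_P}, there are filters $\filter, \filterA$, a family $N = (N_i)$ of models of $T$, and a structure $M'$ with $M^\filter \times M' \simeq N_\filterA$. Now I would chain the hypotheses against this decomposition: preservation under reduced products gives $N_\filterA \models T$, hence $M^\filter \times M' \models T$; preservation under cartesian factors gives $M^\filter \models T$; and preservation under reduced roots finally gives $M \models T$. Thus every model of $T_{\Pal}$ is a model of $T$, so $T_{\Pal} \models T$ and $T$ is axiomatized by the Palyutin theory $T_{\Pal}$.

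The argument is essentially an assembly of results already in place, so I do not expect a genuine obstacle. The only points demanding care are the translations in the first direction --- recognizing reduced roots and cartesian factors as instances of reduced products, so that the bipreservation of Palyutin sentences can be invoked --- and, in the second direction, the clean threefold chaining of the hypotheses against the isomorphism $M^\filter \times M' \simeq N_\filterA$ furnished by \ref{thm:char T_P}.
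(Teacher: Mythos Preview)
Your proof is correct and follows essentially the same approach as the paper: the paper simply declares the result a direct consequence of \ref{thm:char T_P}, and your argument is precisely the unpacking of that consequence, chaining the three preservation hypotheses against the decomposition $M^\filter \times M' \simeq N_\filterA$ to obtain $T_{\Pal} \models T$. The easy direction you spell out via bipreservation is likewise what the paper has in mind (and makes explicit in the continuous analogue, \ref{thm:char Palyutin theories}, by pointing to \ref{thm:pres Palyutin theories}).
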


Now a simple application of the Compactness Theorem gives us :

\begin{corollary}\label{thm:pres Palyutin formula}
Let $\phi$ be a formula. $\phi$ is bipreserved under reduced products if and only if it is equivalent to a Palyutin formula.
\end{corollary}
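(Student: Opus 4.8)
The plan is to derive this from \ref{thm:pres Palyutin theory} together with a one-line compactness argument, after recasting the free variables of $\phi$ as constants. The easy direction is already available: if $\phi$ is equivalent to a Palyutin formula, then the proposition that Palyutin formulas are bi-preserved under reduced products immediately gives that $\phi$ is bipreserved. For the converse, I would write $\phi = \phi(x)$, introduce a tuple $c$ of new constant symbols matching $x$, work in the signature $\signature(c)$, and consider the theory $T \defeq \{\phi(c)\}$. By the convention fixed in the definition of (bi)preservation for formulas, the assumption that $\phi(x)$ is bipreserved under reduced products is precisely the assertion that $T$ is both preserved and copreserved under reduced products.

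First I would check that bipreservation of $T$ supplies the three hypotheses of \ref{thm:pres Palyutin theory}. Preservation under reduced products is immediate. For preservation under reduced roots, I apply copreservation to a reduced power: if $M^\filter \models T$, then $M_i \models T$ for $\filter$-many $i$, and since all the factors equal $M$ and $\filter$ is proper (hence every $\filter$-large set is nonempty), this forces $M \models T$. For preservation under (finite) cartesian factors, I observe that the reduced product by the trivial filter $\{I\}$ is exactly the full cartesian product $\prod_i M_i$, because $a \sim_\filter b$ then means $a_i = b_i$ for all $i$; copreservation by this filter forces $\{i : M_i \models T\} = I$, i.e.\ every factor is a model of $T$.

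With these three properties established, \ref{thm:pres Palyutin theory} yields a set $\Sigma$ of Palyutin sentences over $\signature(c)$ axiomatising $T$, so that $\Sigma$ and $\{\phi(c)\}$ have the same models. Since $\Sigma \models \phi(c)$, compactness produces a finite $\Sigma_0 \subseteq \Sigma$ with $\Sigma_0 \models \phi(c)$; conversely $\phi(c) \models \psi$ for each $\psi \in \Sigma_0$, so $\phi(c)$ is equivalent to the finite conjunction $\bigwedge \Sigma_0$. As Palyutin formulas are closed under conjunction, $\bigwedge \Sigma_0$ is a Palyutin sentence over $\signature(c)$, and replacing the constants $c$ back by the variables $x$ turns it into a Palyutin formula of $\signature$ with free variables $x$ equivalent to $\phi(x)$.

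I expect the only delicate points to be bookkeeping: the passage between free variables and constants, and the verification that copreservation specialises correctly to reduced roots (through reduced powers) and to cartesian factors (through the trivial filter $\{I\}$). Once those specialisations are in place, the rest is a black-box application of \ref{thm:pres Palyutin theory} followed by the compactness collapse from an infinite Palyutin axiomatisation to a single Palyutin formula.
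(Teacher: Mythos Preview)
Your proposal is correct and follows essentially the same route as the paper: reduce to \ref{thm:pres Palyutin theory} by checking that bipreservation of the associated $\signature(x)$-theory $\{\phi(x)\}$ yields preservation under reduced products, reduced roots, and cartesian factors, then collapse the resulting Palyutin axiomatisation to a single sentence via compactness and closure under conjunction. The paper's own proof is terser---it takes the derivation of the three preservation hypotheses from bipreservation as understood and jumps straight to ``we already know that $\phi$ is equivalent to a Palyutin theory''---whereas you spell out the reduced-power and trivial-filter specialisations explicitly, but the underlying argument is the same.
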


\begin{proof}
	We only prove one implication, the other being already known. Assume that $\phi$ is preserved  under reduced products and reduced factors. We already know that $\phi$ is equivalent to a Palyutin theory $T$. We have $T \models \phi$ so by Compactness Theorem and the stability of Palyutin formulas under $\wedge$, there exists some Palyutin sentence $\psi$ such that $T \models \psi$ and $\psi \models \phi$. Hence, $\psi$ is equivalent to $\phi$.
\end{proof}

Similar techniques can be used to derive preservation theorems for related fragments of first-order logic, like Boolean combinations of Palyutin formulas, positive combinations of Palyutin formulas or Horn combinations of Palyutin formulas. See \ref{sec:preservation theorems} and in particular \ref{table:axiomatisability of continuous theories} for more details on that in the case of continuous logic, which of course can be adapted for classical first-order logic.

\section{Reduced products of metric structures}
\label{sec:reduced products}

Until the end of the paper, $\signature$ will be a fixed \emph{metric} signature.

The definition of the reduced product for metric structures is due to Lopes and more details can be found in \cite{lopesReducedProductsSheaves2013} if necessary. Before giving this defintion, we need to recall some facts about $\limsup$. If $(u_i)_{i \in I}$ is a bounded family of real numbers and $\filter$ is a filter on $I$, we define:
\begin{equation*}
	\limsup_{i \to \filter} u_i \defeq \inf_{J \in \filter} \sup_{i \in J} u_i ~.
\end{equation*}
Note that if $\filter$ is the Fréchet filter on $\omega$, $\limsup_{i \to \filter}$ is the more common operator $\limsup_{i \to \infty}$ and the general case actually shares most properties with this particular one. If $\filter$ is the trivial filter $\{I\}$ then $\limsup_{i \to \filter} u_i$ is simply $\sup_{i \in I} u_i$. We also define
\begin{equation*}
	\liminf_{i \to \filter} u_i \defeq \sup_{J \in \filter} \inf_{i \in J} u_i~.
\end{equation*}
In the case where $\filter$ is an ultrafilter, $\limsup_{i \to \filter}$ and $\liminf_{i \to \filter}$ both coincide with $\lim_{i \to \filter}$. More generally, if $\filter$ is a filter and if $\hat{\filter}$ is the set of all ultrafilters containing $\filter$, then the following hold :
\begin{align*}
	\limsup_{i \to \filter} u_i &= \sup_{\ultrafilter \in \hat{\filter}} \lim_{i \to \ultrafilter} u_i ~;\\
	\liminf_{i \to \filter} u_i &= \inf_{\ultrafilter \in \hat{\filter}} \lim_{i \to \ultrafilter} u_i ~.
\end{align*}
This easily implies that $\limsup$ and $\liminf$ both commute with nondecreasing maps and also the following, for any families of real numbers $(u_i)_{i \in I}$ and $(v_i)_{i \in I}$,
\begin{align}
	\limsup_{i \to \filter} \max(u_i, v_i) = \max \left( \limsup_{i \to \filter} u_i, \limsup_{i \to \filter} v_i \right) \label{eq:prop of limsup}\\
	\liminf_{i \to \filter} (u_i + v_i) \leq \limsup_{i \to \filter} u_i + \liminf_{i \to \filter} v_i \leq \limsup_{i \to \filter} (u_i + v_i)~.
\end{align}

Let us also state another easy property that will be used in \ref{sec:palyutin formulas}.
\begin{lemma}\label{lemma:limsup and quantifiers}
Let $(X_i)_{i \in I}$ be a family of sets and $(u_i)_i \in \prod_i \real^{X_i}$ be a uniformly bounded family of real valued maps, and $\filter$ a filter on $I$. Then
\begin{align*}
	\limsup_{i \to \filter} \inf_{x \in X_i} u_i(x) & = \inf_{x \in \prod_i X_i} \limsup_{i \to \filter} u_i(x_i) \\
	\limsup_{i \to \filter} \sup_{x \in X_i} u_i(x) & = \sup_{x \in \prod_i X_i} \limsup_{i \to \filter} u_i(x_i) ~.\\
\end{align*}
\end{lemma}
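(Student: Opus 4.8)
The plan is to prove each of the two identities by establishing the two opposite inequalities separately; the ``$\leq$'' and ``$\geq$'' directions are perfectly symmetric between the $\inf$ and $\sup$ statements. Throughout I use two elementary facts about $\limsup_{i \to \filter}$ recalled above: it is monotone (if $u_i \leq v_i$ for all $i$, then $\limsup_{i \to \filter} u_i \leq \limsup_{i \to \filter} v_i$), and, as an instance of commuting with nondecreasing maps, it is invariant under the translation $t \mapsto t + c$, i.e.\ $\limsup_{i \to \filter}(u_i + c) = \limsup_{i \to \filter} u_i + c$ for any constant $c$. Uniform boundedness guarantees that every quantity involved is finite, and we may harmlessly assume each $X_i$ is nonempty (otherwise both sides of the relevant identity are infinite and agree).

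For the $\inf$ identity, one inequality is immediate: for every $x \in \prod_i X_i$ we have $\inf_{y \in X_i} u_i(y) \leq u_i(x_i)$ for each $i$, so monotonicity gives $\limsup_{i \to \filter} \inf_{y} u_i(y) \leq \limsup_{i \to \filter} u_i(x_i)$, and taking the infimum over all $x$ yields $\limsup_{i \to \filter} \inf_{y} u_i(y) \leq \inf_{x} \limsup_{i \to \filter} u_i(x_i)$. For the reverse inequality I fix $\epsilon > 0$ and, using the axiom of choice, pick for each $i$ a point $x_i \in X_i$ with $u_i(x_i) \leq \inf_{y} u_i(y) + \epsilon$; this assembles into a single element $x = (x_i)_i \in \prod_i X_i$. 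Monotonicity together with translation invariance then gives $\inf_{x'} \limsup_{i \to \filter} u_i(x'_i) \leq \limsup_{i \to \filter} u_i(x_i) \leq \limsup_{i \to \filter} \inf_{y} u_i(y) + \epsilon$, and letting $\epsilon \to 0$ closes the gap.

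The $\sup$ identity is proved in exactly the same manner with the inequalities reversed: the pointwise bound $u_i(x_i) \leq \sup_{y} u_i(y)$ gives $\sup_{x} \limsup_{i \to \filter} u_i(x_i) \leq \limsup_{i \to \filter} \sup_{y} u_i(y)$ directly, while choosing approximate maximizers $x_i \in X_i$ with $u_i(x_i) \geq \sup_{y} u_i(y) - \epsilon$ and invoking monotonicity and translation invariance yields $\sup_{x} \limsup_{i \to \filter} u_i(x_i) \geq \limsup_{i \to \filter} u_i(x_i) \geq \limsup_{i \to \filter} \sup_{y} u_i(y) - \epsilon$ for every $\epsilon > 0$.

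The only point requiring care — and the reason the product $\prod_i X_i$ appears on the right-hand side — is the nontrivial direction, where one must glue the coordinatewise approximate optimizers into a genuine element of $\prod_i X_i$; this is a pure application of choice and needs nothing about $\filter$ beyond the monotonicity and translation invariance of $\limsup_{i \to \filter}$. In particular the argument never has to unfold the defining $\inf_{J \in \filter} \sup_{i \in J}$, nor pass through the ultrafilters of $\hat{\filter}$.
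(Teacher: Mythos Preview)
Your proof is correct. The paper itself states this lemma only as ``another easy property'' and gives no proof, so there is nothing to compare against; your argument --- monotonicity of $\limsup_{i \to \filter}$ for the trivial inequalities, and gluing coordinatewise $\epsilon$-approximate optimizers via choice for the nontrivial ones --- is exactly the standard elementary verification the author is implicitly invoking.
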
 

Now fix a metric signature $\signature$. Let $M = (M_i)_{i \in I}$ be a family of $\signature$-structures and $\filter$ a filter on $I$. We define a pseudo-distance $d_{\filter}$ on $\prod_{i \in I} M_i$ as :
\begin{equation*}
	d_{\filter}(a,b) \defeq \limsup_{i \to \filter} d^{M_i}(a_i, b_i)~.
\end{equation*}
We define a relation $\sim$ on $\prod_{i \in I} M_i$ where $a \sim b$ if $d_{\filter}(a,b) = 0$. Then $d_{\filter}$ defines a distance also denoted by $d_{\filter}$ on $D \defeq \prod_{i \in I} M_i / \sim$. We can check that $(D, d_{\filter})$ is a complete bounded metric space, that will be the domain of the reduced product. Let $\pi$ be the canonical quotient map (with dense image) $\prod_{i} M_i \to D$. We now define an $\signature$-structure $M_{\filter}$ with domain $D$ where :
\begin{itemize}
	\item For every constant symbol $c$ in $\signature$, $c^{M_{\filter}} \defeq \pi \left( (c^{M_i})_{i \in I} \right)$;
	\item For every function symbol $f$ in $\signature$, and every $a \in \prod_i M_i$ of the good sort,
	\[f^{M_{\filter}} \left( \pi(a) \right) \defeq \pi \left( (f^{M_i}(a_i))_{i \in I} \right) ~;\]
	\item For every predicate symbol $p$ in $\signature$, and every $a \in \prod_i M_i$ of the good sort,
	\[p^{M_{\filter}} \left( \pi(a) \right) \defeq \limsup_{i \to \filter} p^{M_{\filter}}(a_i) ~.\]
\end{itemize}
Of course, one must check that these definitions make sense and do indeed define an $\signature$-structure (recall that $\signature$ contains information uniformizing smoothness and bounds of the interpretations). This $\signature$-structure will be denoted by $\prod_i M_i / \filter$ or more conveniently by $M_{\filter}$ and is called the \emph{reduced product of $M$ by $\filter$}. If $a = (a_i)_{i \in I} \in \prod_i M_i$, then we denote by $a_\filter$ the element $\pi(a) \in M_\filter$. If all $M_{\filter}$'s are equal to some $\signature$-structure $N$, then $M_{\filter}$ is also denoted by $M^{\filter}$ and is called the \emph{reduced power of $M$ by $\filter$}. In the case where $\filter$ is an ultrafilter, the reduced product $M_{\filter}$ is in fact the ultraproduct usually denoted by $\prod_i M_i / \filter$. 

\begin{remark}
	When identifying a classical structure with a metric one thanks to the discrete metric, it is easy to check that this definition of the reduced product coincides with the classical one introduced by Łoś.
\end{remark}

The following gives a precise meaning of what we will call preservation and copreservation in the rest of the paper.

\begin{definition}
	Let $\phi(x)$ be a continuous formula. We say that:
	\begin{itemize}
		\item $\phi$ is \emph{preserved under reduced products} if for every family of structures $M = (M_i)_{i \in I}$ and every filter $\filter$ on $I$, $\phi^{M_{\filter}}(a_{\filter}) \leq \limsup_{i \to \filter} \phi^{M_i}(a_i)$;
		\item $\phi$ is \emph{copreserved under reduced products} if for every family of structures $M = (M_i)_{i \in I}$ and every filter $\filter$ on $I$, $\limsup_{i \to \filter} \phi^{M_i}(a_i) \leq \phi^{M_{\filter}}(a_{\filter})$;
		\item $\phi$ is \emph{bipreserved under reduced products} if it is both preserved and copreserved under reduced products.
	\end{itemize}
\end{definition}

A straightforward induction on terms proves that any atomic formula is bipreserved under reduced products. Of course, this is not the case at all when we replace "atomic formula" by "continuous formula".

\begin{notation}
	If $M = (M_i)_{i \in I}$ is a family of $\signature$-structures and $T$ is a theory, then we denote the set $\{i \in I : M_i \models T\}$ by $\indices{T}_M$, or simply by $\indices{T}$ when the context makes it clear. If $T$ consists of a single condition of the form $\phi \leq r$, we will also write $\indices{T} = \indices{\phi \leq r}_M = \indices{\phi \leq r}$. Abusing of notations, we will sometimes also write $\indices{\phi < r} = \indices{\phi < r}_M \defeq \{i \in I : \phi^{M_i} < r\}$ even though $\{\phi < r\}$ is not a continuous theory \emph{per se}.
	
	If $M = (M_i)_{i \in I}$ is a family of $\signature$-structures, $a \in \prod_i M_i^x$ and $p(x)$ is a partial type, then we will also write $\indices{p(a)}$ to mean $\indices{p(x)}_{(M_i,a_i)_i}$ i.e.\ the set $\{i \in I : a_i \models p\}$. Again, we will abuse of this notation and use it also replacing $p$ by single conditions of various forms.
\end{notation}

\begin{definition}
	Let $T$ be a continuous theory.
	\begin{itemize}
		\item We say that $T$ is \emph{preserved under reduced products} if for every family of structures $M = (M_i)_{i \in I}$ and every filter $\filter$ on $I$, if the set $\indices{T} \in \filter$, then $M_{\filter} \models T$.
		\item We say that $T$ is \emph{preserved under reduced roots} if for every structure $M$ and every filter $\filter$ on $I$, if $M^\filter \models T$, then $M \models T$.
		\item We say that $T$ is \emph{preserved under (finite) cartesian factors} if for every (finite) family of $\signature$-structures $(M_i)_{i \in I}$ such that $\prod_{i \in I} M_i \models T$, then $M_i \models T$ for all $i \in I$
	\end{itemize}
\end{definition}

Let us make explicit the link between preservation of sentences and of theories.

\begin{proposition}
	Let $\phi$ be a continuous sentence. The following are equivalent:
	\begin{enumerate}[(i)]
		\item The sentence $\phi$ is preserved under reduced products;
		\item For every $r \in \real$, the theory $\{\phi \leq r\}$ is preserved under reduced products.
	\end{enumerate}
\end{proposition}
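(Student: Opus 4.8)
The plan is to unwind both definitions and reduce everything to the formula $\limsup_{i \to \filter} u_i = \inf_{J \in \filter} \sup_{i \in J} u_i$ together with the upward closure of filters. No deep ingredient is needed; the two implications are direct manipulations of the witness sets appearing in the $\limsup$.

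For the direction (i) $\Rightarrow$ (ii), I would fix $r \in \real$ together with a family $M = (M_i)_{i \in I}$ and a filter $\filter$ such that $\indices{\phi \leq r} \in \filter$. Using $\indices{\phi \leq r}$ itself as a witness set in the infimum defining $\limsup$, and the fact that each $\phi^{M_i} \leq r$ for $i \in \indices{\phi \leq r}$, I get $\limsup_{i \to \filter} \phi^{M_i} \leq \sup_{i \in \indices{\phi \leq r}} \phi^{M_i} \leq r$. Since $\phi$ is preserved as a sentence, $\phi^{M_\filter} \leq \limsup_{i \to \filter} \phi^{M_i} \leq r$, so $M_\filter \models \{\phi \leq r\}$, which is exactly preservation of the theory $\{\phi \leq r\}$.

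For (ii) $\Rightarrow$ (i), I would fix an arbitrary family and filter and set $s \defeq \limsup_{i \to \filter} \phi^{M_i}$; the goal is $\phi^{M_\filter} \leq s$. Given $\epsilon > 0$, the definition of $s$ as an infimum provides $J \in \filter$ with $\sup_{i \in J} \phi^{M_i} < s + \epsilon$, so $J \subseteq \indices{\phi \leq s + \epsilon}$ and hence $\indices{\phi \leq s + \epsilon} \in \filter$ by upward closure of $\filter$. Preservation of the theory $\{\phi \leq s + \epsilon\}$ then yields $M_\filter \models \{\phi \leq s + \epsilon\}$, i.e.\ $\phi^{M_\filter} \leq s + \epsilon$. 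Letting $\epsilon \to 0$ gives $\phi^{M_\filter} \leq s$, as desired.

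I do not expect any genuine obstacle here; the argument is essentially bookkeeping with the definition of $\limsup$. The only point requiring a little care is the passage through the value $s + \epsilon$ and the limit in $\epsilon$ in the second direction: the witness set $J$ only certifies membership in $\indices{\phi \leq s + \epsilon}$ rather than in $\indices{\phi \leq s}$, and one cannot in general take $\epsilon = 0$ since the infimum defining $\limsup$ need not be attained. This is exactly the continuous-logic analogue of the classical equivalence between preservation of a sentence and preservation of its associated $\{\phi \leq r\}$ theories.
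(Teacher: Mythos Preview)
Your proposal is correct and follows essentially the same route as the paper: in both directions you unwind the definition of $\limsup$ and use the witness set $\indices{\phi \leq r}$ (or $\indices{\phi \leq s+\epsilon}$) together with upward closure of the filter. In fact the paper's proof of (ii) $\Rightarrow$ (i) is truncated mid-sentence, and your argument is exactly the natural completion of it.
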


\begin{proof}
	Suppose (i). Let $r \in \real$, $M = (M_i)_{i \in I}$ a family of structures and $\filter$ a filter on $I$ and assume that $\indices{\phi \leq r} \in \filter$. Hence, we have
	\[\phi^{M_\filter} \leq \limsup_{i \to \filter} \phi^{M_i} = \inf_{J \in \filter} \sup_{i \in J} \phi^{M_i}
		\leq \sup_{i \in \indices{\phi \leq r}} \phi^{M_i} \leq r ~,\]
	which exactly means that $M_\filter \models \phi \leq r$, thus (ii) holds.
	
	Now, suppose (ii).  Let $M = (M_i)_{i \in I}$ be a family of structures and $\filter$ a filter on $I$. Put $r \defeq \limsup_{i \to \filter} \phi^{M_i}$ and let $\epsilon > 0$. By definition of $\limsup$, there exists a $J \in \filter$ such that $\sup_{i \in J} \phi^{M_i} \leq r + \epsilon$, i.e.\ $J \subseteq \indices{\phi \leq r + \epsilon}$. Since
\end{proof}

A similar characterization exists for copreservation of sentences but it involves conditions of the form $\phi < r$ which are not axiomatisable in continuous logic \emph{a priori}.

\begin{proposition}
	Let $\phi$ be a continuous sentence. The following are equivalent:
	\begin{enumerate}[(i)]
		\item $\phi$ is copreserved under reduced products;
		\item For every $r \in \real$, every family of structures $M = (M_i)_{i \in I}$ and every filter $\filter$ on $I$, if $\phi^{M_{\filter}} < r$, then $\indices{\phi < r} \in \filter$.
	\end{enumerate}
\end{proposition}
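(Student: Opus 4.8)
The plan is to prove both implications by unwinding the definition $\limsup_{i \to \filter} u_i = \inf_{J \in \filter} \sup_{i \in J} u_i$ and carefully translating between strict inequalities involving a fixed threshold $r$ and the non-strict $\limsup$ inequality that defines copreservation. Throughout I would keep in mind that, since $\phi$ is a sentence, copreservation means exactly that $\limsup_{i \to \filter} \phi^{M_i} \leq \phi^{M_{\filter}}$ holds for every family $M = (M_i)_{i \in I}$ and every filter $\filter$.

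For (i) $\Rightarrow$ (ii), I would fix $r \in \real$, a family $M$ and a filter $\filter$, and assume $\phi^{M_{\filter}} < r$. Copreservation gives $\inf_{J \in \filter} \sup_{i \in J} \phi^{M_i} = \limsup_{i \to \filter} \phi^{M_i} \leq \phi^{M_{\filter}} < r$. Since this infimum is strictly below $r$, it is witnessed by an individual set, i.e.\ there is some $J \in \filter$ with $\sup_{i \in J} \phi^{M_i} < r$. This forces $\phi^{M_i} < r$ for every $i \in J$, so $J \subseteq \indices{\phi < r}$, and upward closure of $\filter$ yields $\indices{\phi < r} \in \filter$.

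For (ii) $\Rightarrow$ (i), I would fix $M$ and $\filter$, set $r \defeq \phi^{M_{\filter}}$, and establish $\limsup_{i \to \filter} \phi^{M_i} \leq r$ by an $\epsilon$-argument. For each $\epsilon > 0$ we have $\phi^{M_{\filter}} < r + \epsilon$, so (ii) applied at $r + \epsilon$ gives $\indices{\phi < r + \epsilon} \in \filter$. Taking $J \defeq \indices{\phi < r + \epsilon}$ we get $\sup_{i \in J} \phi^{M_i} \leq r + \epsilon$, whence $\limsup_{i \to \filter} \phi^{M_i} \leq \sup_{i \in J} \phi^{M_i} \leq r + \epsilon$. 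Letting $\epsilon \to 0$ gives $\limsup_{i \to \filter} \phi^{M_i} \leq r = \phi^{M_{\filter}}$, which is copreservation.

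There is no deep obstacle here: the argument is a routine manipulation of the $\inf$--$\sup$ presentation of $\limsup_{i \to \filter}$. The only point demanding care is the interplay of strict and non-strict inequalities. In the forward direction one must use that a \emph{strict} inequality on an infimum over $\filter$ is realised by a single member $J \in \filter$; in the converse one cannot apply (ii) directly at $r = \phi^{M_{\filter}}$ (the hypothesis $\phi^{M_{\filter}} < r$ would then be false), and must instead approximate from above at $r + \epsilon$ and pass to the limit. This asymmetry is exactly why the characterisation of copreservation is phrased with the non-axiomatisable conditions $\phi < r$ rather than $\phi \leq r$, in contrast with the preceding proposition for preservation.
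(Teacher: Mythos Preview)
Your proof is correct and matches the paper's argument essentially line for line: the forward direction passes from the strict inequality on the $\limsup$ to a witnessing $J \in \filter$, and the backward direction applies (ii) at $r + \epsilon$ (equivalently, the paper's ``any $r > \phi^{M_\filter}$'') and lets $\epsilon \to 0$. Your closing remark on why the characterisation must be phrased with strict conditions $\phi < r$ is also exactly the point.
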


\begin{proof}
	Suppose (i). Let $r \in \real$, $M = (M_i)_{i \in I}$ a family of structures and $\filter$ a filter on $I$ and assume that $\phi^{M_{\filter}} < r$. Hence, we have
	\[\inf_{J \in \filter} \sup_{i \in J} \phi^{M_i} = \limsup_{i \to \filter} \phi^{M_i} \leq \phi^{M_\filter} < r~,\]
	so there exists $J \in \filter$ such that $\sup_{i \in J} \phi^{M_i} < r$ which exactly means that $J \subseteq \indices{\phi < r}$. Since $\filter$ is a filter, $\indices{\phi < r} \in \filter$ thus (ii) holds.
	
	Now, suppose (ii).  Let $M = (M_i)_{i \in I}$ be a family of structures and $\filter$ a filter on $I$. Take any $r > \phi^{M_\filter}$, then $\indices{\phi < r} \in \filter$ which means that 
	\[\limsup_{i \to \filter} \phi^{M_i} = \inf_{J \in \filter} \sup_{i \in J} \phi^{M_i} \leq \sup_{i \in \indices{\phi < r}} \phi^{M_i} \leq r ~.\]
	Thus, $\limsup_{i \to \filter} \phi^{M_i} \leq \phi^{M_\filter}$.
\end{proof}

\subsection*{Continuous Horn formulas}

Goldbring and Keisler introduced an analogue of Horn formulas in the framework of continuous logic \cite{goldbringContinuousSentencesPreserved2022}. They used the name \emph{conditional formula} but we prefer to push the analogy further by talking about \emph{Horn formulas}. We review this notion here, both for the sake of completeness and to state \ref{thm:characterization of Horn sentences}, which is not new in nature but in its presentation.

\begin{definition}[\cite{goldbringContinuousSentencesPreserved2022}, Definition 3.5]
	A formula $\phi$ is \emph{primitive Horn} if there are atomic formulas $\alpha,\beta_1, \dots, \beta_n$ a unary nondecreasing connnective $C$ and unary nonincreasing connectives $D_1,\dots,D_n$ such that
	\[\phi = \min(C(\alpha),D_1(\beta_1),\dots,D_n(\beta_n)) ~.\]
	The set of \emph{Horn formulas} is the smallest set containing primitive Horn formulas and closed under the $\max$ connective and the quantifiers $\sup$ and $\inf$. A Horn formula with no free variables is a \emph{Horn sentence}. A \emph{Horn condition} is a condition of the form $\phi \leq r$ where $\phi$ is a Horn sentence and $r \in \real$.
\end{definition}

\begin{remark}\label{rk:Horn formulas are closed under nondecreasing connectives}
	Using a straightforward induction on the defintion of Horn formulas, one can prove that if $\phi$ is a Horn formula and $C$ is a nondecreasing unary connective then $C \phi$ is equivalent to a Horn formula.
\end{remark}

The reason why we introduced Horn formulas is because they correspond essentially to the first-order formulas that are preserved under reduced products. It is easy to prove by induction that Horn formulas are preserved under reduced products.

Let $T$ be a continuous $\signature$-theory. Let $T_\Horn \defeq \{ \phi \leq r : \phi \text{ Horn condition}\}$. Goldbring and Keisler essentially proved the following.

\begin{fact}\label{fact:characterization of T_H}
	Let $M$ be an $\signature$-structure. The following are equivalent :
	\begin{enumerate}[(i)]
		\item $M \models T_{\Horn}$;
		\item Some ultrapower of $M$ is isomorphic to some reduced product of models of $T$.
	\end{enumerate}
	As a consequence, if $T$ is preserved under reduced products then $T$ can be axiomatised by Horn conditions.
\end{fact}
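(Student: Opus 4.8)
The plan is to prove the two implications separately and then read off the final consequence, following the classical Keisler--Galvin argument recalled at the start of \ref{sec:classical case}.

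First I would dispatch the easy direction (ii) $\Rightarrow$ (i). Suppose $M^{\ultrafilter} \simeq N_{\filter}$, where $\ultrafilter$ is an ultrafilter, $\filter$ a filter, and each $N_i \models T$. Let $\phi \leq r$ be a Horn condition in $T_{\Horn}$, so that $T \models \phi \leq r$ and in particular $\phi^{N_i} \leq r$ for every $i$. Since Horn formulas are preserved under reduced products, $\phi^{N_{\filter}} \leq \limsup_{i \to \filter} \phi^{N_i} \leq r$. Transporting this along the isomorphism and applying Łoś' Theorem to the ultrapower, which gives $\phi^{M^{\ultrafilter}} = \phi^{M}$ for the sentence $\phi$, we obtain $\phi^{M} \leq r$. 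As $\phi \leq r$ was arbitrary in $T_{\Horn}$, this yields $M \models T_{\Horn}$.

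The substance lies in the converse. Assuming $M \models T_{\Horn}$, my goal would be to produce a family $N = (N_i)$ of models of $T$ and a filter $\filter$ with $N_{\filter} \equiv M$; once this is achieved the conclusion follows formally. Indeed, the continuous Keisler--Shelah Theorem then yields an ultrafilter $\ultrafilter$ with $M^{\ultrafilter} \simeq (N_{\filter})^{\ultrafilter}$, and an ultrapower of a reduced product of $N$ is itself isomorphic to a reduced product of $N$ over an appropriate (tensor) filter on the product index set; this Fubini-type identification turns $(N_{\filter})^{\ultrafilter}$ into a reduced product of models of $T$, which is exactly (ii). The hard part will be the construction of a reduced product of models of $T$ elementarily equivalent to $M$, the continuous counterpart of the Keisler--Galvin construction and the only place where $M \models T_{\Horn}$ is genuinely used. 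Here the idea is to index by finite approximations of $\mathrm{Th}(M)$ and assemble suitable models of $T$ along a filter so that the $\limsup$ computing the values of Horn sentences in the reduced product reproduces their values in $M$: the hypothesis $\phi^{M} \leq \sup_{N \models T} \phi^{N}$ for every Horn sentence $\phi$ provides, for each finite piece and each $\epsilon$, models of $T$ witnessing the relevant Horn values up to $\epsilon$, while the reduced product is free to readjust the values of non-Horn sentences. A saturation or compactness argument, together with \ref{lemma:limsup and quantifiers} to handle the interaction of $\limsup$ with the quantifiers, would then ensure the resulting reduced product satisfies all of $\mathrm{Th}(M)$. This is precisely the content of Goldbring and Keisler's theorem, whose argument (phrased there in terms of conditions) I would import.

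Finally I would deduce the stated consequence. Suppose $T$ is preserved under reduced products and let $M \models T_{\Horn}$. By the equivalence just proved, $M^{\ultrafilter} \simeq N_{\filter}$ for some reduced product of models of $T$; since $\indices{T} = I \in \filter$, preservation gives $N_{\filter} \models T$, hence $M^{\ultrafilter} \models T$. By Łoś' Theorem $M \equiv M^{\ultrafilter}$, so every condition of $T$ holds in $M$ and thus $M \models T$. Combined with the trivial implication $T \models T_{\Horn}$, this shows that $T$ and $T_{\Horn}$ have the same models, i.e.\ $T$ is axiomatised by the Horn conditions in $T_{\Horn}$.
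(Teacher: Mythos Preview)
The paper does not actually prove this statement: it is stated as a \emph{Fact} attributed to Goldbring and Keisler, with the remark that their original argument uses CH plus absoluteness and that a direct proof can be found in the author's master's thesis. Your outline therefore already goes beyond what the paper provides, and its overall structure is correct. The easy direction and the derivation of the final consequence are fine as written. For the hard direction, your reduction is the right one: the statement (ii) is equivalent, via Keisler--Shelah and the iteration of reduced products, to the existence of a reduced product of models of $T$ elementarily equivalent to $M$, and establishing that from $M \models T_{\Horn}$ is precisely the substance of the Goldbring--Keisler result you propose to import. The one point worth being explicit about is the iteration step, i.e.\ that $(N_{\filter})^{\ultrafilter}$ is again isomorphic to a reduced product of the $N_i$'s over a suitable filter on the product index set; this is standard in the classical setting but in the metric setting one should check that the $\limsup$ defining the pseudometric factors correctly through the tensor filter. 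It does, but it is not purely formal.
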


The original proof of Goldbring and Keisler uses CH and then relies on absoluteness argument to show that it is a theorem in ZFC. A more direct proof can be found in the author's Master Thesis \cite{fronteauProduitsReduitsLogique2023}, in the proof of Théorème 4.14. An application of the Compactness Theorem leads to the following.

\begin{corollary}\label{thm:characterization of Horn sentences}
	Let $\phi(x)$ be a continuous formula. The following are equivalent :
	\begin{enumerate}[(i)]
		\item $\phi$ is preserved under reduced products ;
		\item $\phi$ is approximable by Horn formulas.
	\end{enumerate}
\end{corollary}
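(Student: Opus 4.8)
The plan is to prove the two implications separately, the implication (ii) $\Rightarrow$ (i) being the routine one. I will read \emph{approximable by Horn formulas} as uniform approximability, and note that the Horn formulas produced below will in fact satisfy $\phi - \epsilon \leq \psi \leq \phi$, so approximation from below is what is really at stake. Granting such $\psi$, preservation of $\phi$ follows from preservation of Horn formulas (already noted in the text) by passing to the $\limsup$: from $\phi \leq \psi + \epsilon$ and $\psi \leq \phi$ one gets $\phi^{M_\filter}(a_\filter) \leq \psi^{M_\filter}(a_\filter) + \epsilon \leq \limsup_{i \to \filter}\psi^{M_i}(a_i) + \epsilon \leq \limsup_{i \to \filter}\phi^{M_i}(a_i) + \epsilon$, and letting $\epsilon \to 0$ closes the gap.

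For (i) $\Rightarrow$ (ii), the key reduction is to the pointwise claim that $\phi$ is the pointwise supremum of the Horn formulas lying below it. Writing $\Lambda$ for the set of Horn formulas $\psi$ with $\psi^N(b) \leq \phi^N(b)$ for all $(N,b)$, I would first show $\phi^N(b) = \sup_{\psi \in \Lambda}\psi^N(b)$ for every $(N,b)$, and then extract a single $\epsilon$-approximation via the Compactness Theorem: in the signature $\signature(x)$ obtained by turning the free variables into constants, the set of conditions $\{\psi \leq \phi - \epsilon : \psi \in \Lambda\}$ has no common model by the pointwise equality, so some finite $F \subseteq \Lambda$ already has none, which means $\max_{\psi \in F}\psi \geq \phi - \epsilon$ everywhere (the usual slack of continuous compactness only forces $\epsilon \rightsquigarrow 2\epsilon$). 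Since Horn formulas are closed under $\max$, the formula $\max_{\psi \in F}\psi$ is Horn and lies in $[\phi - \epsilon, \phi]$, as wanted.

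The heart of the argument, and the step I expect to be the main obstacle, is the supremum claim, which is where preservation and \ref{fact:characterization of T_H} enter. By the earlier proposition relating preservation of the sentence $\phi$ to preservation of the theories $\{\phi \leq s\}$, each $T_s \defeq \{\phi \leq s\}$ is preserved under reduced products; combined with Łoś' Theorem for ultrapowers (which leaves the value of $\phi$ unchanged), \ref{fact:characterization of T_H} yields that $(T_s)_\Horn$ is equivalent to $T_s$, i.e.\ $(M',a') \models (T_s)_\Horn$ iff $\phi^{M'}(a') \leq s$. Now fix $(M,a)$, put $v \defeq \phi^M(a)$ and $\delta > 0$, and apply this with $s = v - \delta$: the pair $(M,a)$ fails $T_s$, hence fails some Horn condition $\chi \leq r$ with $T_s \models \chi \leq r$, so $\chi^M(a) > r$ while $\chi^N(b) > r \Rightarrow \phi^N(b) > v - \delta$ for all $(N,b)$. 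I would then turn this into a member of $\Lambda$ by composing $\chi$ with a nondecreasing connective $C$ — then $C\chi$ is again Horn by \ref{rk:Horn formulas are closed under nondecreasing connectives}. Writing $m \defeq \inf_{N,b}\phi^N(b)$ (finite, as formulas are bounded), I take $C$ equal to $\min(m, v - \delta)$ on $(-\infty, r]$, rising continuously to $v - \delta$ on $[r, r + (\chi^M(a) - r)/2]$ and constant equal to $v - \delta$ afterwards. On $\{\chi \leq r\}$ one has $C\chi \leq m \leq \phi$, on $\{\chi > r\}$ one has $C\chi \leq v - \delta < \phi$, and $(C\chi)^M(a) = v - \delta$ since $\chi^M(a) > r$; so $C\chi \in \Lambda$ witnesses $\sup_{\psi \in \Lambda}\psi^M(a) \geq v - \delta$, and letting $\delta \to 0$ proves the claim. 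The delicate points are the placement of the ramp strictly between $r$ and $\chi^M(a)$ to keep $C$ nondecreasing and the case bookkeeping (the truncation at $\min(m, v-\delta)$) ensuring $C\chi \leq \phi$ on both regions; these are the calculations I expect to be the most error-prone.
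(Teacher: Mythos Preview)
Your proof is correct and rests on the same ingredients as the paper's --- the axiomatisation of each $\{\phi \leq r\}$ by Horn conditions via \ref{fact:characterization of T_H}, closure of Horn formulas under $\max$ and nondecreasing unary connectives (\ref{rk:Horn formulas are closed under nondecreasing connectives}), and compactness --- but the organisation differs. The paper slices the range of $\phi$ into finitely many levels $r_0 < \dots < r_k$, extracts for each level a single Horn separator $\psi_i$ by compactness, and then invokes the reusable \ref{lemma:approximation by B-combination} to glue these into a Horn approximant; that lemma is isolated precisely so it can be recycled later for the Palyutin case (\ref{thm:char Palyutin formulas}). You instead show pointwise that $\phi$ is the supremum of the Horn formulas lying below it, constructing $C\chi$ by hand for each $(M,a)$, and apply compactness once at the end to pass from pointwise to uniform. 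Your route is a touch more direct here but trades the general lemma for a bespoke connective.

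One small wrinkle: in your sketch of (ii) $\Rightarrow$ (i) you use the from-below inequality $\psi \leq \phi$, which you only \emph{establish} while proving (i) $\Rightarrow$ (ii). For this implication as stated you may only assume $\|\phi - \psi\| \leq \epsilon$; the same chain then gives $\phi^{M_\filter}(a_\filter) \leq \limsup_{i \to \filter}\phi^{M_i}(a_i) + 2\epsilon$, which is enough. Conversely, the slack you anticipated in the compactness step is not actually needed: if no model satisfies all the conditions $\psi \leq \phi - \epsilon$ for $\psi \in F$, then $\max_{\psi \in F}\psi > \phi - \epsilon$ pointwise, and together with $\max_{\psi\in F}\psi \leq \phi$ this already gives an $\epsilon$-approximation.
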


We take the opportunity of this corollary to introduce a technical lemma that will be used in similar applications later.

\begin{lemma}\label{lemma:approximation by B-combination}
	Let $\phi(x)$ be a continuous formula. Let $\epsilon > 0$ and $r_0 < \dots < r_k$ such that $r_{i + 1} - r_i = \epsilon$ for every $i < k$ and $r_0 \leq \phi \leq r_k$ in every $\signature$-structure. Let $\psi_0, \dots \psi_{k - 1}$ be continuous formulas. Assume that we can find $\lambda_i$'s such that for every $i < k$, $\lambda_i > 0$ and :
	\begin{align*}
			\phi \leq r_i & \models \psi_i \leq 0 \\
			\psi_i \leq \lambda_i & \models \phi < r_{i +  1} ~.
	\end{align*}
Then there are nondecreasing unary connectives $C_0,\dots,C_{k - 1}$ such that the formula defined by $\theta \defeq \max(C_0 \psi_0, \dots, C_{k -1} \psi_{k-1})$ satisfies $\|\phi - \theta\| \leq 2 \epsilon$.
\end{lemma}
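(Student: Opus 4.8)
The plan is to exhibit the $C_i$ explicitly as piecewise-linear ramps. For each $i < k$, let $C_i$ be the continuous, nondecreasing unary connective that equals $r_0$ on $(-\infty,0]$, rises linearly from $r_0$ to $r_{i+1}$ on $[0,\lambda_i]$, and is constant equal to $r_{i+1}$ on $[\lambda_i,+\infty)$; explicitly $C_i(t) \defeq \min\!\left(r_{i+1}, \max\!\left(r_0, r_0 + \frac{r_{i+1}-r_0}{\lambda_i}\,t\right)\right)$, which is a legitimate connective since it is continuous, bounded, and nondecreasing (here $\lambda_i > 0$ is what makes the slope finite). The two design choices that matter are that the resting value of every $C_i$ is the global lower bound $r_0$ of $\phi$ (not $r_i$), while its saturated value is exactly $r_{i+1}$.

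First I would record the pointwise contrapositives of the hypotheses. Reading $\phi \leq r_i \models \psi_i \leq 0$ and $\psi_i \leq \lambda_i \models \phi < r_{i+1}$ contrapositively, in every structure and at every parameter one has: if $\phi \leq r_i$ then $\psi_i \leq 0$, whence $C_i(\psi_i) = r_0$; and if $\phi \geq r_{i+1}$ then $\psi_i > \lambda_i$, whence $C_i(\psi_i) = r_{i+1}$.

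Now fix a structure and parameter at which $\phi = v$, so $v \in [r_0, r_k]$, and let $j$ be the largest index with $r_j \leq v$. For the upper bound I would split, for each $i < k$, into three cases according to the position of $v$ relative to $r_i$ and $r_{i+1}$: if $v \leq r_i$ then $C_i(\psi_i) = r_0 \leq v$; if $v \geq r_{i+1}$ then $C_i(\psi_i) \leq r_{i+1} \leq v$; and in the remaining band $r_i < v < r_{i+1}$ one simply bounds $C_i(\psi_i) \leq r_{i+1} = r_i + \epsilon < v + \epsilon$. Taking the maximum gives $\theta \leq v + \epsilon$. For the lower bound, if $j \geq 1$ then $v \geq r_j = r_{(j-1)+1}$, so the term $C_{j-1}$ saturates, giving $C_{j-1}(\psi_{j-1}) = r_j > v - \epsilon$; and if $j = 0$ then $v < r_1 = r_0 + \epsilon$, so the resting value already yields $\theta \geq r_0 > v - \epsilon$. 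Hence $\theta \geq v - \epsilon$, and combining, $|\phi - \theta| \leq \epsilon$ pointwise, so $\|\phi - \theta\| \leq \epsilon \leq 2\epsilon$.

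The main obstacle is the bookkeeping of resting and saturated values: one must ensure that the inactive connectives, namely those $C_i$ for which the threshold $r_{i+1}$ has not been crossed, never overshoot $v$, which is exactly what forces the resting value to be the global minimum $r_0$ rather than $r_i$; simultaneously the unique active connective $C_{j-1}$ must reach $r_j$, within $\epsilon$ of $v$ from below. The entire $\epsilon$-loss is incurred only in the transition band $(r_i, r_{i+1})$ and is controlled by the grid spacing, so the $2\epsilon$ in the statement is a comfortable margin, the construction above already giving $\epsilon$.
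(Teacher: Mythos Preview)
Your proof is correct and follows essentially the same approach as the paper: the connectives $C_i$ you define coincide with the paper's $\theta_i$ (the order of $\min$ and $\max$ is swapped, but both describe the clamp to $[r_0,r_{i+1}]$), and your case analysis is just a slightly more explicit version of the paper's band-by-band argument. You even observe that the construction already yields the bound $\epsilon$, whereas the paper records only the comfortable $2\epsilon$.
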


\begin{proof}
	Define $\theta_i \defeq \max\left( r_0, \min \left( r_{i + 1}, \frac{r_{i + 1} - r_0}{\lambda_i} \psi_i  + r_0 \right) \right)$ and $\theta \defeq \max(\theta_0, \dots , \theta_{k - 1})$. It is easy to see that 
	\[\begin{cases} & \models r_0 \leq \theta_i \leq r_{i + 1} \\
		\phi \leq r_i & \models \theta_i = r_0 \\
		\phi \geq r_{i + 1} & \models \theta_i = r_{i + 1} \text{ for every $0 \leq i < k$} ~.
		\end{cases} \]
	Thus, for every $0 \leq i < k$,
	\begin{align*}
	r_i \leq \phi \leq r_{i + 1} & \models \theta = \max(r_1, \dots, r_i, \theta_i, r_0, \dots, r_0)\\
	& \models r_i \leq \theta \leq r_{i + 1} ~,
	\end{align*}	
	proving that $\|\theta - \phi \| \leq 2 \epsilon$.
\end{proof}

\begin{proof}[Proof of \ref{thm:characterization of Horn sentences}]
	As explained above, the proof of (ii) $\Rightarrow$ (i) relies on a simple induction.
	
	Now, suppose that $\phi$ is preserved under reduced products. Let $\epsilon > 0$ and $r_0 < \dots < r_k$ be as in the assumptions of \ref{lemma:approximation by B-combination}. For every $0 \leq i < k$, the theory $\{\phi \leq r_i\}$ is preserved under reduced products so it can be axiomatised by a set of Horn conditions $T_i$. By Compactness and since Horn formulas are closed under the $\max$ connective and essentially by nondecreasing unary connectives by \ref{rk:Horn formulas are closed under nondecreasing connectives}, one can find a Horn formula $\psi_i$ and $\lambda_i > 0$ such that
	\[\phi \leq r_i \models \psi_i \leq 0 \text{ and } \psi_i \leq \lambda_i \models \phi \leq r_i + \epsilon / 2 ~.\]
	Hence, by \ref{lemma:approximation by B-combination} and using once again the closure of Horn formulas under $\max$ and nondecreasing connectives, we can find Horn formulas arbitrarily close to $\phi$.
\end{proof}

\section{Continuous Palyutin formulas}
\label{sec:palyutin formulas}

In this section we generalise results from \cite{palyutinCategoricalHornClasses1980} to continuous logic. In particular, we give an analogue of what is already known as \emph{Palyutin formulas} or \emph{h-formulas} in classical model theory.

\begin{definition}\label{def:Palyutin formulas}
The set of \emph{Palyutin formulas} is the smallest set such that :
\begin{itemize}
	\item Every atomic formula is a Palyutin formula;
	\item If $\phi$ is a Palyutin formula and $C$ is a nondecreasing unary connective, then $C \phi$ is a Palyutin formula;
	\item If $\phi, \psi$ are Palyutin formulas, $\max(\phi, \psi)$ is a Palyutin formula;
	\item If $\phi$ is a Palyutin formula and $x$ is a variable, $\inf_x \phi$, and $\sup_x \phi$ are Palyutin formulas;
	\item If $\phi, \psi$ are Palyutin formulas, $D$ is a nonincreasing unary connective with fixed point $\Delta$ and $x$ is a variable, $\max \left(\inf_x \phi, \sup_x \min(D \phi, \Delta, \psi) \right)$ is a Palyutin formula.
\end{itemize}
A \emph{Palyutin sentence} is a Palyutin formula with no free variables, and a Palyutin condition is a condition of the form $\phi \leq r$ where $\phi$ is a Palyutin sentence and $r \in \real$.
\end{definition}

\begin{remark}Let $D$ be a nonincreasing unary connective. Then, the map $t \mapsto t - D t$ is continuous, its limits at $- \infty$ is $- \infty$ and at $+ \infty$ is $+ \infty$. Hence, there exists a $\Delta \in \real$ such that $\Delta = D \Delta$. Moreover, this fixed point is unique since if $\Delta_1 \leq \Delta_2$ with $D \Delta_i = \Delta_i$, then 
\[\Delta_1 \leq \Delta_2 = D \Delta_2 \leq D \Delta_1 ~,\]
which implies that $\Delta_1 = \Delta_2$. Hence, the last item of \ref{def:Palyutin formulas} covers every nonincreasing unary connective, and there is no choice for the constant $\Delta$.
\end{remark}

As Horn formulas, Palyutin formulas are preserved under reduced products, but they are also copreserved. As usual, proving that our defined Palyutin formulas have the expected preservation properties is easy, but we write details here in particular for the last case to make sense of the appearance of fixed points in the definition we suggest.

\begin{proposition}\label{thm:preservation of Palyutin formulas}
	Palyutin formulas are bipreserved under reduced products.
\end{proposition}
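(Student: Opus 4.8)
The plan is to prove bipreservation by induction on the structure of Palyutin formulas, following \ref{def:Palyutin formulas}. Since atomic formulas are already known to be bipreserved, the work is in the four inductive clauses. For the connective and $\max$ cases the arguments are routine manipulations of $\limsup$: if $\phi$ is bipreserved and $C$ is nondecreasing, then since $\limsup$ commutes with nondecreasing maps we get $(C\phi)^{M_\filter}(a_\filter) = C(\phi^{M_\filter}(a_\filter)) = C(\limsup_{i \to \filter} \phi^{M_i}(a_i)) = \limsup_{i \to \filter} (C\phi)^{M_i}(a_i)$, giving both directions at once. For $\max(\phi,\psi)$, I would invoke \eqref{eq:prop of limsup}, which says exactly that $\limsup$ commutes with $\max$, so bipreservation of $\phi$ and $\psi$ transfers directly to $\max(\phi,\psi)$.

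For the quantifier clauses I would use \ref{lemma:limsup and quantifiers}, which is tailored precisely for this: it lets one swap $\limsup_{i \to \filter}$ past $\inf_{x \in \prod_i M_i}$ and $\sup_{x \in \prod_i M_i}$. The one subtlety is that the domain of the reduced product is a quotient of $\prod_i M_i$ (and we take its completion), so I would note that ranging $x$ over $\prod_i M_i^x$ computes the same infimum/supremum over $M_\filter^x$ up to the usual density and uniform-continuity argument, so that $(\inf_x \phi)^{M_\filter}(a_\filter) = \inf_{b} \phi^{M_\filter}(b_\filter, a_\filter) = \inf_b \limsup_i \phi^{M_i}(b_i,a_i) = \limsup_i \inf_x \phi^{M_i}(x,a_i)$, and symmetrically for $\sup_x$. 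This handles the fourth clause.

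The main obstacle, and the reason the definition carries a fixed point $\Delta$, is the last clause: the formula $\theta \defeq \max(\inf_x \phi, \sup_x \min(D\phi, \Delta, \psi))$. Here $D$ is nonincreasing, so $D\phi$ on its own is \emph{not} preserved in either direction — $\limsup$ does not commute with nonincreasing maps. The trick I expect to deploy is to exploit the cutoff by $\Delta$. The key point is that at any fixed index $i$ one can split according to whether $\phi^{M_i}(x,a_i) \leq \Delta$ or $> \Delta$: on the region where $\phi \leq \Delta$ one has $D\phi \geq \Delta$, so $\min(D\phi,\Delta,\psi) = \min(\Delta,\psi)$ and the troublesome $D\phi$ drops out; on the region where $\phi > \Delta$ one has $\min(D\phi,\Delta,\psi) \leq D\phi < \Delta$ but there the first disjunct $\inf_x \phi$ is controlled, and the whole expression $\theta$ rewrites (after the $\max$ with $\inf_x\phi$) into something monotone in $\phi$. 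Concretely I would show that $\theta$ equals a formula built only from preservation-friendly operations applied to $\phi$ and $\psi$, using that $D$ and $\Delta$ together convert the a priori non-monotone behaviour into an honest nondecreasing dependence on $\phi$ once the fixed point is used as the threshold. The proposition then follows by combining this identity with the inductive bipreservation of $\phi$ and $\psi$ and the already-established quantifier and $\max$ cases.
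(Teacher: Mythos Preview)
Your treatment of the first four clauses matches the paper's and is correct. The gap is in the last clause. The identity you announce --- rewriting $\theta = \max(\inf_x \phi,\, \sup_x \min(D\phi,\Delta,\psi))$ as a formula built only from ``preservation-friendly'' operations (nondecreasing connectives, $\max$, $\inf_x$, $\sup_x$) applied to $\phi$ and $\psi$ --- cannot exist. Any such expression would be monotone in $\phi$ pointwise, but $\theta$ is not: take a two-point domain $\{a,b\}$, $D(t)=-t$ (so $\Delta=0$), $\psi(a)=1$, $\psi(b)=-1$. With $\phi(a)=\phi(b)=-2$ one computes $\theta=0$, while increasing $\phi(a)$ to $2$ (and keeping $\phi(b)=-2$) gives $\theta=-1$. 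So no monotone rewriting is available, and the split you sketch into the regions $\phi\le\Delta$ and $\phi>\Delta$ does not collapse the $D\phi$ term after taking $\sup_x$ and the outer $\max$.

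What the paper actually does is a direct two-sided argument at the level of a fixed threshold $r$. For preservation, assume $\indices{\theta(a)\le r}\in\filter$; one already has $\inf_x\phi^{M_\filter}\le r$, and for an arbitrary $b$ one splits on whether $\min(\Delta, D\phi^{M_\filter}(a_\filter,b_\filter))\le r$: if so, done; if not, copreservation of $\phi$ forces $\indices{D\phi(a,b)>r}\in\filter$, which combined with the hypothesis yields $\indices{\psi(a,b)\le r}\in\filter$ and hence $\psi^{M_\filter}\le r$. For copreservation, assume $\theta^{M_\filter}(a_\filter)<r$; the case $\Delta<r$ is trivial, and when $r\le\Delta$ one constructs a witness $b\in\prod_i M_i^x$ coordinatewise (choosing $b_i$ to satisfy $\min(D\phi,\psi)\ge r$ if possible, else $\phi<r$ if possible), then uses bipreservation of $\phi$ and $\psi$ on this $b$ to deduce $\indices{\sup_x\min(D\phi,\Delta,\psi)<r}\in\filter$. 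The fixed point $\Delta$ is used precisely here: when $r\le\Delta$ and $\phi(a_i,b_i)<r$, one gets $D\phi(a_i,b_i)\ge Dr\ge D\Delta=\Delta\ge r$, which is what makes the constructed $b$ witness the right inequality on a large set.
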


\begin{proof}
	The proof is a straightforward induction.
	\begin{description}
		\item[Atomic formulas] Easy.
		\item[Nondecreasing unary connective] Use the fact that a nondecreasing map commutes with the $\limsup$.
		\item[Maximum] Use \ref{eq:prop of limsup}.
		\item[Quantifiers] Use \ref{lemma:limsup and quantifiers}.
		\item[The last case] It only remains to prove that if $\phi(x,y), \psi(x,y)$ are bipreserved under reduced products, $D$ is a nonincreasing unary connective with fixed point $\Delta$ and $x$ is a variable, then $\theta \defeq \max(\inf_y \phi, \sup_y \min(D\phi, \Delta, \psi))$ is also bipreserved under reduced products. Let us denote $\inf_y \phi$ by $\theta_1$ and $\min(D\phi, \Delta, \psi)$ by $\theta_2$. Consider a reduced product $M_\filter = \prod_i M_i / \filter$ and $a \in \prod_i M_i^x$. Suppose first that $\indices{\theta(a) \leq r} \in \filter$ for some $r \in \real$ and let us show that $\theta^{M_\filter}(a_\filter) \leq r$. By \ref{lemma:limsup and quantifiers}, we already know that $\theta_1^{M_\filter}(a_\filter) \leq r$. Let $b \in \prod_i M_i^y$.
		\begin{itemize}
			\item If $\Delta \leq r$ or $D \phi^{M_\filter}(a_\filter,b_\filter) \leq r$, it is clear that $\theta_2^{M_\filter}(a_\filter,b_\filter \leq r$.
			\item Otherwise, then since $\phi$ is copreserved, $\indices{D \phi(a,b) > r}_{(M_i,a_i,b_i)_i} \in \filter$. But $\indices{\theta(a) \leq r} \in \filter$ so $\filter{\theta_2(a,b) \leq r} \in \filter$. By taking the intersection, we deduce that $\indices{\psi(a,b) \leq r} \in \filter$, so by preservation of $\psi$, we have $\theta_2^{M_\filter}(a_\filter, b_\filter) \leq r$.
		\end{itemize}
	Now suppose that $\Delta < r$ for some $r \in \real$, and let us show that $\indices{\theta(a) < r} \in \filter$. First, $\indices{\theta_1(a) < r} \in \filter$ by \ref{lemma:limsup and quantifiers}. It then suffices to show that $\indices{\sup_y \theta_2(a,y) < r} \in \filter$ and we could conclude by taking an intersection. In the case where $\Delta < r$, it is clear, so we now assume that $r \leq \Delta$. We build $b \in \prod_i M_i$ such that for every $i \in I$:
	\begin{enumerate}[(1)]
		\item $\min(D \phi^{M_i}(a_i,b_i), \psi(a_i, b_i)) \geq r$ if such a $b_i$ exists;
		\item $\phi^{M_i}(a_i) < r$ else and if such a $b_i$ exists;
		\item $b_i$ is arbitrary else.
	\end{enumerate}
	Let us fix $i \in I$. If $\theta_1(a_i) < r$, then:
	\begin{itemize}
		\item Either we are in case (1) and then in particular $D \phi^{M_i}(a_i,b_i) \geq r$;
		\item Either we are in case (2) and then $D \phi^{M_i}(a_i,b_i) \geq D r \geq D(\Delta) = \Delta \geq r$.
	\end{itemize}
	We thus deduce that $\indices{D\phi \geq r} \supseteq \indices{\theta < r} \in \filter$. By preservation of $\phi$, we thus have $D \phi^{M_\filter}(a_\filter,b_\filter) \geq r$ and since $\theta_2^{M_\filter}(a_\filter, b_\filter) < r$, then necessarily $\psi^{M_\filter}(a_\filter, b_\filter) < r$. By construction of $b$ and copresevation of $\psi$, we get
	\[\indices{\sup_y \theta_2(a,y) < r} \supseteq \indices{\psi(a,b) < r} \in \filter ~. \qedhere\]
	\end{description}
\end{proof}

Of course, this allows us to state the following preservation results for theories axiomatised by Palyutin conditions.

\begin{corollary}\label{thm:pres Palyutin theories}
	Let $T$ be a theory axiomatised by Palyutin conditions. Then $T$ is preserved under reduced products, reduced roots and cartesian factors.
\end{corollary}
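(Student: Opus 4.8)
The plan is to reduce the statement to the case of a single Palyutin condition and then invoke the bipreservation property established in \ref{thm:preservation of Palyutin formulas}. Since $T$ is axiomatised by Palyutin conditions, a structure satisfies $T$ exactly when it satisfies each of these conditions, so it suffices to show that each condition $\phi \leq r$, with $\phi$ a Palyutin sentence and $r \in \real$, is individually preserved under reduced products, reduced roots, and cartesian factors. The three properties for $T$ then follow by applying the single-condition statement to every axiom.

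For preservation under reduced products, I would take a family $(M_i)_{i \in I}$ and a filter $\filter$ with $\indices{\phi \leq r} \in \filter$. On the set $\indices{\phi \leq r}$ we have $\phi^{M_i} \leq r$, whence $\limsup_{i \to \filter} \phi^{M_i} = \inf_{J \in \filter} \sup_{i \in J} \phi^{M_i} \leq \sup_{i \in \indices{\phi \leq r}} \phi^{M_i} \leq r$. Since $\phi$ is preserved (one half of \ref{thm:preservation of Palyutin formulas}), $\phi^{M_\filter} \leq \limsup_{i \to \filter} \phi^{M_i} \leq r$, so $M_\filter \models \phi \leq r$.

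The remaining two properties are both instances of copreservation, once one recalls how $\limsup_{i \to \filter}$ degenerates in special cases. For reduced roots, the reduced power $M^\filter$ is the reduced product of the constant family $(M)_{i \in I}$, and the $\limsup$ along $\filter$ of a constant family equals that constant; hence copreservation gives $\phi^M = \limsup_{i \to \filter} \phi^M \leq \phi^{M^\filter} \leq r$, so $M \models \phi \leq r$. For cartesian factors, the product $\prod_{i \in I} M_i$ is precisely the reduced product by the trivial filter $\{I\}$, for which $\limsup_{i \to \filter}$ is just $\sup_{i \in I}$; copreservation then yields $\phi^{M_j} \leq \sup_{i \in I} \phi^{M_i} \leq \phi^{\prod_i M_i} \leq r$ for every $j \in I$, so each factor models $\phi \leq r$.

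I do not anticipate any real obstacle here: the whole argument rests on \ref{thm:preservation of Palyutin formulas} together with the elementary identities for $\limsup_{i \to \filter}$ recalled at the beginning of the section (its behaviour on constant families and on the trivial filter). The only point requiring a little care is noticing that reduced roots and cartesian factors, which are not literally reduced-product preservation statements, both reduce to the copreservation half via these degenerate cases of the $\limsup$.
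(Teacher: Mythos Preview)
Your proposal is correct and follows essentially the same approach as the paper: reduce to a single Palyutin condition $\phi \leq r$, invoke \ref{thm:preservation of Palyutin formulas}, and handle each of the three preservation properties in turn. The paper's proof is slightly terser (writing $\phi^M = \phi^{M^\filter}$ and $\sup_j \phi^{M_j} = \phi^{\prod_j M_j}$ as equalities from bipreservation rather than isolating the single inequality needed), but the logic is identical.
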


\begin{proof}
	Let $\phi \leq r$ be a Palyutin condition in $T$. If $M = (M_i)_{i \in I}$ is a family of structures and $\filter$ is a filter such that $\indices{T} \in \filter$, then $\indices{T} \subseteq \indices{\phi \leq r} \in \filter$ and thus $\phi^{M_{\filter}} \leq r$ by preservation of $\phi$. If $M$ is a structure and $\filter$ is a filter such that $M^{\filter} \models T$, then $\phi^M = \phi^{M^{\filter}} \leq r$. Finally, if $M = (M_i)_{i \in I}$ is a family of structures such that $\prod_i M_i \models T$, then for every $i \in I$, $\phi^{M_i} \leq \sup_j \phi^{M_j} = \phi^{\prod_j M_j} \leq r$.
\end{proof}

\begin{definition}
	Let $\SCP$ be the theory containing
	\[\sup_{y} \left( \inf_x \max\left(\phi, D_1 \psi_1, \dots, D_n \psi_n \right) - \max_{j = 1}^n \inf_x \max \left(\phi, D_j \psi_j \right) \right) \leq 0 ~,\]
for every $n \geq 2$, Palyutin formulas $\phi(x, y), \psi_1(x, y), \dots, \psi_n(x, y)$ and $D_1,\dots,D_n$ nondecreasing unary connectives.
\end{definition}

\begin{lemma}\label{lemma:pres SCP cont}
	The theory $\SCP$ is preserved under reduced products.
\end{lemma}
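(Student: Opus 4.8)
The plan is to fix a single axiom of $\SCP$ and show that it is preserved, which suffices: if $\indices{\SCP} \in \filter$ then for each axiom $\chi \leq 0$ we have $\indices{\chi \leq 0} \supseteq \indices{\SCP} \in \filter$, so preservation of the individual condition gives $M_\filter \models \chi \leq 0$. So fix Palyutin formulas $\phi, \psi_1, \dots, \psi_n$ and nondecreasing connectives $D_1, \dots, D_n$, and set
\[\chi \defeq \sup_y \left( \Xi - \max_{j=1}^n \Xi_j \right), \quad \Xi \defeq \inf_x \max(\phi, D_1\psi_1, \dots, D_n\psi_n), \quad \Xi_j \defeq \inf_x \max(\phi, D_j\psi_j) ~.\]
The first observation is that $\Xi$ and each $\Xi_j$ are themselves Palyutin formulas: each $D_j\psi_j$ is Palyutin because $D_j$ is nondecreasing, a finite $\max$ of Palyutin formulas is Palyutin, and so is an $\inf_x$. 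Hence by \ref{thm:preservation of Palyutin formulas} these formulas are all bipreserved under reduced products.

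Now let $M = (M_i)_{i \in I}$ be a family and $\filter$ a filter with $\indices{\SCP} \in \filter$. The key point I want to stress is that one cannot simply argue that $\chi$ itself is bipreserved, since the subtraction inside $\chi$ does not commute with $\limsup$. Instead, I would establish the pointwise inequality $\Xi^{M_\filter}(b_\filter) \leq \max_{j} \Xi_j^{M_\filter}(b_\filter)$ for every representative tuple $b = (b_i)_{i \in I} \in \prod_i M_i^y$. Using bipreservation of $\Xi$ and of each $\Xi_j$, one has $\Xi^{M_\filter}(b_\filter) = \limsup_{i \to \filter} \Xi^{M_i}(b_i)$ and $\Xi_j^{M_\filter}(b_\filter) = \limsup_{i \to \filter} \Xi_j^{M_i}(b_i)$; taking the maximum over $j$ and applying \ref{eq:prop of limsup} (that $\limsup$ commutes with $\max$) yields $\max_j \Xi_j^{M_\filter}(b_\filter) = \limsup_{i \to \filter} \max_j \Xi_j^{M_i}(b_i)$.

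For every $i \in \indices{\SCP}$ the structure $M_i$ satisfies the axiom, so $\Xi^{M_i}(b_i) \leq \max_j \Xi_j^{M_i}(b_i)$. This inequality holds on the filter-large set $\indices{\SCP}$, and $\limsup_{i \to \filter}$ is monotone for inequalities valid on a set of $\filter$ — as one checks directly from $\limsup_{i \to \filter} u_i = \inf_{J \in \filter} \sup_{i \in J} u_i$ by intersecting each $J$ with $\indices{\SCP}$, which ranges over a cofinal subfamily of $\filter$. Hence $\limsup_{i \to \filter} \Xi^{M_i}(b_i) \leq \limsup_{i \to \filter} \max_j \Xi_j^{M_i}(b_i)$, that is $\Xi^{M_\filter}(b_\filter) \leq \max_j \Xi_j^{M_\filter}(b_\filter)$.

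Finally, $\{b_\filter : b \in \prod_i M_i^y\}$ is dense in $M_\filter^y$ and the map $b_\filter \mapsto (\Xi - \max_j \Xi_j)^{M_\filter}(b_\filter)$ is uniformly continuous, so the bound on this dense set propagates to all of $M_\filter^y$, giving $\chi^{M_\filter} = \sup_y(\Xi - \max_j \Xi_j)^{M_\filter} \leq 0$. The only genuinely delicate points are the reduction to a \emph{pointwise} statement comparing the two bipreserved Palyutin formulas $\Xi$ and $\max_j \Xi_j$ (thereby avoiding any false claim that $\chi$ is bipreserved), and the bookkeeping required to push the local inequality through $\limsup_{i \to \filter}$ using that $\indices{\SCP} \in \filter$.
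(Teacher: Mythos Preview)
Your argument is correct and takes a genuinely different, more economical route than the paper. The key observation you exploit is that, with the $D_j$ nondecreasing (as in the formal definition of $\SCP$ in Section~3), each $D_j\psi_j$ is itself Palyutin, hence so are the composite formulas $\Xi = \inf_x \max(\phi, D_1\psi_1,\dots,D_n\psi_n)$ and $\max_j \Xi_j$. Once this is noticed, \cref{thm:preservation of Palyutin formulas} gives $\Xi^{M_\filter}(b_\filter) = \limsup_i \Xi^{M_i}(b_i)$ and $\max_j \Xi_j^{M_\filter}(b_\filter) = \limsup_i \max_j \Xi_j^{M_i}(b_i)$, and the local $\SCP$ inequality transfers by monotonicity of $\limsup$ on a filter-large set. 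The paper instead never identifies $\Xi$ and $\Xi_j$ as Palyutin: it fixes $r = \max_j \Xi_j^{M_\filter}(b_\filter)$ and, for each $\epsilon>0$, builds an explicit witness $a\in\prod_i M_i^x$ by choosing $a_i$ in each factor via the local $\SCP$ axiom, then argues set-by-set about which index sets lie in $\filter$, using bipreservation only of the \emph{ingredient} formulas $\phi,\psi_k$.

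Your approach is shorter and more conceptual. The paper's hands-on construction buys a bit more robustness: if one reads $\SCP$ with \emph{nonincreasing} $D_j$ (as in the introduction's statement, which disagrees with the definition in Section~3), then $D_j\psi_j$ is no longer Palyutin and your shortcut collapses, whereas the element-level argument in the paper still goes through by tracking preservation and copreservation of $\psi_k$ separately across the monotone $D_k$. Under the official definition, however, your proof is a clean simplification.
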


\begin{proof}
	Let $\filter$ be a filter on $I$, and $(M_i)_{i \in I}$ a family of $\signature$-structures with $M_i \models \SCP$ for every $i \in I$. Let $\phi(x,y), \psi_1(x, y),\dots, \psi_n(x, y)$ be Palyutin formulas, $D_1,\dots,D_j$ nondecreasing unary connectives and $b \in \prod_i M_i^y$. Define
	\[r \defeq \max_{j = 1}^n \inf_x \max \left( \phi^{M_\filter}(x,b_\filter), D_j \psi^{M_\filter}_j(x,b_\filter) \right) ~.\]
	Note that $\inf_x \phi^{M_\filter}(x,b_\filter) \leq r$ so $\indices{\inf_x \phi(x,b) < r + \epsilon} \in \filter$ for every $\epsilon > 0$ by co-preservation of $\phi$. Fix $\epsilon > 0$ and define for $i \in I$
	\[J(i) \defeq \left\{ j \in \{1,\dots,n\} : \inf_x \max \left( \phi^{M_i}(x,b_i), D_j \psi^{M_i}_j(x,b_i) \right) < r + \epsilon\right\} ~.\]
	We can now find $a$ such that $\max(\phi^{M_i}(a_i,b_i), \max_{j \in J(i)} D_j \psi_j^{M_i}(a_i, b_i)) < r + \epsilon$\footnote{By convention, $\max_{\emptyset} = - \infty$.} for every $i \in \indices{\inf_x \phi(x,b) < r + \epsilon}$. By preservation of $\phi$, $\phi^{M_\filter}(a_\filter, b_\filter) \leq r + \epsilon$. Now fix $k \in \{1,\dots,n\}$. We claim that $X_k \defeq \indices{\inf_x \max( \phi(x,b), D_k \psi_k(x,b)) \geq r + \epsilon} \not \in \filter$. If not, then for every $c \in \prod_i M_i$ such that $M_\filter \models \max( \phi(c_\filter, b_\filter), D_k \psi_k(c_\filter, b_\filter)) < r + \epsilon$, we would have
	\[\llbracket D_k \psi_k(c, b) \geq r + \epsilon \rrbracket \supseteq \llbracket \varphi(x,b) < r + \epsilon \rrbracket \cap X_k \in \filter ~,\]
	so by preservation of $\psi_k$, $M_\filter \models D_k \psi_k(c_\filter, b_\filter) \geq r + \epsilon$. Thus, we would have
	\[M_\filter \models \inf_x \max \left( \phi^{M_\filter}(x, b_\filter), D_k \psi^{M_\filter}_k(x, b_\filter) \right) \geq r + \epsilon\]
	which contradicts our assumption, hence $X_k \not \in \filter$. By construction, we have
	\[k \not\in J(i) \text{ for every $i \in \indices{D_k \psi_k(a,b) \geq r + \epsilon} \cap \indices{\inf_x \phi(x, b) < r + \epsilon}$,}\]
	hence
	\[\indices{D_k \psi_k(a, b) \geq r + \epsilon} \cap \indices{\inf_x \phi(x, b) < r + \epsilon} \subseteq X_k \not \in \filter ~.\]
	 We deduce that $\indices{D_k \psi_k(a, b) > r + \epsilon} \not \in \filter$. By copreservation of $\psi_k$, we know that $D_k \psi_k^{M_\filter}(a_\filter, b_\filter) \leq r + \epsilon$. We thus have :
	\begin{equation*}
	M_\filter \models \inf_x \max \left( \phi(x, b_\filter), \max_{j = 1}^n D_j \psi_j(x, b_\filter \right) \leq r ~.\qedhere
	\end{equation*}
\end{proof}

Recall that the Fréchet filter on $\omega$ is the one consisting of all cofinite subsets of $\omega$.

\begin{lemma}\label{lemma:atomless boolean algebra SCP cont}
	Let $(M_i)_{i \in I}$ be a family of $\signature$-structures and $\filter$ a filter. If $\{0,1\}^{\filter}$ is an atomless boolean algebra, then $M_\filter \models \SCP$. In particular, it holds when $\filter$ is the Fréchet filter on $\omega$.
\end{lemma}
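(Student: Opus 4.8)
The plan is to check each axiom of $\SCP$ pointwise in $M_\filter$, turning it into a statement about $\filter$-positive sets that is then settled by the atomlessness of $\{0,1\}^\filter$. Fix Palyutin formulas $\phi,\psi_1,\dots,\psi_n$ and the connectives $D_1,\dots,D_n$ as in the definition of $\SCP$, fix $b_\filter\in M_\filter^y$ with a representative $b=(b_i)_i\in\prod_i M_i^y$, and put $r\defeq\max_{j=1}^n \inf_x\max(\phi,D_j\psi_j)^{M_\filter}(b_\filter)$. It suffices to prove $\inf_x\max(\phi,D_1\psi_1,\dots,D_n\psi_n)^{M_\filter}(b_\filter)\le r$, since this for every $b_\filter$ is exactly the corresponding axiom. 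Exactly as in the proof of \ref{lemma:pres SCP cont}, I will use that $\phi$ and the $\psi_j$ are bipreserved (\ref{thm:preservation of Palyutin formulas}) and that each $D_j\psi_j$ inherits from this the two filtered behaviours already exploited there: $\indices{D_j\psi_j\ge c}\in\filter$ forces $D_j\psi_j^{M_\filter}\ge c$, whereas $\indices{D_j\psi_j\le c}$ being $\filter$-positive forces $D_j\psi_j^{M_\filter}\le c$. Here I call $S\subseteq I$ \emph{$\filter$-positive} when $I\setminus S\notin\filter$.

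Fix $\epsilon>0$. For each $j$, since $\inf_x\max(\phi,D_j\psi_j)^{M_\filter}(b_\filter)\le r$ and the image of $\prod_i M_i$ is dense in $M_\filter$, I can choose a witness $a^j\in\prod_i M_i^x$ with $\phi^{M_\filter}(a^j_\filter,b_\filter)<r+\epsilon$ and $D_j\psi_j^{M_\filter}(a^j_\filter,b_\filter)<r+\epsilon$. Copreservation of $\phi$ gives $A_j\defeq\indices{\phi(a^j,b)<r+\epsilon}\in\filter$, and the behaviour of $D_j\psi_j$ recalled above gives that $B_j\defeq\indices{D_j\psi_j(a^j,b)<r+\epsilon}$ is $\filter$-positive. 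The idea is then to splice the $a^j$ into a single witness $a$ that is good for $\phi$ on a $\filter$-large set and good for each $D_j\psi_j$ on a $\filter$-positive set simultaneously.

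The heart of the argument, and the step I expect to be the main obstacle, is the splicing made possible by atomlessness: the classes $[B_1],\dots,[B_n]$ are nonzero in $\{0,1\}^\filter$, and I claim one can find pairwise disjoint $\filter$-positive sets $Z_1,\dots,Z_n$ with $Z_j\subseteq B_j$. To produce these I would pass to the finite subalgebra of $\{0,1\}^\filter$ generated by $[B_1],\dots,[B_n]$, split each of its (finitely many, nonzero) atoms into enough pairwise disjoint nonzero pieces using atomlessness, and place one such piece below each $[B_j]$; lifting to $I$ and shrinking the representatives inside $B_j$ yields genuinely disjoint $\filter$-positive $Z_j\subseteq B_j$. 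I then define $a\in\prod_i M_i^x$ by $a_i\defeq a^j_i$ for $i\in Z_j$, and $a_i\defeq a^1_i$ for $i\notin\bigcup_j Z_j$. The delicate bookkeeping here is precisely keeping track of which sets must be $\filter$-large (for $\phi$) versus merely $\filter$-positive (for the $D_j\psi_j$), and ensuring the disjointness needed to make $a$ a single function.

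It remains to bound $\max(\phi,D_1\psi_1,\dots,D_n\psi_n)^{M_\filter}(a_\filter,b_\filter)$. On the $\filter$-large set $A\defeq\bigcap_{j=1}^n A_j$, whatever index $j$ satisfies $i\in Z_j$ also satisfies $i\in A_j$, so $\phi^{M_i}(a_i,b_i)<r+\epsilon$ there; by preservation of $\phi$ this yields $\phi^{M_\filter}(a_\filter,b_\filter)\le r+\epsilon$. For each $j$, on $Z_j\subseteq B_j$ one has $D_j\psi_j^{M_i}(a_i,b_i)<r+\epsilon$ and $Z_j$ is $\filter$-positive, so the copreservation behaviour of $D_j\psi_j$ gives $D_j\psi_j^{M_\filter}(a_\filter,b_\filter)\le r+\epsilon$. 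Hence $\inf_x\max(\phi,D_1\psi_1,\dots,D_n\psi_n)^{M_\filter}(b_\filter)\le r+\epsilon$; letting $\epsilon\to 0$ and then letting $b_\filter$ range over $M_\filter^y$ gives the axiom, so $M_\filter\models\SCP$. Finally, for the Fréchet filter the algebra $\{0,1\}^{\frechet}\cong\mathcal{P}(\omega)/\mathrm{Fin}$ is atomless, since any infinite subset of $\omega$ splits into two infinite subsets, so the last assertion follows as a special case.
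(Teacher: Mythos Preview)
Your proof is correct and follows essentially the same route as the paper's: both reduce the $\SCP$ axiom at a fixed $b_\filter$ and $\epsilon>0$ to producing pairwise disjoint $\filter$-positive sets below certain positive classes in $\{0,1\}^\filter$, then splice a single witness $a$ from these pieces and conclude via (co)preservation of $\phi$ and the $\psi_j$. The only organizational difference is that you first choose global witnesses $a^j\in\prod_i M_i$ and then extract the index sets $A_j,B_j$ from their bipreservation, whereas the paper first defines the index sets $W_k=\indices{\inf_x\max(\phi,D_k\psi_k)<r+\epsilon}$ and $U=\indices{\inf_x\phi<r+\epsilon}$ directly and picks the witnesses $a_i$ pointwise afterward; this is a cosmetic rearrangement of the same argument, and your disjointification of the $[B_j]$ is exactly the paper's \ref{lemma:atomless subsets}.
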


\begin{proof}
We will use the following fact.
	\begin{fact}\label{lemma:atomless subsets}
		If $\{0,1\}^\filter$ is atomless and $W_1,\dots,W_n \in 2^I$ are such that $W_1,\dots,W_n > 0$ in $\mathbf \{0,1\}^\filter$, then we can find $X_i \subseteq W_i$ such that for all $i \in \{1 ,\dots, n\}$:
		\begin{itemize}
			\item $X_i > 0$ in $\{0,1\}^\filter$;
			\item $X_i \cap X_j = \emptyset$ if $i \neq j$.
		\end{itemize}
	\end{fact}
	
	Assume that $\{0,1\}^\filter$ is atomless. Let $\phi(x,y), \psi_1(x,y), \dots, \psi_n(x, y)$ be Palyutin formulas, and $D_1,\dots, D_k$ be nondecreasing unary connectives. We define
	\[r \defeq \max_{j = 1}^n \inf_x \max \left( \phi^{M_\filter}(x,b), D_j \psi^{M_\filter}_j(x,b) \right) ~.\]
	
	Let $\epsilon > 0$, $U \defeq \indices{\inf_x \phi(x,b) < r + \epsilon}$, and :
	\[W_k \defeq \indices{\inf_x \max (\varphi(x,b), D_k \psi_k(x,b)) < r + \epsilon}\]
	Necessarily, $I \setminus W_k = \indices{\inf_x \max (\varphi(x,b), D_k \psi_k(x,b)) \geq r + \epsilon} \not\in \filter$ as in the proof of \cref{lemma:pres SCP cont}. It is also clear that $U \in \filter$ since $\inf_x \phi(x,b)$ is copreserved. We can now apply \cref{lemma:atomless subsets} to find $X_k \subseteq W_k$ pairwise disjoint such that $I \setminus X_k \not\in \filter$ for every $k$. We can now define $a \in M_i^x$ such that:
	\begin{itemize}
		\item $\max \left( \varphi^{M_i}(a_i, b) , D_k \psi_k^{M_i}(a_i, b) \right) < r + \epsilon$ if $i \in X_k$;
		\item $\phi^{M_i}(a_i, b) < r + \epsilon$ if $i \in U \setminus (X_1 \cup \dots \cup X_k)$.
	\end{itemize}
	It is easy to see that $\indices{\varphi(a,b) \leq r + \epsilon} \supseteq U \in \filter$ and $\indices{D_k \psi_k(a,b) > r + \epsilon} \subseteq I \setminus X_k \not\in \filter$ for every $k$ so by preservation of $\phi$ and copreservation of $\psi_k$ we get
	\[M_\filter \models \max \left(\varphi(a, b), \max_{j = 1}^n D_j \psi_j(a, b) \right) \leq r + \epsilon ~,\]
	we thus have
		\begin{equation*}
		M_\filter \models \inf_x \max \left(\varphi(x,b), \max_{j = 1}^n D_j \psi_j(x, b) \right) \leq r ~.\qedhere
		\end{equation*}
\end{proof}

Let $\mathcal B := \{C : \real \to \real : n < \omega, C \text{ affine}\} \cup \{\max, \min\}$. We call $\mathcal B$-combination of Palyutin formulas any formula which can be built using only Palyutin formulas, and connectives in $\mathcal B$. We will prove that in the theory $\SCP$, $\mathcal B$-combination of Palyutin formulas are dense in the set of continuous formulas. First, we prove the following lemma which is some sort of quantifier elimination for $\mathcal B$-combination of Palyutin formulas.

\begin{lemma}\label{lemma:approx EB-combination}
	If $\phi(x,y)$ is a $\mathcal B$-combination of Palyutin formulas and $\epsilon > 0$, then there exists a $\mathcal B$-combination of Palyutin formulas $\psi(x)$ such that :
	\[\SCP \models \sup_{x} |\inf_y \phi - \psi| \leq \epsilon\]
\end{lemma}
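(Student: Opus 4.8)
The plan is to put $\phi$ into a normal form and then eliminate the quantifier $\inf_y$ by commuting it past the outermost connectives, using $\SCP$ only where it is genuinely needed. First I would record the two consequences of $\SCP$ that drive the argument. Instantiating the defining axioms of $\SCP$ with each $D_j$ the identity and $\phi$ a constant below every relevant lower bound (a constant is a Palyutin formula, being a constant connective applied to an atomic formula), and combining with the trivial reverse inequality, gives
\[\SCP \models \inf_x \max_{j=1}^n \eta_j = \max_{j=1}^n \inf_x \eta_j\]
for all Palyutin formulas $\eta_1,\dots,\eta_n$; that is, modulo $\SCP$ the operator $\inf$ commutes with finite maxima of \emph{Palyutin} formulas. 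The second ingredient is elementary and needs no hypothesis: $\inf$ always commutes with $\min$, and any continuous unary connective applied to a Palyutin formula is uniformly approximable by a $\mathcal B$-combination of Palyutin formulas (approximate the connective by a piecewise-linear one, i.e.\ a $\max$-$\min$ of affine maps). Since $\sup_y \phi = -\inf_y(-\phi)$ and $-\phi$ is again a $\mathcal B$-combination, it suffices throughout to treat $\inf_y$, the statement for $\sup_y$ following by negation.

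Next I would establish the normal form: every $\mathcal B$-combination of Palyutin formulas equals a formula $\min_p \max_q \chi_{pq}$ in which each $\chi_{pq} = a_{pq}\,\theta_{pq} + b_{pq}$ is an affine image of a single Palyutin formula $\theta_{pq}$. Closure of this shape under the operations of $\mathcal B$ is routine: an affine map with nonnegative slope distributes over $\max$ and $\min$, one with negative slope distributes after swapping them, and distributivity of $\max$ over $\min$ reabsorbs the binary connectives. Crucially, a term $\chi_{pq}$ with $a_{pq} \geq 0$ is itself a Palyutin formula, whereas one with $a_{pq} < 0$ has the form $D\,\theta_{pq}$ for a nonincreasing connective $D$.

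Given the normal form, since $\inf$ commutes with $\min$ we have $\inf_y \min_p \max_q \chi_{pq} = \min_p \inf_y \max_q \chi_{pq}$, so it remains to approximate each $\inf_y \max_q \chi_{pq}$. Separating the terms of slope $\geq 0$ (Palyutin formulas $\eta_q$) from those of slope $<0$ (of the form $D_q \theta_q$), this equals $\inf_y \max(E,N)$ with $E = \max_q \eta_q$ Palyutin and $N = \max_q D_q\theta_q$. If there are no negative terms, the displayed $\SCP$-identity gives $\inf_y \max_q \eta_q = \max_q \inf_y \eta_q$, a $\mathcal B$-combination of the Palyutin formulas $\inf_y \eta_q$, and we are done.

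The main obstacle is the mixed case, where nonincreasing terms are present, since $\SCP$ lets $\inf$ pass a maximum only of genuinely Palyutin formulas. This is the continuous form of an existential quantifier conjoined with the ``negative'' conditions $D_q\theta_q$, and it is exactly what the $h$-operation blocks $\sup_y \min(D\theta, \Delta, \psi)$ built into \ref{def:Palyutin formulas} are designed to capture. After rewriting via $\max(s,t) = -\min(-s,-t)$, one exhibits $\inf_y \max(E,N)$ as an affine image of a $\sup_y$ of a $\min$ whose arguments involve the nonincreasing connectives $D_q$, matching the shape of these blocks; the fixed-point cap $\Delta$ is indispensable here, as it is what makes such a block a legitimate Palyutin formula. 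Because the cap is tied to the fixed point of its connective, a single block only records values on one side of $\Delta$, so to recover the whole bounded range within $\epsilon$ one combines finitely many blocks whose connectives have fixed points forming an $\epsilon$-net of the range of the formulas involved; this is where boundedness of the structures and the tolerance $\epsilon$ in the statement are essential. Propagating the error through the finitely many affine maps (each scaling it by its slope, which is absorbed by running the construction with a correspondingly smaller tolerance) then produces a $\mathcal B$-combination $\psi(x)$ with $\SCP \models \sup_x |\inf_y \phi - \psi| \leq \epsilon$.
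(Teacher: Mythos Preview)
Your overall plan—normal form, commute $\inf_y$ past the outer $\min$, then treat each $\inf_y \max_q \chi_{pq}$—matches the paper. But two points deserve correction.

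The minor one: your instantiation of the $\SCP$ axioms with $D_j$ the identity is illegitimate, since those axioms require \emph{nonincreasing} connectives. Fortunately this case is moot: when all slopes are nonnegative, $\max_q \eta_q$ is itself Palyutin (closure under $\max$), so $\inf_y \max_q \eta_q$ is already a Palyutin formula and no $\SCP$ is needed.

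The genuine gap is in the mixed case. You propose to approximate $\inf_y \max(E, D_1\theta_1,\dots,D_n\theta_n)$ directly by h-operation blocks, but the h-operation $\max(\inf_x \phi, \sup_x \min(D\phi,\Delta,\psi))$ admits exactly one nonincreasing connective $D$, and the remaining slot $\psi$ must be a single Palyutin formula. After your rewriting, the inner $\min$ contains the several terms $d_i D_1\theta_1,\dots,d_i D_n\theta_n$ (each Palyutin, since $d_i \circ D_j$ is nondecreasing), but $\min$ of Palyutin formulas is not Palyutin, so for $n\geq 2$ this does not match the shape of any Palyutin block. Your $\epsilon$-net of fixed points does nothing to address the multiplicity of the $D_j\theta_j$.

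This is precisely where $\SCP$ enters in the paper's argument: the $\SCP$ axiom with $\phi=\theta$ and the given $D_j,\gamma_j$ yields the exact identity
\[
\inf_y \max(\theta, D_1\gamma_1,\dots,D_n\gamma_n)=\max_{j} \inf_y \max(\theta, D_j\gamma_j),
\]
reducing to a single nonincreasing term. Only after this reduction does the paper carry out the level-set construction you sketch, building explicit Palyutin formulas $\psi_i$ from h-blocks with connectives $d_i(t)=2\rho_i-t$ and then invoking \ref{lemma:approximation by B-combination}. In short, you have the right approximation idea but apply it one step too early; the $\SCP$ reduction to $n=1$ is not optional.
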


\begin{proof}[Proof of \ref{lemma:approx EB-combination}]
	First, we may assume that $\phi$ is of the form :
	$$\phi(x,y) = \min_{i = 1}^n \max \left( \theta_i, D_{i, 1} \gamma_{i,1}, \dots, D_{i, l_i} \gamma_{i, l_i} \right)$$
	where $\theta_i$'s, $\gamma_{i,j}$'s are Palyutin formulas and $D_{i,j}$'s are nondecreasing unary connectives. It is then clear that :
	\[\SCP \models \sup_{\bar x} |\inf_y \phi - \min_{i = 1}^n \max_{j = 1}^{l_i} \inf_{y} \max (\theta_i, D_{i, j} \gamma_{i,j}) | = 0 \]
	Thus, it suffices to show that if $\theta, \gamma$ are Palyutin formulas and $D$ is a nondecreasing unary connective, then there exists some $\mathcal B$-combination of Palyutin formulas $\psi$ such that :
	\begin{equation*}\label{eq:approx inf theta D gamma}
	\|\inf_{y} \max \left( \theta, D \gamma \right) - \psi \| \leq \epsilon
	\end{equation*} 
	Take $r_0 < r_1 < \dots < r_k$ such that $\inf_{y} \max \left( \theta, D \gamma \right)$ is bounded by $r_0$ and $r_k$ and $|r_{i + 1} - r_i| = \epsilon$ for every $i$. Define $\rho_i := r_i + \epsilon / 2$, $d_i : t \mapsto 2 \rho_i - t$ and $\lambda := \epsilon / 6$. Note that the fixed point of $d_i$ is $\rho_i$ and $\rho_i = r_i + 3 \lambda$. Finally, define :
	\begin{align*}
	\psi_i \defeq & \max \left( \inf_y \theta - r_i,
	d_i \left( \max \left( \inf_y \theta, \sup_y \min \left( d_i \theta, d_i D \gamma, \rho_i \right) \right) \right) - \rho_i
	\right) \\
	= & \max \left( \inf_y \theta - r_i,
	\min \left( \rho_i - \inf_y \theta, \inf_y \max \left( \theta, D \gamma, \rho_i \right) - \rho_i \right)
	\right)
	\end{align*}
	$\psi_i$ is a $\mathcal B$-combination of Palyutin formulas. For all $i$, we have on one hand
	\[\inf_{y} \max \left( \theta, D \gamma \right) \leq r_i \models \psi_i \leq 0 ~,\]
	and on the other hand
	\[\psi_i \leq \lambda \models \left(\inf_{y} \max \left( \theta, D \gamma \right) \leq r_i + 2 \epsilon / 3 \right) ~.\]
	Now we conclude using \ref{lemma:approximation by B-combination}.
\end{proof}

\begin{lemma}\label{lemma:SCP B-combination cont}
	For every first-order formula $\phi(x)$ and $\epsilon > 0$, there exists a $\mathcal B$-combination of Palyutin formulas $\psi(x)$ such that $\SCP \models \sup_{x} \left| \phi - \psi \right| \leq \epsilon$.
\end{lemma}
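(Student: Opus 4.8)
The plan is to prove the statement by induction on the structure of the continuous formula $\phi$, approximating each subformula by a $\mathcal B$-combination of Palyutin formulas modulo $\SCP$. The base case will be immediate: an atomic formula is a Palyutin formula by \ref{def:Palyutin formulas}, hence a $\mathcal B$-combination of Palyutin formulas, so one may take $\psi = \phi$.

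For the connective step, suppose $\phi = u(\phi_1, \dots, \phi_n)$ for a continuous connective $u \colon \real^n \to \real$ and subformulas $\phi_1, \dots, \phi_n$ to which the inductive hypothesis applies. Since formulas are bounded, the tuple $(\phi_1, \dots, \phi_n)$ takes values in a fixed compact box $K \subseteq \real^n$. The affine maps separate points of $\real^n$, so the sublattice of $C(K)$ generated by affine maps under $\max$ and $\min$ --- which is exactly the class of functions obtained by composing connectives from $\mathcal B$ --- is dense, by the \emph{lattice} form of the Stone--Weierstrass theorem. I would thus pick a $\mathcal B$-connective $v$ with $\sup_K |u - v| \leq \epsilon/2$, and then use the inductive hypothesis together with the uniform continuity of $v$ on a slightly enlarged box to choose $\mathcal B$-combinations $\psi_i$ of the $\phi_i$ close enough that $\SCP \models \sup_x |u(\phi_1,\dots,\phi_n) - v(\psi_1, \dots, \psi_n)| \leq \epsilon$. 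As $v(\psi_1, \dots, \psi_n)$ is again a $\mathcal B$-combination of Palyutin formulas, this closes the connective case.

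The quantifier step is where \ref{lemma:approx EB-combination} intervenes, and is essentially already done. For $\phi = \inf_y \phi'$, the inductive hypothesis yields a $\mathcal B$-combination $\chi$ of Palyutin formulas with $\SCP \models \sup_{x,y} |\phi' - \chi| \leq \epsilon/2$; since $\inf_y$ is nonexpansive for the sup-norm, this gives $\SCP \models \sup_x |\inf_y \phi' - \inf_y \chi| \leq \epsilon/2$. Applying \ref{lemma:approx EB-combination} to $\chi$ then produces a $\mathcal B$-combination of Palyutin formulas $\psi$ with $\SCP \models \sup_x |\inf_y \chi - \psi| \leq \epsilon/2$, and the triangle inequality concludes. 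The case $\phi = \sup_y \phi'$ reduces to the previous one via $\sup_y \phi' = -\inf_y(-\phi')$, since $t \mapsto -t$ lies in $\mathcal B$, so that $-\phi'$ and $-\psi$ remain $\mathcal B$-combinations.

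The main obstacle is confined to the connective step: one must verify that the connectives in $\mathcal B$ generate, under composition, a dense class of continuous connectives via lattice Stone--Weierstrass, and then propagate the three error terms (the Stone--Weierstrass error, the inductive approximation error for each $\phi_i$, and the modulus of continuity of $v$) carefully so that their sum stays below $\epsilon$. Once \ref{lemma:approx EB-combination} is granted, the quantifier step carries no real difficulty, so the whole argument is a clean structural induction.
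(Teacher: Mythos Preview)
Your proposal is correct and follows essentially the same approach as the paper. The only organizational difference is that the paper first invokes lattice Stone--Weierstrass \emph{globally} to reduce an arbitrary formula to a $\mathcal B$-formula (so that the connective step of the subsequent induction becomes trivial), whereas you induct directly on arbitrary formulas and invoke Stone--Weierstrass \emph{locally} at each connective step; in both cases the quantifier step is handled by \ref{lemma:approx EB-combination}, which is the only substantive ingredient.
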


\begin{proof}
	By the lattice version of Stone-Weierstrass's Theorem, every first-order formula is approximable by $\mathcal B$-formulas i.e.\ formulas built only with connectives in $\mathcal B$ and quantifiers (see Section 6 of \cite{benyaacovModelTheoryMetric2008} for more details on density of restricted formulas). We might thus assume that $\phi$ is a $\mathcal B$-formula. The proof is now an easy induction on the construction of $\mathcal B$-formulas where we use \cref{lemma:approx EB-combination} for the case of quantifiers.
\end{proof}

\begin{corollary}\label{lemma:SCP completeness cont}
	Suppose that $T$ is a Palyutin-complete theory i.e. that for every Palyutin sentence $\phi$, $T \models \phi = r$ for some $r \in \real$. Then $T \cup \SCP$ is a complete theory.
\end{corollary}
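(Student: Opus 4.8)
The goal is to show that if $T$ is Palyutin-complete, then $T \cup \SCP$ is complete. The strategy mirrors the classical \cref{lemma:SCP completeness}, now using the continuous machinery developed in this section. The plan is to show that any two models $M, N \models T \cup \SCP$ agree on every continuous sentence $\phi$, i.e.\ $\phi^M = \phi^N$.

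First I would fix models $M, N \models T \cup \SCP$ and an arbitrary continuous sentence $\phi$, together with some $\epsilon > 0$. By \cref{lemma:SCP B-combination cont}, there exists a $\mathcal B$-combination of Palyutin sentences $\psi$ such that $\SCP \models |\phi - \psi| \leq \epsilon$; since both $M$ and $N$ are models of $\SCP$, we get $|\phi^M - \psi^M| \leq \epsilon$ and $|\phi^N - \psi^N| \leq \epsilon$. Thus it suffices to control $\psi^M - \psi^N$. The sentence $\psi$ is built from finitely many Palyutin sentences $\chi_1, \dots, \chi_m$ using only affine connectives, $\max$ and $\min$.

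The key step is then to observe that Palyutin-completeness of $T$ pins down the value of each $\chi_\ell$ in all models of $T$: by hypothesis, for each Palyutin sentence $\chi_\ell$ there is a real $r_\ell$ with $T \models \chi_\ell = r_\ell$, so $\chi_\ell^M = \chi_\ell^N = r_\ell$. Since $\psi$ is obtained by applying a fixed continuous connective (a combination of affine maps, $\max$ and $\min$) to the tuple $(\chi_1, \dots, \chi_m)$, and this connective depends only on $\psi$ and not on the model, we obtain $\psi^M = \psi^N$. Combining with the two approximation inequalities gives $|\phi^M - \phi^N| \leq 2\epsilon$, and letting $\epsilon \to 0$ yields $\phi^M = \phi^N$. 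As $\phi$ was arbitrary, $M \equiv N$, so $T \cup \SCP$ is complete.

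The main subtlety to get right is the reduction in \cref{lemma:SCP B-combination cont}: one must ensure that the $\mathcal B$-combination $\psi$ genuinely has no free variables when $\phi$ is a sentence, so that the constituent Palyutin formulas $\chi_\ell$ are themselves Palyutin \emph{sentences} to which Palyutin-completeness applies. This is automatic because the inductive construction in that lemma preserves the free variables, but it is the point that makes Palyutin-completeness rather than mere completeness the right hypothesis. The rest is routine bookkeeping with the $\epsilon$-approximation and the model-independence of the connective defining $\psi$.
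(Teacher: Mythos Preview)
Your proof is correct and follows essentially the same approach as the paper: approximate an arbitrary sentence $\phi$ by a $\mathcal B$-combination $\psi$ of Palyutin sentences using \cref{lemma:SCP B-combination cont}, use Palyutin-completeness to get $\psi^M = \psi^N$, and conclude via the triangle inequality that $|\phi^M - \phi^N| \leq 2\epsilon$. Your additional remarks on why $\psi^M = \psi^N$ and on the free-variable issue are sound elaborations of points the paper leaves implicit.
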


\begin{proof}
	Take any first-order formula $\phi$, $M, N \models T \cup \SCP$. From \cref{lemma:SCP B-combination cont}, we get a $\mathcal B$-combination of formulas $\psi$ such that $\SCP \models |\phi - \psi| \leq \epsilon$. Clearly, $\psi^M = \psi^N$ since $T$ is Palyutin-complete, but then :
	$$|\phi^M - \phi^N| \leq |\phi^M - \psi^M| + |\psi^M - \psi^N| + |\psi^N - \phi^N| \leq 2 \epsilon$$
	Thus, $\phi^M = \phi^N$.
\end{proof}

We now write $M \equiv_{\Pal} N$ when $\phi^M = \phi^N$ for every Palyutin sentence $\phi$. Recall that $\frechet$ denotes the Fréchet filter on $\omega$. The following can be seen as an analogue of Keisler-Shelah's Theorem for Palyutin theories, and its proof is identical to the other one established in \ref{lemma:Palyutin equivalence}.

\begin{theorem}\label{lemma:Palyutin equivalence cont}
	Let $M$ and $N$ be two $\signature$-structures. The following are equivalent :
	\begin{enumerate}[(i)]
		\item $M \equiv_{\Pal} N$;
		\item $M^{\frechet} \equiv N^{\frechet}$;
		\item $M^\filter \simeq N^\filter$ for some filter $\filter$.
	\end{enumerate}
\end{theorem}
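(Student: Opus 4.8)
The plan is to prove the cycle (i) $\Rightarrow$ (ii) $\Rightarrow$ (iii) $\Rightarrow$ (i), following the classical argument of \ref{lemma:Palyutin equivalence} step by step and substituting the continuous version of each ingredient established above.

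For (i) $\Rightarrow$ (ii), I would start by observing that bipreservation (\ref{thm:preservation of Palyutin formulas}) gives, for every Palyutin sentence $\phi$, the equality $\phi^{M^{\frechet}} = \limsup_{n \to \frechet} \phi^M = \phi^M$ since the family is constant; hence $M \equiv_{\Pal} M^{\frechet}$ and likewise $N \equiv_{\Pal} N^{\frechet}$, so that $M^{\frechet} \equiv_{\Pal} N^{\frechet}$. Let $T$ be the common Palyutin theory, i.e.\ the set of conditions $\phi = r$ with $\phi$ a Palyutin sentence and $\phi^{M^{\frechet}} = r$; by construction $T$ is Palyutin-complete and both $M^{\frechet}$ and $N^{\frechet}$ are models of it. Crucially, by \ref{lemma:atomless boolean algebra SCP cont} we have $M^{\frechet}, N^{\frechet} \models \SCP$ because $\{0,1\}^{\frechet}$ is atomless. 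Then \ref{lemma:SCP completeness cont} applies to tell us that $T \cup \SCP$ is a complete theory, whence $M^{\frechet} \equiv N^{\frechet}$, which is (ii). This is the step carrying the genuine content, and the reason the $\SCP$ machinery was developed: it is exactly what upgrades Palyutin-equivalence of the two Fréchet powers to full elementary equivalence.

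The remaining implications are soft. For (ii) $\Rightarrow$ (iii), the metric Keisler-Shelah theorem \cite{benyaacovModelTheoryMetric2008} yields an ultrafilter $\ultrafilter$ with $(M^{\frechet})^{\ultrafilter} \simeq (N^{\frechet})^{\ultrafilter}$; identifying an iterated reduced power with the reduced power by the Fubini product filter, i.e.\ $(M^{\frechet})^{\ultrafilter} \simeq M^{\frechet \otimes \ultrafilter}$ and similarly for $N$, one obtains (iii) with the filter $\filter = \frechet \otimes \ultrafilter$. For (iii) $\Rightarrow$ (i), an isomorphism preserves the value of every sentence, so $M^{\filter} \simeq N^{\filter}$ forces $M^{\filter} \equiv_{\Pal} N^{\filter}$; bipreservation again gives $M \equiv_{\Pal} M^{\filter}$ and $N \equiv_{\Pal} N^{\filter}$, whence $M \equiv_{\Pal} N$.

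I expect the main obstacle to be expository rather than deep: the one point genuinely requiring care is the identification $(M^{\frechet})^{\ultrafilter} \simeq M^{\frechet \otimes \ultrafilter}$ used in (ii) $\Rightarrow$ (iii), which in the metric setting rests on a Fubini-type identity for $\limsup$ along the product filter (the iterated $\limsup$ must agree with the $\limsup$ along $\frechet \otimes \ultrafilter$). Since the predicate interpretations of a reduced product are defined by $\limsup$ rather than by a genuine limit, this compatibility deserves to be checked, even though it is the direct analogue of the identification already used implicitly in \ref{lemma:Palyutin equivalence}. Everything else is a reassembly of \ref{thm:preservation of Palyutin formulas}, \ref{lemma:atomless boolean algebra SCP cont}, and \ref{lemma:SCP completeness cont}.
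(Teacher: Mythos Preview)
Your proposal is correct and follows essentially the same route as the paper: (i) $\Rightarrow$ (ii) via \ref{lemma:atomless boolean algebra SCP cont} and \ref{lemma:SCP completeness cont}, (ii) $\Rightarrow$ (iii) via Keisler--Shelah, and (iii) $\Rightarrow$ (i) via bipreservation. The paper's proof is in fact terser than yours---it simply asserts that (ii) $\Rightarrow$ (iii) ``is a consequence of Keisler--Shelah's Theorem'' without mentioning the Fubini identification $(M^{\frechet})^{\ultrafilter} \simeq M^{\frechet \otimes \ultrafilter}$ that you correctly flag as the one point deserving care in the metric setting.
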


\begin{proof}
	Assume (i). Let $T$ be the theory containing every $\phi = \phi^M$ where $\phi$ is a Palyutin sentence. Moreover, $M^{\frechet}, N^{\frechet} \models T \cup \SCP$ according to \cref{lemma:atomless boolean algebra SCP cont} so using \cref{lemma:SCP completeness cont}, $M^{\frechet} \equiv N^{\frechet}$.
	
	(ii) $\Longrightarrow$ (iii) is a consequence of Keisler-Shelah's Theorem and (iii) $\Longrightarrow$ (i) is simply due to the bipreservation of Palyutin sentences.
\end{proof}

\begin{theorem}
	Let $T$ be a complete theory. The following are equivalent :
	\begin{enumerate}[(i)]
		\item $T \models \SCP$.
		\item $T$ is preserved under reduced products.
		\item $T$ is preserved under $\frechet$-products meaning that any reduced product of models of $T$ by $\frechet$ is also a model of $T$.
	\end{enumerate}
\end{theorem}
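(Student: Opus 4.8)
The plan is to prove the three-way equivalence cyclically, following the structure that worked in the classical case (\ref{lemma:Palyutin equivalence cont} and its surrounding lemmas), and leaning on the two preservation lemmas \ref{lemma:pres SCP cont} and \ref{lemma:atomless boolean algebra SCP cont} as the engine. The easy direction is (ii) $\Rightarrow$ (iii), which is immediate since $\frechet$-products are a special case of reduced products. So the real content is in closing the loop (i) $\Rightarrow$ (ii) and (iii) $\Rightarrow$ (i).

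For (i) $\Rightarrow$ (ii), I would argue as follows. Suppose $T \models \SCP$ and $T$ is complete, and let $M = (M_i)_{i \in I}$ be a family with $\indices{T} \in \filter$; I want $M_\filter \models T$. The key observation is that \ref{lemma:pres SCP cont} gives $M_\filter \models \SCP$ (after discarding the indices outside $\indices{T}$, which is legitimate since $\indices{T} \in \filter$). It then suffices to show $M_\filter \equiv M_i$ for a "typical" $i$, and here I would invoke \ref{lemma:SCP completeness cont}: since $T$ is complete it is in particular Palyutin-complete, so $T \cup \SCP$ is complete. Thus any two models of $T \cup \SCP$ have the same theory. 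Since each $M_i$ (for $i \in \indices{T}$) is a model of $T \cup \SCP$ and bipreservation of Palyutin sentences (\ref{thm:preservation of Palyutin formulas}) forces $M_\filter \equiv_{\Pal} M_i$, the completeness of $T \cup \SCP$ upgrades this to full elementary equivalence $M_\filter \equiv M_i \models T$, whence $M_\filter \models T$ as $T$ is complete.

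For (iii) $\Rightarrow$ (i), I would contrapose against completeness. Assume $T$ is preserved under $\frechet$-products; I want $T \models \SCP$. Take any model $M \models T$ and form the reduced power $M^{\frechet}$. By hypothesis $M^{\frechet} \models T$, and by \ref{lemma:atomless boolean algebra SCP cont} we also have $M^{\frechet} \models \SCP$. Now I use completeness of $T$: since $M \models T$ and $M^{\frechet} \models T$ and $T$ is complete, $M \equiv M^{\frechet}$; and since every axiom $\chi$ of $\SCP$ is a condition $\phi \leq 0$ satisfied by $M^{\frechet}$, we get $\chi^M = \chi^{M^{\frechet}} \leq 0$, so $M \models \SCP$. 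As $M$ was an arbitrary model of the complete theory $T$, this gives $T \models \SCP$.

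I expect the main obstacle to be the precise bookkeeping in (i) $\Rightarrow$ (ii), namely justifying that we may restrict to a family all of whose members model $T$ and that bipreservation genuinely yields $M_\filter \equiv_{\Pal} M_i$ for a single fixed $i$ rather than only some filter-averaged statement. The cleanest way to handle this is to note that for a \emph{Palyutin sentence} $\phi$, completeness of $T$ pins $\phi^{M_i}$ to a single value $r$ for all $i \in \indices{T}$, so $\limsup_{i \to \filter} \phi^{M_i} = r$ and bipreservation gives $\phi^{M_\filter} = r = \phi^{M}$ for any fixed $M \models T$; this sidesteps choosing a distinguished index. Everything else is a routine assembly of the cited lemmas, and the completeness hypothesis on $T$ is exactly what converts Palyutin-level agreement into full elementary equivalence via \ref{lemma:SCP completeness cont}.
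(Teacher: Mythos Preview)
Your proposal is correct and follows essentially the same route as the paper. For (i) $\Rightarrow$ (ii) the paper packages your argument slightly differently---it defines $T'$ as the Palyutin part of $T$, notes $T' \cup \SCP$ is complete so $T' \cup \SCP \models T$, and then invokes preservation of $T'$ and of $\SCP$---but this is exactly your argument in condensed form; your final paragraph's observation that completeness pins each Palyutin value is precisely what makes $T'$ preserved. For (iii) $\Rightarrow$ (i) the paper simply cites \ref{lemma:atomless boolean algebra SCP cont}, and your paragraph is the natural unpacking of that citation.
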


\begin{proof}
	Assume (i). Let $T'$ be the theory consisting of every $\phi = r$ where $\phi$ is a Palyutin sentence, $r \in \real$ and $T \models \phi = r$. $T'$ is Palyutin-complete because $T$ is complete so $T' \cup \SCP$ is complete. Hence, $T' \cup \SCP \models T$. By preservation of $T'$ and $\SCP$, we deduce (ii).
	
	(ii) $\Longrightarrow$ (iii) is obvious and (iii) $\Longrightarrow$ (i) is a consequence of \cref{lemma:atomless boolean algebra SCP cont}.
\end{proof}

\begin{corollary}
	A structure $M$ is a model of $\SCP$ if and only if $M \equiv M^{\frechet}$. In this case, $M$ is elementarily equivalent to all its reduced powers.
\end{corollary}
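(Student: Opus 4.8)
The plan is to deduce everything from the characterisation theorem just established, together with the fact recorded in \ref{lemma:atomless boolean algebra SCP cont} that $M^{\frechet} \models \SCP$ for \emph{every} structure $M$. I would first isolate this observation, since it drives both implications.

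For the easy direction, suppose $M \equiv M^{\frechet}$. Since $\SCP$ is an ordinary theory---a set of conditions of the form $\phi \leq 0$ with $\phi$ a sentence---and since $M^{\frechet} \models \SCP$ by \ref{lemma:atomless boolean algebra SCP cont}, elementary equivalence transfers this to $M$: for each axiom $\phi \leq 0$ of $\SCP$ we have $\phi^M = \phi^{M^{\frechet}} \leq 0$, whence $M \models \SCP$.

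For the other direction, and simultaneously for the final clause about arbitrary reduced powers, I would set $T \defeq \mathrm{Th}(M)$, which is complete. Assuming $M \models \SCP$ gives $T \models \SCP$, so the preceding characterisation theorem applies and shows that $T$ is preserved under reduced products. Now for any filter $\filter$, the reduced power $M^{\filter}$ is a reduced product of copies of $M$, all of which model $T$; preservation therefore yields $M^{\filter} \models T$, that is, $M \equiv M^{\filter}$. Specialising to $\filter = \frechet$ completes the equivalence, while keeping $\filter$ arbitrary gives the concluding statement that $M$ is elementarily equivalent to all its reduced powers.

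I expect no real obstacle here, as the substantive content has already been pushed into the characterisation theorem and into \ref{lemma:atomless boolean algebra SCP cont}. The only point deserving a moment's care is the bookkeeping that ``$M \equiv M^{\frechet}$'' (equality of all sentence-values) genuinely entails that $M$ and $M^{\frechet}$ satisfy the same $\SCP$-conditions; this is immediate once one recalls that each axiom of $\SCP$ is a single condition on one sentence.
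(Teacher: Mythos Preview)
Your argument is correct and is precisely the intended one: the paper states this corollary without proof, treating it as immediate from the preceding theorem together with \ref{lemma:atomless boolean algebra SCP cont}, and your write-up unpacks exactly those two ingredients in the natural way.
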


The previous results give us now obvious examples of models of $\SCP$ : in the classical setting, every atomless boolean algebra, infinite vector space, infinite sets are all models of $\SCP$ as their theories are preserved under reduced products. In the continuous setting, this is for instance the case for atomless probability measure algebras, or (unit balls of) infinite-dimensional Hilbert spaces.

The next result mimics in the metric setting a result of Palyutin which concerns stability \cite{palyutinCategoricalHornClasses1980}. A more recent translation of it in the classical setting can be found in \cite{debondtSaturationReducedProducts2024} where it was noticed that a complete theory with the simple cover property is unstable if and only if it has the independence property, which is also true for metric structures. We refer mostly to \cite{benyaacovContinuousFirstOrder2010} for the theory of stability in the continuous framework.

\begin{definition}
	Let $T$ be a complete theory.
	
	A formula $\phi(x,y)$ is \emph{stable} if for every $\epsilon > 0$, there is no infinite sequence $(a_i,b_i : i < \omega)$ in a model of $T$ such that for all $i < j$, $|\phi(a_i,b_j) - \phi(a_j, b_i)| \geq \epsilon$. We say that $T$ is \emph{stable} if every formula is stable.
	
	A formula $\phi(x,y)$ is \emph{NIP} if there are no $r \neq s$ in $\real$ and no sequences $(a_i)_{i < \omega}$ and $(b_S)_{S \in 2^\omega}$ in a model of $T$ such that for every $i < \omega$ and $S \subseteq \omega$,
	\[\phi(a_i, b_S) = \begin{cases}
		r & \text{ if } i \in S \\
		s & \text{ if } i \not\in S\\
	\end{cases} ~.\]
	We say that $T$ is \emph{NIP} if every formula is NIP.
\end{definition}

\begin{theorem}
	Let $T$ be a complete theory which is preserved under reduced products (or equivalently, such that $T \models \SCP$). The following are equivalent :
	\begin{enumerate}[(i)]
		\item $T$ is stable;
		\item For every Palyutin formula $\phi(x,y)$ such that $\phi \geq 0$,
		\[T \models \sup_{y, z} \left( \left( \sup_x \left| \phi(x,y) - \phi(x,z) \right| \right) - \inf_x \max \left( \phi(x,y), \phi(x,z) \right) \right) \leq 0 ~;\]
		\item $T$ is NIP.
	\end{enumerate}
\end{theorem}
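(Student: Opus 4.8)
The plan is to establish strong connectivity of the three conditions through the edges $(i)\Rightarrow(iii)$, $(ii)\Rightarrow(i)$, $(ii)\Rightarrow(iii)$ and, as the essential new input, $(iii)\Rightarrow(ii)$; a short check shows these four implications already force the three statements to be equivalent. The implication $(i)\Rightarrow(iii)$ is the standard fact that a stable continuous theory is $\mathrm{NIP}$, for which I would simply invoke the continuous stability theory of \cite{benyaacovContinuousFirstOrder2010}.

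For $(ii)\Rightarrow(i)$ and $(ii)\Rightarrow(iii)$ I would argue that (ii) is the strongest of the three, via two elementary local observations about a nonnegative Palyutin formula $\phi(x,y)$. If $\phi$ has the order property, witnessed by $\phi(a_i,b_j)\le r$ for $i<j$ and $\ge s$ for $i\ge j$ with $r<s$, then applying the nondecreasing connective $t\mapsto\max(0,t-r)$ (which keeps $\phi$ Palyutin and nonnegative) one reads off, for $j<k$, a point $x=a_i$ with $i<j<k$ forcing $\inf_x\max(\phi(x,b_j),\phi(x,b_k))=0$, and a point $x=a_i$ with $j\le i<k$ forcing $\sup_x|\phi(x,b_j)-\phi(x,b_k)|\ge s-r>0$; this violates the inequality in (ii). The same bookkeeping applied to two columns $e_S,e_T$ of an $\mathrm{IP}$-array — after the shift making the smaller of the two values equal to $0$ — again produces a common near-zero and a separation, contradicting (ii). Hence (ii) implies that no Palyutin formula has the order property and that none has $\mathrm{IP}$. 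I would then pass from Palyutin formulas to all formulas using \ref{lemma:SCP B-combination cont}: modulo $\SCP$ every formula is uniformly approximable by a $\mathcal B$-combination of Palyutin formulas, and since stability and $\mathrm{NIP}$ are both closed under $\mathcal B$-combinations and under uniform approximation, $T$ is stable and $\mathrm{NIP}$, giving $(ii)\Rightarrow(i)$ and $(ii)\Rightarrow(iii)$.

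The crux is $(iii)\Rightarrow(ii)$, which I would prove in the contrapositive form $\neg(ii)\Rightarrow\mathrm{IP}$. Assuming a nonnegative Palyutin $\phi(x,y)$ with parameters $b,c$ violating the inequality, a nondecreasing shift lets me arrange $\inf_x\max(\phi(x,b),\phi(x,c))=0$ and $\sup_x|\phi(x,b)-\phi(x,c)|=\delta>0$: so $b$ and $c$ share a common approximate zero of $\phi$ while their $\phi$-slices stay $\delta$-apart. The engine is the defining property of $\SCP$ (\ref{lemma:pres SCP cont} and the $\SCP$ axioms): for Palyutin formulas $\inf_x$ commutes with finite maxima, so finitely many individually satisfiable smallness constraints $\phi(x,b_i)\approx 0$ are \emph{simultaneously} realizable by one $x$. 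Combining this with the bipreservation of Palyutin formulas (\ref{thm:preservation of Palyutin formulas}) and the identification $M\equiv M^{\filter}$ valid for models of $\SCP$, I would amalgamate a sequence $(b_i)_{i<\omega}$ in which every pair behaves like $(b,c)$ — pairwise sharing a common approximate zero while remaining $\delta$-separated — and then, for each $S\subseteq\omega$, realize a point $a_S$ making $\phi(a_S,b_i)$ small exactly off $S$ and large on $S$, an array witnessing $\mathrm{IP}$.

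The main obstacle is precisely this final amalgamation: the $\SCP$ axioms hand me the simultaneous realization of the \emph{small} constraints, but producing in a single $x$ the \emph{mixed} pattern (small off $S$, large on $S$) while keeping the $b_i$ generic is the delicate point, and it is exactly where the simple cover property is indispensable — this mirrors the classical argument of Palyutin and de Bondt that an $\SCP$ theory is unstable if and only if it has $\mathrm{IP}$ \cite{palyutinCategoricalHornClasses1980,debondtSaturationReducedProducts2024}. As an alternative, the equivalence $(i)\Leftrightarrow(iii)$ can be obtained from the continuous analogue of Shelah's dichotomy — an unstable theory has the independence property or the strict order property — together with the fact that $\SCP$ excludes the strict order property, a definable strict order being incompatible with preservation under direct products; the inequality (ii) is then the explicit Palyutin witness sandwiched between the two and recovered from the local computations above.
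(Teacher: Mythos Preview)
Your treatment of $(i)\Rightarrow(iii)$ and $(ii)\Rightarrow(i)$ matches the paper's; the extra edge $(ii)\Rightarrow(iii)$ is harmless but redundant. The gap is in $\neg(ii)\Rightarrow\neg(iii)$: the amalgamation you sketch --- an infinite sequence $(b_i)$ pairwise behaving like $(b,c)$, followed by points $a_S$ realizing the mixed small/large pattern --- is not carried out, and the obstacle you flag is real. The $\SCP$ axioms only let you collapse finitely many simultaneous \emph{smallness} constraints of the form $\inf_x\max(\phi, D_j\psi_j)$; they give you no control, for that same $x$, over prescribed \emph{large} values of $\phi$ at other parameters. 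So the route you describe does not obviously close, and your fallback via the strict order property dichotomy, while plausible, is neither worked out nor what the paper uses.

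The paper's argument is much shorter and bypasses the obstacle entirely by changing the formula rather than amalgamating parameters. A single violation of (ii) already yields $M\models T$, parameters $a,b$ and witnesses $u,v\in M$ with
\[
\phi(u,b)>\max\bigl(\phi(u,a),\phi(v,a),\phi(v,b)\bigr)+\delta
\]
(after swapping $a,b$ if necessary). Now pass to the direct power $M^\omega$, which models $T$ by preservation under products, and set $\psi(x,y,z)\defeq\max(\phi(x,y),\phi(x,z))$, still Palyutin. For $k<\omega$ let $c_k\in M^\omega$ have $c_k(k)=u$ and $c_k(i)=v$ for $i\neq k$; for $S\subseteq\omega$ let $d_S\in(M^\omega)^2$ be given coordinatewise by $d_S(i)=(a,a)$ if $i\in S$ and $d_S(i)=(a,b)$ if $i\notin S$. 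Since $\psi$ is bipreserved, $\psi^{M^\omega}(c_k,d_S)=\sup_i\psi^M(c_k(i),d_S(i))$: the coordinate $i=k$ contributes $\phi(u,b)$ when $k\notin S$ and only $\phi(u,a)$ when $k\in S$, while every other coordinate contributes at most $\max(\phi(v,a),\phi(v,b))$. Hence $\psi^{M^\omega}(c_k,d_S)$ jumps by more than $\delta$ according to whether $k\in S$, so $\psi$ has $\mathrm{IP}$ in a model of $T$. No infinite amalgamation, no mixed realization problem: the single quadruple $(a,b,u,v)$ is reused at every coordinate, and the pattern is encoded not in the $x$-witness but in the pair $(y,z)$.
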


\begin{proof}
	(i) $\Rightarrow$ (iii) is well-known.
	
	Now, for (ii) $\Rightarrow$ (i), suppose (i) false. Consider an unstable formula $\phi(x,y)$. According to \cref{lemma:SCP B-combination cont}, we may assume that $\phi$ is a Palyutin formula. Let $M \models T$, $a_i, b_i \in M$ and $r < s$ such that $M \models \phi(a_i,b_j) \leq r$ and $M \models \phi(a_j, b_i) \geq s$ for every $i < j < \omega$. Define $\psi := \max(\phi - r, 0)$, which is a Palyutin formula and is nonnegative. Now fix $i < j < k < l < \omega$. We have $M \models \max(\psi(a_i, b_j), \psi(a_i,b_l)) = 0$ but $M \models \left| \psi(a_k, b_j) - \psi(a_k, b_l) \right| \geq s - r$, (ii) is thus false.
	
	It now suffices to prove that (iii) $\Rightarrow$ (ii). Suppose (ii) false. There exist a positive Palyutin formula $\phi$, $M \models T$, $a, b, u, v \in M$ and $\delta > 0$ such that :
	\begin{equation*}
	\left| \phi(u, a) - \phi(u, b) \right| > \max \left( \phi(v, a), \phi(v, b) \right) + \delta
	\end{equation*}
	We may assume that $\phi(u, b) > \phi(u, a) \geq 0$. Thus, since $\max(\phi(v, a), \phi(v, b)) \geq 0$, we get :
	\begin{equation}\label{eq:stability}
	\phi(u, b) > \max \left(\phi(u, a) ,\phi(v, a), \phi(v, b) \right) + \delta
	\end{equation}
	Now define for every $k < \omega$ and $S \in 2^\omega$, $c_k, d_S \in M^\omega$ such that $c_k(k) = u$, $c_k(i) = v$ if $i \neq k$, $d_S(i) = a b$ if $i \not\in S$ and $d_S(i) = a a$ if $i \in S$. Now define $\psi(x, y, z) := \max( \phi(x, y), \phi(x, z))$. 
	It is easy to check that for every $k < \omega$ and $S \in 2^\omega$ :
	\begin{align*}
		\psi(c_k, d_S) & \geq \max \left( \phi(u, a), \phi(u, b), \phi(v, a), \phi(v, b)\right) & \text{if $k \not\in S$}\\
		\psi(c_k, d_S) & \leq \max \left(\phi(u, a), \phi(v, a), \phi(v, b) \right) & \text{if $k \in S$}
	\end{align*}
	since $\psi(c_k(k), d_S(k)) = \max \left( \phi(u, a), \phi(u, b)\right)$ if $k \not\in S$ and $\psi(c_k(k), d_S(k)) = \phi(u, a)$ if $k \in S$. Using \ref{eq:stability} we deduce that $\psi$ is not NIP, and thus $T$ is not NIP.
\end{proof}

\section{Preservation theorems}
\label{sec:preservation theorems}

Let $T$ be a (consistent) theory, and $\phi$ be a continuous formula. We already know that if $T$ is axiomatisable by Palyutin conditions, then it is preserved under reduced products and preserved under reduced roots, and that if $\phi$ is a Palyutin formula, then it is bipreserved under reduced products. We will prove that the natural converses of these facts are also true. We will also prove some similar results, characterising other kinds of theories and formulas built from Palyutin formulas. Although not stated, the results of this section also hold in the classical setting replacing in the following $\HP$-sentences, $\PP$-sentences, and $\BP$-sentences respectively by Horn combinations of Palyutin sentences, positive combinations of Palyutin sentences, and Boolean combinations of Palyutin sentences.

\begin{definition}
A sentence of the form $\max(\theta_1,\dots,\theta_n)$ is :
\begin{itemize}
	\item A \emph{Horn-Palyutin sentence} (or $\HP$-sentence for short) if each $\theta_i$ can be written as $\min(D \phi, \psi)$ where $D$ is a nonincreasing unary connective and $\phi,\psi$ are Palyutin sentences.
	\item A \emph{positive-Palyutin sentence} (or $\PP$-sentence for short) if each $\theta_i$ can be written as $\min(\phi_1, \dots, \phi_n)$ where $\phi_1,\dots,\phi_n$ are Palyutin sentences.
	\item A \emph{Boolean-Palyutin sentence} (or $\BP$-sentence for short) if each $\theta_i$ can be written as $\min(\phi_1, \dots, \phi_m, D_1 \psi_1, \dots, D_n \psi_n)$ where $D_1,\dots,D_n$ are nonincreasing unary connectives and $\phi_1, \dots, \phi_m,\psi_1,\dots,\psi_n$ are Palyutin sentences.
\end{itemize}
\end{definition}

For the rest of the section, we fix a theory $T$. We define :
\begin{align*}
	T_{\Pal} := & \left\{ \phi \leq \phi^T : \phi \text{ Palyutin sentence} \right\} \\
	T_C := & \left\{ \phi \leq \phi^T : \phi \text{ $\mathfrak{C}$-sentence} \right\} \text{ where $\mathfrak{C}$ is $\HP$, $\PP$ or $\BP$.} \\
	T_{\Pal}^\ast := & \left\{ \phi = \phi^T : \phi \text{ Palyutin sentence} \right\}
\end{align*}
where $\phi^T \defeq \sup \{ \phi^M : M \models T \}$. Note that $T \models T_{\mathfrak{C}}$ where $\mathfrak{C}$ is $\Pal$, $\HP$, $\PP$ or $\BP$ so it is clear that these theories are consistent. For the latter, consistence comes from the following lemma.

\begin{lemma}\label{lemma:consistency T_P* cont}
	There exists a cartesian product $M$ of models of $T$ such that $M \models T_{\Pal}^\ast$.
\end{lemma}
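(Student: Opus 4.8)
The plan is to realise $\phi^T$ as the value of $\phi$ in a single cartesian product, simultaneously for all Palyutin sentences $\phi$. The key observation is that a cartesian product $\prod_i M_i$ is exactly the reduced product $\prod_i M_i / \{I\}$ by the trivial filter $\{I\}$, for which $\limsup_{i \to \{I\}} u_i = \sup_{i \in I} u_i$. Since Palyutin sentences are bipreserved under reduced products by \ref{thm:preservation of Palyutin formulas}, both inequalities of that definition combine to give, for every Palyutin sentence $\psi$,
\[\psi^{\prod_i M_i} = \sup_{i \in I} \psi^{M_i}~.\]
It therefore suffices to find a family $(M_i)_{i \in I}$ of models of $T$ with $\sup_i \psi^{M_i} = \psi^T$ for every Palyutin sentence $\psi$ at once.

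To build such a family I would index over pairs. Since all formulas are bounded, $\psi^T \defeq \sup\{\psi^N : N \models T\}$ is a finite real for each Palyutin sentence $\psi$. Let $I$ be the set of pairs $(\psi, n)$ with $\psi$ a Palyutin sentence and $n < \omega$. By definition of the supremum, for each $(\psi,n)$ I can choose $M_{\psi,n} \models T$ with $\psi^{M_{\psi,n}} \geq \psi^T - 1/n$, and I set $M \defeq \prod_{(\psi,n) \in I} M_{\psi,n}$.

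Fixing an arbitrary Palyutin sentence $\psi$, on the one hand $\psi^{M_{\chi,n}} \leq \psi^T$ for every index since each factor models $T$, so $\sup_{(\chi,n)} \psi^{M_{\chi,n}} \leq \psi^T$; on the other hand, restricting to the indices $(\psi, n)$ gives $\sup_n \psi^{M_{\psi,n}} \geq \sup_n (\psi^T - 1/n) = \psi^T$. Combining these with the displayed equation yields $\psi^M = \sup_{(\chi,n)} \psi^{M_{\chi,n}} = \psi^T$, i.e.\ $M \models (\psi = \psi^T)$. As $\psi$ was arbitrary, $M \models T_{\Pal}^\ast$.

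The only point requiring care is the first step: one must invoke \emph{both} directions of bipreservation to turn the product into an exact supremum (rather than a one-sided inequality), and check that the cartesian product really is the reduced product by $\{I\}$, so that its predicates are genuinely computed as suprema over the factors. Everything afterwards is a direct unwinding of the definition of $\psi^T$ together with the choice of an index set large enough to approximate every supremum simultaneously.
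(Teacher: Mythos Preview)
Your proof is correct and follows essentially the same approach as the paper: index a cartesian product by pairs of a Palyutin sentence and an approximation parameter, choose factors realising values close to $\psi^T$, and use bipreservation (with the trivial filter) to compute $\psi^M$ as a supremum. The only cosmetic difference is that the paper indexes by all $\epsilon>0$ rather than by $1/n$, and leaves the identity $\psi^{\prod_i M_i}=\sup_i \psi^{M_i}$ implicit where you spell it out.
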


\begin{proof}
	For every Palyutin sentence $\phi$ and $\epsilon > 0$, take $M_\phi^\epsilon \models T$ such that $M_\phi^\epsilon \models \phi > \phi^T - \epsilon$. Note that we also have for every Palyutin sentence $\psi$, $M_\phi^\epsilon \models \psi \leq \psi^T$. Let $M$ be the cartesian product of all the $M^\epsilon_\phi$'s. For every Palyutin sentence $\psi$ :
	\begin{equation*}
	\psi^M = \sup_{\phi, \epsilon} M^\epsilon_\phi = \psi^T \qedhere
	\end{equation*}
\end{proof}

\begin{theorem}\label{thm:char T_Pal}
	Let $M$ be an $\signature$-structure. The following are equivalent :
	\begin{enumerate}[(i)]
		\item $M \models T_{\Pal}$.
		\item There exist some filters $\filter, \filterA$, some family $N$ of models of $T$ and an $\signature$-structure $M_0$ such that $M^{\filter} \times M_0 \simeq N_{\filterA}$.
	\end{enumerate}
\end{theorem}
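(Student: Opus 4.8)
The plan is to mirror the classical proof of \ref{thm:char T_P}, using \ref{lemma:Palyutin equivalence cont} in place of its classical counterpart and the preservation package of \ref{thm:pres Palyutin theories}. I would dispatch (ii) $\Rightarrow$ (i) first, as it is the easy direction. Since $\phi^T = \sup\{\phi^M : M \models T\}$, every model of $T$ satisfies $\phi \leq \phi^T$ for each Palyutin sentence $\phi$, so every $N_i \models T$ is a model of $T_\Pal$. As $T_\Pal$ is axiomatised by Palyutin conditions, \ref{thm:pres Palyutin theories} tells us it is preserved under reduced products, reduced roots and cartesian factors. Hence, if $M^\filter \times M_0 \simeq N_\filterA$ with all $N_i \models T$, then $\indices{T_\Pal}_N$ is the whole index set so $N_\filterA \models T_\Pal$ by preservation under reduced products; preservation under cartesian factors then gives $M^\filter \models T_\Pal$, and preservation under reduced roots gives $M \models T_\Pal$, which is (i).

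For (i) $\Rightarrow$ (ii), I would begin by invoking \ref{lemma:consistency T_P* cont} to fix a cartesian product $N_I = \prod_{i \in I} N_i$ of models of $T$ with $N_I \models T_\Pal^\ast$. The crucial intermediate claim is that $M \times N_I \equiv_\Pal N_I$. This is a short computation: a finite cartesian product is the reduced product by the trivial filter on a two-element index set, for which $\limsup$ is just $\max$, so bipreservation of Palyutin sentences (\ref{thm:preservation of Palyutin formulas}) gives $\phi^{M \times N_I} = \max(\phi^M, \phi^{N_I})$ for every Palyutin sentence $\phi$. Since $M \models T_\Pal$ forces $\phi^M \leq \phi^T$ and $N_I \models T_\Pal^\ast$ forces $\phi^{N_I} = \phi^T$, we obtain $\phi^{M \times N_I} = \phi^T = \phi^{N_I}$, proving the claim. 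Now \ref{lemma:Palyutin equivalence cont} yields a filter $\filter$ with $(M \times N_I)^\filter \simeq (N_I)^\filter$.

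It remains to recast both sides in the form required by (ii), and this bookkeeping is the main obstacle. The first ingredient is that a reduced power distributes over finite cartesian products: because the product metric is the sup-metric (it is itself a trivial-filter reduced product) and $\limsup$ commutes with $\max$ by \ref{eq:prop of limsup}, one checks at the level of pseudometrics and interpretations that $(M \times N_I)^\filter \simeq M^\filter \times (N_I)^\filter$. The second ingredient is that a reduced power of a cartesian product of models of $T$ is again a reduced product of models of $T$: writing $(N_I)^\filter = (\prod_{i \in I} N_i)^\filter$ with $\filter$ a filter on some $J$, the identity $\limsup_{j \to \filter} \sup_{i \in I} u(j,i) = \limsup_{(j,i) \to \filterA} u(j,i)$, where $\filterA$ is the product filter of $\filter$ with the trivial filter $\{I\}$ on $J \times I$, shows $(N_I)^\filter \simeq \prod_{(j,i) \in J \times I} N_i / \filterA$, a genuine reduced product of models of $T$. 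Setting $M_0 \defeq (N_I)^\filter$ and $N_\filterA \defeq \prod_{(j,i)} N_i / \filterA$, the chain $M^\filter \times M_0 \simeq (M \times N_I)^\filter \simeq (N_I)^\filter \simeq N_\filterA$ is exactly (ii). I expect the only delicate part to be verifying these two isomorphisms carefully, since here the quotient is by a pseudometric rather than a congruence; but the two $\limsup$-identities above reduce both verifications to routine checks.
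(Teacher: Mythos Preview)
Your proposal is correct and follows essentially the same route as the paper: both directions match, and for (i) $\Rightarrow$ (ii) you invoke \ref{lemma:consistency T_P* cont}, establish $M \times N_I \equiv_\Pal N_I$, apply \ref{lemma:Palyutin equivalence cont}, and then distribute. The paper's proof is terser---it writes ``find $\filter$ such that $M^{\filter} \times M_0^{\filter} \simeq M_0^{\filter}$ and conclude''---leaving implicit exactly the two bookkeeping isomorphisms (distribution of reduced powers over finite products, and identification of $(N_I)^\filter$ as a reduced product of models of $T$) that you spell out; for (ii) $\Rightarrow$ (i) the paper gives the direct one-line computation $\phi^M = \phi^{M^\filter} \leq \phi^{M^\filter \times M_0} = \limsup_{i \to \filterA} \phi^{N_i} \leq \phi^T$ rather than invoking \ref{thm:pres Palyutin theories}, but this is the same content unpacked.
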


\begin{proof}
	Assume (i). Let $M_0$ be as in \cref{lemma:consistency T_P* cont}. We have $M \times M_0 \equiv_{\Pal} M_0$ since $\phi^M \leq \phi^{M_0}$ for every Palyutin sentence $\phi$. We now use \ref{lemma:Palyutin equivalence cont} to find $\filter$ such that $M^{\filter} \times M_0^{\filter} \simeq M_0^{\filter}$ and conclude.
	
	Assume (ii). Then, for every Palyutin sentence $\phi$ :
	\begin{equation*}
	\phi^M = \phi^{M^{\filter}} \leq \phi^{M^{\filter} \times M_0} = \limsup_{i \to \filterA} \phi^{N_i} \leq \phi^T \qedhere
	\end{equation*}
\end{proof}

\begin{corollary}\label{thm:char Palyutin theories}
	Let $T$ be a theory. The following are equivalent :
	\begin{enumerate}[(i)]
		\item $T$ is preserved under reduced products, reduced roots and (finite) cartesian factors.
		\item $T$ is axiomatizable by a Palyutin theory.
	\end{enumerate}
\end{corollary}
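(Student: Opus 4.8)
The plan is to prove that $T$ is logically equivalent to the Palyutin theory $T_{\Pal}$, so that the whole equivalence reduces to facts already established. The implication (ii) $\Rightarrow$ (i) costs almost nothing: if $T$ is axiomatised by some Palyutin theory $T'$, then $T$ and $T'$ have exactly the same models, and by \ref{thm:pres Palyutin theories} the theory $T'$ is preserved under reduced products, reduced roots and cartesian factors. Since each of these preservation properties depends only on the class of models, $T$ inherits all three. (In particular $T_{\Pal}$ itself, being a set of Palyutin conditions, enjoys them.)

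For the substantial direction (i) $\Rightarrow$ (ii), I would show that $T$ is in fact axiomatised by $T_{\Pal}$. One inclusion is immediate: by the very definition of $T_{\Pal}$ we have $T \models T_{\Pal}$, so every model of $T$ is a model of $T_{\Pal}$. It remains to prove the converse, that every model of $T_{\Pal}$ is a model of $T$, and this is precisely where hypothesis (i) enters.

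So take any $M \models T_{\Pal}$ and apply the characterisation \ref{thm:char T_Pal} to obtain filters $\filter, \filterA$, a family $N = (N_i)$ of models of $T$, and an $\signature$-structure $M_0$ such that $M^{\filter} \times M_0 \simeq N_{\filterA}$. Now I chase the three preservation properties in turn. First, $N_{\filterA}$ is a reduced product of models of $T$, so preservation under reduced products gives $N_{\filterA} \models T$, whence $M^{\filter} \times M_0 \models T$. Next, $M^{\filter} \times M_0$ is a cartesian product of two factors, so preservation under finite cartesian factors yields $M^{\filter} \models T$. Finally, $M^{\filter}$ is a reduced power of $M$, so preservation under reduced roots gives $M \models T$. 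This establishes $T_{\Pal} \models T$, hence $T$ is axiomatised by the Palyutin theory $T_{\Pal}$, as desired.

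I expect no genuine obstacle at this stage: all the difficulty has already been absorbed into \ref{thm:char T_Pal} (and, through it, the $\SCP$ machinery together with the Keisler--Shelah-type result \ref{lemma:Palyutin equivalence cont}), so the corollary is essentially a formal bookkeeping of the three preservation hypotheses. The only point deserving mild attention is to line up the shape of the decomposition $M^{\filter} \times M_0 \simeq N_{\filterA}$ with the three assumptions: reduced products to pass from the family $N$ up to $N_{\filterA}$, finite cartesian factors to split off the auxiliary factor $M_0$, and reduced roots to descend from $M^{\filter}$ back to $M$.
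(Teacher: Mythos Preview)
Your proposal is correct and follows exactly the paper's approach: the paper's proof is a terse two-liner invoking \ref{thm:char T_Pal} for (i) $\Rightarrow$ (ii) and \ref{thm:pres Palyutin theories} for the converse, and you have simply spelled out the chase through the three preservation hypotheses that the paper leaves implicit.
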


\begin{proof}
	Assume (i). Then by \ref{thm:char T_Pal}, we have that $T_{\Pal} \models T$, thus $T_{\Pal}$ axiomatises $T$. The other implication is just \ref{thm:pres Palyutin theories}.
\end{proof}

\begin{corollary}\label{thm:char Palyutin formulas}
	Let $\phi(x)$ be a formula. The following are equivalent :
	\begin{enumerate}[(i)]
		\item $\phi$ is bipreserved under reduced products.
		\item $\phi$ is approximable by Palyutin sentences.
	\end{enumerate}
\end{corollary}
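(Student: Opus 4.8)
The plan is to treat the two implications separately, the implication (ii)$\Rightarrow$(i) being immediate and (i)$\Rightarrow$(ii) carrying all the content. For (ii)$\Rightarrow$(i), I would recall that Palyutin formulas are bipreserved by \ref{thm:preservation of Palyutin formulas}, and observe that both defining inequalities, $\phi^{M_\filter}(a_\filter) \leq \limsup_{i \to \filter}\phi^{M_i}(a_i)$ and its reverse, are stable under uniform limits; hence any uniform limit of Palyutin formulas is again bipreserved, which gives (ii)$\Rightarrow$(i) once one reads (ii) as approximability by Palyutin formulas in the free variables $x$.

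For (i)$\Rightarrow$(ii) the argument runs parallel to the proof of \ref{thm:characterization of Horn sentences}, with Palyutin formulas in place of Horn formulas and \ref{thm:char Palyutin theories} in place of the Horn characterization. The genuinely new point, and the reason copreservation is needed here, is the following observation: if $\phi$ is bipreserved then, viewing the free variables as added constants, each theory $\{\phi \leq r\}$ is simultaneously preserved under reduced products, reduced roots and cartesian factors. Preservation under reduced products uses only the preservation inequality, since $\indices{\phi \leq r} \in \filter$ forces $\phi^{M_\filter}(a_\filter) \leq \limsup_{i \to \filter}\phi^{M_i}(a_i) \leq r$. The other two come from copreservation: applied to a constant family it gives $\phi^M(a) \leq \phi^{M^\filter}(a_\filter)$, whence preservation under reduced roots; applied to the trivial filter $\{I\}$, under which $\limsup_{i \to \{I\}}$ is the supremum over $i$, it gives $\sup_i \phi^{M_i}(a_i) \leq \phi^{\prod_i M_i}(a)$, whence preservation under cartesian factors.

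With this established I would fix $\epsilon > 0$ and choose $r_0 < \dots < r_k$ with consecutive gaps $\epsilon$ bracketing $\phi$, as demanded by \ref{lemma:approximation by B-combination}. For each $i < k$ the theory $\{\phi \leq r_i\}$ enjoys the three preservation properties above, so \ref{thm:char Palyutin theories} axiomatizes it by a Palyutin theory $T_i$; normalizing each of its conditions $\alpha \leq s$ to $\max(\alpha - s, 0) \leq 0$ keeps it Palyutin, since Palyutin formulas are closed under $\max$ and under nondecreasing connectives. A compactness argument then yields a single Palyutin formula $\psi_i$ (a finite $\max$ of such normalized conditions) and a threshold $\lambda_i > 0$ with $\phi \leq r_i \models \psi_i \leq 0$ and $\psi_i \leq \lambda_i \models \phi < r_{i+1}$: indeed $T_i \models \phi \leq r_i$, so $T_i$ together with the condition $\phi \geq r_i + \epsilon/2$ has no model, and by continuous compactness some finite part of it already fails to be approximately satisfiable, which supplies the finite $\max$ $\psi_i$ and a strictly positive $\lambda_i$. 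Feeding the $\psi_i$ and $\lambda_i$ into \ref{lemma:approximation by B-combination} produces nondecreasing connectives $C_i$ with $\|\phi - \max_i C_i\psi_i\| \leq 2\epsilon$; since each $C_i\psi_i$ and their maximum are again Palyutin, this exhibits $\phi$ as a uniform limit of Palyutin formulas.

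The step I expect to be the main obstacle is the compactness extraction of $\psi_i$ with a uniform $\lambda_i$: it must be phrased in the continuous sense, where unsatisfiability means failure of approximate satisfiability, so that one obtains the quantitative implication $\psi_i \leq \lambda_i \models \phi < r_{i+1}$ and not merely the qualitative $T_i \models \phi \leq r_i$. Once this is in place the remainder is a faithful transcription of the Horn case, the only conceptual novelty being that copreservation—absent in the Horn setting—is precisely what furnishes the reduced-root and cartesian-factor preservation required to apply \ref{thm:char Palyutin theories} rather than \ref{fact:characterization of T_H}.
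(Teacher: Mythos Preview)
Your proposal is correct and follows essentially the same route as the paper: reduce to \ref{thm:char Palyutin theories} applied to each theory $\{\phi \leq r_i\}$, extract a single Palyutin $\psi_i$ and threshold $\lambda_i$ by compactness, and feed these into \ref{lemma:approximation by B-combination}. If anything your write-up is more complete, since you explicitly verify that bipreservation of $\phi$ gives $\{\phi \leq r\}$ the three preservation properties (reduced products, reduced roots, cartesian factors) needed to invoke \ref{thm:char Palyutin theories}, a step the paper leaves implicit.
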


\begin{proof}
	Assume (i). Pick $\epsilon > 0$, and $r_0 < r_1 < \dots < r_k$ such that $|r_{i + 1} - r_i| < \epsilon$ for every $0 \leq i \leq k-1$ and $\phi$ is bounded by $r_0$ and $r_k$. From \ref{thm:char Palyutin theories} we know that for every $0 \leq i \leq k$, there exists a Palyutin theory $T_i$ equivalent to $(\phi \leq r_i)$. Using the Compactness Theorem and closure of Palyutin sentences by $\max$ and increasing unary connectives, there exist some Palyutin sentence $\phi_i$ and $\lambda_i > 0$ such that $\phi \leq r_i \models \phi_i \leq 0$ and $\phi_i < \lambda_i \models \phi < r_{i + 1}$ for every $0 \leq i < k$. We can now use \ref{lemma:approximation by B-combination} to conclude.
	 
	 (ii) $\Longrightarrow$ (i) is a direct consequence \ref{thm:preservation of Palyutin formulas}.
\end{proof}

We thus proved that Palyutin sentences as we defined them form a dense set of those continuous formulas which are preserved under reduced products in both directions. In the following, we obtain other results of the same flavour concerning the other types of formula we introduced.

\begin{theorem}\label{thm:char HP formulas}
	Let $M$ be an $\signature$-structure. The following are equivalent :
	\begin{enumerate}[(i)]
		\item $M \models T_{\HP}$.
		\item Some reduced power of $M$ is isomorphic to a reduced product of models of $T$.
	\end{enumerate}
\end{theorem}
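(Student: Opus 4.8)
The plan is to prove the two implications separately: (ii)$\Rightarrow$(i) is a short computation, while (i)$\Rightarrow$(ii) mirrors the strategy of \ref{thm:char T_Pal}, namely producing a reduced product of models of $T$ that is Palyutin-equivalent to $M$ and then invoking \ref{lemma:Palyutin equivalence cont}. The crucial difference with \ref{thm:char T_Pal} is that here no spurious cartesian factor survives, and the mechanism that removes it is the single-body shape of $\HP$-sentences together with the closure of Palyutin sentences under $\max$.

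For (ii)$\Rightarrow$(i), I would first record that every $\HP$-sentence $\Theta=\max_i\min(D_i\phi_i,\psi_i)$ is preserved under reduced products: since the $\phi_i,\psi_i$ are bipreserved by \ref{thm:preservation of Palyutin formulas}, the nonincreasing $D_i$ turns the relevant $\limsup$ into a $\liminf$, and using the elementary inequality $\min(\liminf u,\limsup v)\leq\limsup\min(u,v)$ together with \ref{eq:prop of limsup} one gets $\Theta^{M_\filter}\leq\limsup_{i\to\filter}\Theta^{M_i}$. Second, $\HP$-sentences are invariant under reduced powers, because each block is a Palyutin sentence and hence $\phi_i^{M^\filter}=\limsup_{i\to\filter}\phi_i^M=\phi_i^M$ on a constant family. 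Granting an isomorphism $M^\filter\simeq N_\filterA$ with all $N_j\models T$, these two facts give $\Theta^M=\Theta^{M^\filter}=\Theta^{N_\filterA}\leq\limsup_{j\to\filterA}\Theta^{N_j}\leq\Theta^T$, which is exactly $M\models T_{\HP}$.

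The substance is in (i)$\Rightarrow$(ii). Assuming $M\models T_{\HP}$, I would build a reduced product $R=\prod_i P_i/\filterA$ of models of $T$ with $\phi^R=\phi^M$ for every Palyutin sentence $\phi$, i.e.\ $M\equiv_{\Pal}R$. Everything reduces to the following realizability claim: for every finite set $F$ of Palyutin sentences, every $\epsilon>0$ and every distinguished $\phi_0\in F$, there is $P\models T$ with $\phi^P\leq\phi^M+\epsilon$ for all $\phi\in F$ and $\phi_0^P\geq\phi_0^M-\epsilon$. Given the claim, I would index the family by triples $(F,\epsilon,\phi_0)$, take $\filterA$ to be the filter generated by the tails $\{(F,\epsilon,\phi_0):F\supseteq F_0,\ \epsilon\leq\epsilon_0\}$, and choose $P_{(F,\epsilon,\phi_0)}$ as in the claim. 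The upper bounds then give $\limsup_{i\to\filterA}\phi^{P_i}\leq\phi^M$, while for each fixed $\phi$ the distinguished lower bounds occur $\filterA$-positively and give the reverse inequality, so $\phi^R=\phi^M$ for all Palyutin $\phi$. Finally \ref{lemma:Palyutin equivalence cont} yields a filter $\filter$ with $M^\filter\simeq R^\filter$, and since a reduced power of a reduced product of models of $T$ is again a reduced product of models of $T$, this is precisely (ii).

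It remains to prove the claim, and this is where I expect the main obstacle, and where the single-body shape of $\HP$-sentences is used. Arguing by contradiction, suppose the claim fails for some $F=\{\phi_0,\dots,\phi_m\}$ and $\epsilon$; then every $P\models T$ satisfies $\phi_0^P<\phi_0^M-\epsilon$, or $\phi^P>\phi^M+\epsilon$ for some $\phi\in F$. I would encode this dichotomy using the single Palyutin sentence $\phi:=\max_{\phi'\in F}(\phi'-(\phi')^M)$ (a $\max$ of additive shifts of Palyutin sentences, hence Palyutin), the Palyutin sentence $\psi:=\phi_0$, and a nonincreasing connective $D$ with $D(0)=\phi_0^M$ and $D(\epsilon)=\phi_0^M-\epsilon/2$, forming the $\HP$-sentence $\Theta:=\min(D\phi,\psi)$. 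The dichotomy forces $\Theta^P\leq\max(D(\epsilon),\phi_0^M-\epsilon)=\phi_0^M-\epsilon/2$ for every $P\models T$, whereas $\Theta^M=\min(D(0),\phi_0^M)=\phi_0^M$, so $\Theta^M>\Theta^T$, contradicting $M\models T_{\HP}$. The crucial (and only delicate) point is exactly this packaging: the joint upper control over all of $F$ and the disjunctive conclusion must be folded into a \emph{single} clause $\min(D\phi,\psi)$, which is possible only because Palyutin sentences are closed under $\max$ and under additive shifts; a multi-body clause would land in the strictly larger Boolean-Palyutin fragment, which is not preserved under reduced products. A secondary technical point to verify is the flattening of the iterated reduced product $R^\filter$ into a genuine reduced product of the models $P_i$.
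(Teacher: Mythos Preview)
Your proof is correct and follows the paper's approach: use the $\HP$-shape $\min(D\phi,\psi)$ to extract models of $T$ that approximate $M$ on prescribed Palyutin sentences, assemble these into a structure Palyutin-equivalent to $M$, and invoke \ref{lemma:Palyutin equivalence cont}. The only difference is in the assembly step: where the paper first takes an ultraproduct over the upper-bound parameter $(\phi,\epsilon)$ and then a cartesian product over the lower-bound parameter $(\psi,\delta)$, you handle both at once via a single tail filter on triples $(F,\epsilon,\phi_0)$ --- a minor streamlining, and both versions rely equally on the (standard, and in both cases implicit) flattening of iterated reduced products.
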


\begin{proof}
	Assume (i). For every Palyutin sentences $\phi, \psi$ and $\epsilon, \delta > 0$, we have
	\[M \models \min(\phi^M - \phi + \epsilon, \psi - \psi^M + \delta) > 0~.\]
	Hence, by assumption there exists some $N_{\phi, \psi}^{\epsilon, \delta} \models T$ such that
	\[N_{\phi, \psi}^{\epsilon, \delta} \models \min(\phi^M - \phi + \epsilon, \psi - \psi^M + \delta) > 0 ~.\]
	Fix now $\psi$ and $\delta$. We build some ultrafilter $\ultrafilter_\psi^\delta$ such that for every sentence $\theta$ :
	\begin{equation}\label{eq:B_psi^delta}
		B_\psi^\delta := \prod_{\phi, \epsilon} N_{\phi, \psi}^{\epsilon, \delta} / \ultrafilter^\delta_\psi \models \theta \leq \theta^M
	\end{equation}
	For this, we consider the family $E$ of sets of the form $\llbracket \theta < \theta^M + \eta \rrbracket$ where $\theta$ is a Palyutin sentence and $\eta > 0$ (the set of indices is taken for the family $N_{., \psi}^{., \delta}$). $E$ has the finite intersection property because Palyutin formulas are closed under $\max$. $E$ does not contain $\emptyset$ because $N_{\theta, \psi}^{\eta, \phi} \in \llbracket \theta < \theta^M + \eta \rrbracket$ by definition. Thus, we can find an ultrafilter $\ultrafilter^\delta_\psi \supseteq E$ and it is easy to check that \ref{eq:B_psi^delta} holds. Since $N_{\phi, \psi}^{\epsilon, \delta} \models \psi > \psi^M - \delta$, we also have $B_\psi^\delta \models \psi \geq \psi^M - \delta$. Now define $A$ as the cartesian product of all the $B_\psi^\delta$'s. Fix a Palyutin formula $\theta$. For $\delta > 0$, $\theta^A \geq \theta^{B^\delta_\theta} \geq \theta^M - \delta$ so $\theta^A \geq \theta^M$. Moreover, we know that $\theta \leq \theta^M$ for all the $B_\psi^\delta$'s so $\theta^A \leq \theta^M$ since $\theta$ is Palyutin. We now conclude using \ref{lemma:Palyutin equivalence cont} that some reduced power of $M$ is isomorphic to some reduced power of $A$.
	
	Assume (ii). Consider $\filter, \filterA$ and a family $(N_i)_{i \in I}$ of models of $T$ such that $M^\filter \simeq N_\filterA$. For every $\HP$-sentence $\phi$ :
	\begin{equation*}
		\phi^M = \phi^{M^\filter} = \phi^{N_\filterA} \leq \limsup_{i \to \filterA} \phi^{N_i} \leq \phi^T \qedhere
	\end{equation*}
\end{proof}

The cases of $\PP$ and $\BP$-theories and sentences can be treated the same way.

\begin{theorem}\label{thm:char PP formulas}
	Let $M$ be an $\signature$-structure. The following are equivalent :
	\begin{enumerate}[(i)]
		\item $M \models T_{\PP}$.
		\item There exist filters $\filter, \filterA$, an ultrafilter $\ultrafilter$, a family $N = (N_i)_{i \in I}$ of models of $T$ and some structure $M_0$ such that $M^\filter \times M_0 \simeq (N_\ultrafilter)^\filterA$.
	\end{enumerate}
\end{theorem}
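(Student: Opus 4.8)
The proof follows the template of \ref{thm:char HP formulas}. The only structural differences are that a $\PP$-sentence is built from Palyutin sentences using $\max$ and $\min$ alone (no nonincreasing connective), so it is merely \emph{copreserved} rather than bipreserved, and that the inner factor on the right must be a genuine ultraproduct so that Łoś' Theorem applies. Two elementary facts will be used repeatedly. First, any continuous combination $\phi$ of Palyutin sentences satisfies $\phi^{K^{\filter}} = \phi^{K}$ for every structure $K$ and filter $\filter$: by \ref{thm:preservation of Palyutin formulas} each Palyutin value is unchanged on a reduced power, and connectives act pointwise on the reals. Second, the reduced power distributes over finite cartesian products, $(K \times K')^{\filter} \simeq K^{\filter} \times (K')^{\filter}$, which is immediate from the definition of the metric reduced product since the product metric and the predicates are computed componentwise and $\limsup$ commutes with $\max$.

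For (ii) $\Rightarrow$ (i), assume $M^{\filter} \times M_0 \simeq (N_{\ultrafilter})^{\filterA}$ with all $N_i \models T$, and let $\phi$ be a $\PP$-sentence. Being a $\max$ of $\min$'s of bipreserved formulas, $\phi$ is copreserved, so $\phi^{M^{\filter}} \leq \phi^{M^{\filter} \times M_0}$. Combining this with the two facts above and the isomorphism, $\phi^{M} = \phi^{M^{\filter}} \leq \phi^{M^{\filter} \times M_0} = \phi^{(N_{\ultrafilter})^{\filterA}} = \phi^{N_{\ultrafilter}}$. Since $\ultrafilter$ is an ultrafilter, Łoś' Theorem gives $\phi^{N_{\ultrafilter}} = \lim_{i \to \ultrafilter} \phi^{N_i} \leq \phi^{T}$, the last inequality because each $N_i \models T$. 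Hence $\phi^{M} \leq \phi^{T}$, i.e.\ $M \models T_{\PP}$; this mirrors the closing computation of \ref{thm:char HP formulas}, and it is precisely here that an ultraproduct, rather than an arbitrary reduced product, is required.

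The content is in (i) $\Rightarrow$ (ii). Assuming $M \models T_{\PP}$, I first produce an ultraproduct of models of $T$ that dominates $M$ on every Palyutin sentence. For a finite set $\Sigma$ of Palyutin sentences and $\epsilon > 0$, the sentence $\phi_{\Sigma, \epsilon} \defeq \min_{\alpha \in \Sigma} (\alpha - \alpha^{M} + \epsilon)$ is a $\PP$-sentence, each $\alpha - \alpha^{M} + \epsilon$ being Palyutin (a nondecreasing affine connective applied to $\alpha$) and a single $\min$ of Palyutin sentences being a $\PP$-sentence. As $\phi_{\Sigma, \epsilon}^{M} = \epsilon > 0$ and $M \models T_{\PP}$, we get $\phi_{\Sigma, \epsilon}^{T} \geq \epsilon > 0$, so some $N_{\Sigma, \epsilon} \models T$ satisfies $\phi_{\Sigma, \epsilon}^{N_{\Sigma, \epsilon}} > 0$, that is $\alpha^{N_{\Sigma, \epsilon}} > \alpha^{M} - \epsilon$ for all $\alpha \in \Sigma$. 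Index these models by the directed set $I$ of pairs $(\Sigma, \epsilon)$ and fix an ultrafilter $\ultrafilter$ on $I$ containing every tail $\{(\Sigma, \epsilon) : \alpha \in \Sigma,\ \epsilon \leq \eta\}$ (these sets have the finite intersection property). Then $N_{\ultrafilter} \defeq \prod_i N_i / \ultrafilter$ satisfies $\alpha^{N_{\ultrafilter}} = \lim_{i \to \ultrafilter} \alpha^{N_i} \geq \alpha^{M}$ for every Palyutin sentence $\alpha$.

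It remains to assemble the isomorphism. Since $N_{\ultrafilter}$ dominates $M$, we have $\max(\alpha^{M}, \alpha^{N_{\ultrafilter}}) = \alpha^{N_{\ultrafilter}}$, hence $\alpha^{M \times N_{\ultrafilter}} = \alpha^{N_{\ultrafilter}}$ for every Palyutin sentence $\alpha$, i.e.\ $M \times N_{\ultrafilter} \equiv_{\Pal} N_{\ultrafilter}$. By \ref{lemma:Palyutin equivalence cont} there is a filter $\filter$ with $(M \times N_{\ultrafilter})^{\filter} \simeq (N_{\ultrafilter})^{\filter}$, and distributivity of the reduced power over the cartesian product rewrites the left-hand side as $M^{\filter} \times (N_{\ultrafilter})^{\filter}$. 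Setting $M_0 \defeq (N_{\ultrafilter})^{\filter}$ and $\filterA \defeq \filter$ yields $M^{\filter} \times M_0 \simeq (N_{\ultrafilter})^{\filterA}$, which is (ii). I expect the main obstacle to be the simultaneous domination in the previous paragraph: one must realise, inside a \emph{single} ultraproduct of models of $T$, all the Palyutin values of $M$ from above at once. This is exactly what the closure of $\PP$-sentences under $\min$ buys, and it is the feature separating this statement from the $T_{\Pal}$ case, where only one Palyutin value is controlled at a time.
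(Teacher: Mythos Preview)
Your proof is correct and follows precisely the adaptation the paper indicates when it says the $\PP$ case ``can be treated the same way'' as \ref{thm:char HP formulas}. The only difference from the $\HP$ argument is structural and you handle it correctly: since $\PP$-sentences carry no nonincreasing connective, you cannot force an ultraproduct of models of $T$ to match $M$ on Palyutin values exactly, only to dominate them; the deficit is then absorbed by the factor $M_0$ exactly as in \ref{thm:char T_Pal}, and the closure of $\PP$-sentences under $\min$ is what makes a single ultraproduct (rather than a cartesian product of ultraproducts as in the $\HP$ proof) suffice for simultaneous domination.
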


\begin{theorem}\label{thm:char BP formulas}
	Let $M$ be an $\signature$-structure. The following are equivalent :
	\begin{enumerate}[(i)]
		\item $M \models T_{\BP}$.
		\item There exist filters $\filter, \filterA$, an ultrafilter $\ultrafilter$ and a family $N = (N_i)_{i \in I}$ of models of $T$ such that $M^\filter \simeq (N_\ultrafilter)^\filterA$.
	\end{enumerate}
\end{theorem}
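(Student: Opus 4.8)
The plan is to run the proof of \ref{thm:char HP formulas} essentially verbatim, the one genuinely new point being that for $\BP$-sentences a single ultraproduct suffices where the $\HP$ case needed a cartesian product of ultraproducts.

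For the easy direction (ii) $\Rightarrow$ (i) I would observe that any $\BP$-sentence $\phi$ is obtained from Palyutin sentences $\chi_1,\dots,\chi_s$ by the connectives $\max$, $\min$ and finitely many nonincreasing $D_k$, so its value in a structure $P$ is a fixed real function of $\chi_1^P,\dots,\chi_s^P$. Since Palyutin sentences are bipreserved, each $\chi_l$ has the same value in a structure and in any of its reduced powers (a $\limsup$ of a constant family), whence $\phi^{M^\filter} = \phi^M$ and $\phi^{(N_\ultrafilter)^\filterA} = \phi^{N_\ultrafilter}$. Combining the isomorphism with Łoś' Theorem for the ultraproduct $N_\ultrafilter$, I would then conclude
\[\phi^M = \phi^{M^\filter} = \phi^{(N_\ultrafilter)^\filterA} = \phi^{N_\ultrafilter} = \lim_{i \to \ultrafilter} \phi^{N_i} \leq \phi^T ~,\]
the inequality holding because each $N_i$ is a model of $T$; hence $M \models T_{\BP}$.

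For (i) $\Rightarrow$ (ii) the real work is to produce an ultraproduct of models of $T$ that is Palyutin-equivalent to $M$. Given Palyutin sentences $\psi_1,\dots,\psi_p,\chi_1,\dots,\chi_q$ and reals $\delta_j,\epsilon_k>0$, I would consider the $\BP$-sentence
\[\min\left( \psi_1 - \psi_1^M + \delta_1, \dots, \psi_p - \psi_p^M + \delta_p,\ \chi_1^M - \chi_1 + \epsilon_1, \dots, \chi_q^M - \chi_q + \epsilon_q \right) ~,\]
whose first $p$ conjuncts are Palyutin sentences and whose last $q$ conjuncts are nonincreasing connectives applied to Palyutin sentences. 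This sentence has strictly positive value at $M$, so $M \models T_{\BP}$ forces its supremum over models of $T$ to be positive, giving a model $N \models T$ with $\psi_j^N > \psi_j^M - \delta_j$ and $\chi_k^N < \chi_k^M + \epsilon_k$ simultaneously. Placing a single Palyutin sentence in both roles, for any finite family $\theta_1,\dots,\theta_r$ and any $\eta>0$ I thus obtain $N \models T$ with $|\theta_l^N - \theta_l^M| < \eta$ for all $l$. Indexing these witnesses by a set $I$, the sets $\indices{|\theta - \theta^M| < \eta}$ (for $\theta$ Palyutin and $\eta>0$) avoid $\emptyset$ and, using closure of Palyutin sentences under $\max$, have the finite intersection property; extending them to an ultrafilter $\ultrafilter$ produces an ultraproduct $N_\ultrafilter \defeq \prod_{i} N_i / \ultrafilter$ with $\theta^{N_\ultrafilter} = \lim_{i \to \ultrafilter} \theta^{N_i} = \theta^M$ for every Palyutin sentence $\theta$, that is $N_\ultrafilter \equiv_{\Pal} M$. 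Applying \ref{lemma:Palyutin equivalence cont} then yields a filter $\filterA$ with $M^{\filterA} \simeq (N_\ultrafilter)^{\filterA}$, which is exactly (ii).

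I expect the main obstacle to be the finite-intersection bookkeeping, which is precisely where closure of Palyutin sentences under $\max$ enters, exactly as in \ref{thm:char HP formulas}. The structural reason the argument is in fact simpler here is worth emphasising: a $\BP$-sentence may carry several positive Palyutin conjuncts inside one $\min$, which is what allows both the upper and the lower bound of each $\theta$ to be witnessed in a single model of $T$, and hence lets one work with a single ultraproduct $N_\ultrafilter$; by contrast, in the $\HP$ case only one positive Palyutin sentence is available per disjunct, so a cartesian product is needed to assemble the separate bounds.
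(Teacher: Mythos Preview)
Your proof is correct and matches the approach the paper indicates (the paper omits this proof, noting only that it ``can be treated the same way'' as the $\HP$ case in \ref{thm:char HP formulas}); in particular your observation that a single ultraproduct suffices, because a $\BP$-clause may carry several positive Palyutin conjuncts, is exactly the point. One small remark: once you index the witnesses by finite families $(\{\theta_1,\dots,\theta_r\},\eta)$ as you do, the finite intersection property follows immediately by passing to the union of the families and the minimum of the $\eta$'s---the closure of Palyutin sentences under $\max$, which was the essential device for FIP in the $\HP$ argument, plays no role here and can be dropped from your justification.
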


As a corollary of \ref{fact:characterization of T_H}, \ref{thm:char HP formulas}, \ref{thm:char PP formulas} and \ref{thm:char BP formulas} we obtain semantic characterisations of the theories axiomatisable by each kind of formula. These results as well as \ref{thm:char Palyutin theories} are summed up in \ref{table:axiomatisability of continuous theories}.

\begin{table}[h]
\centering
\begin{tabular}{|| l | l ||}
\hline
 Axiomatisable by a ... theory & Preservation properties \\ [0.5ex] 
 \hline\hline
 Horn & Reduced products \\
 \hline
 Palyutin & Reduced products, reduced roots, and direct factors \\
 \hline
 Horn-Palyutin & Reduced products and reduced roots \\
 \hline
 Positive-Palyutin & Reduced powers, reduced roots and direct factors \\
 \hline
 Boolean-Palyutin & Reduced powers and reduced roots \\
 \hline
\end{tabular}
\caption{Axiomatisability of continuous theories characterised by their preservation properties}
\label{table:axiomatisability of continuous theories}
\end{table}

Finally, using compactness arguments in the same spirit as in the proofs of \ref{thm:characterization of Horn sentences} and \ref{thm:char Palyutin formulas}, one gets semantic characterisations of formulas axiomatised by $\HP$, $\PP$ and $\BP$ formulas which are summarised in \ref{table:approximability of continuous formulas}.

\begin{table}[h]
\centering
\begin{tabular}{||l | l ||} 
 \hline
 Approximable by ... sentences & Preservation properties \\ [0.5ex] 
 \hline\hline
 Horn & Reduced products \\
 \hline
 Palyutin & Reduced products and reduced factors \\
 \hline
 Horn-Palyutin & Reduced products and reduced roots \\
 \hline
 Positive-Palyutin & Reduced powers and reduced factors \\
 \hline
 Boolean-Palyutin & Reduced powers and reduced roots \\
 \hline
\end{tabular}
\caption{Approximability of continuous formulas characterised by their preservation properties}
\label{table:approximability of continuous formulas}
\end{table}

\bibliographystyle{alpha}
\bibliography{biblio}

@incollection{benyaacovModelTheoryMetric2008,
  title = {Model Theory for Metric Structures},
  booktitle = {Model {{Theory}} with {{Applications}} to {{Algebra}} and {{Analysis}}},
  author = {{Ben Yaacov}, Ita{\"i} and Berenstein, Alexander and Henson, C. Ward and Usvyatsov, Alexander},
  year = {2008},
  pages = {315--427},
  publisher = {Cambridge University Press},
  doi = {10.1017/CBO9780511735219.011},
  isbn = {978-0-521-70908-8 978-0-511-73521-9}
}

@article{benyaacovContinuousFirstOrder2010,
  title = {Continuous First Order Logic and Local Stability},
  author = {{Ben Yaacov}, Ita{\"i} and Usvyatsov, Alexander},
  year = {2010},
  journal = {Transactions of the American Mathematical Society},
  volume = {362},
  number = {10},
  issn = {0002-9947},
  doi = {10.1090/S0002-9947-10-04837-3}
}

@article{goldbringContinuousSentencesPreserved2022,
  title = {Continuous Sentences Preserved under Reduced Products},
  author = {Goldbring, Isaac and Keisler, H. Jerome},
  year = {2022},
  journal = {The Journal of Symbolic Logic},
  volume = {87},
  number = {2},
  pages = {649--681},
  issn = {0022-4812, 1943-5886},
  doi = {10.1017/jsl.2020.28}
}

@article{palmgrenDirectProofThat1994,
  title = {A Direct Proof That Certain Reduced Products Are Countably Saturated},
  author = {Palmgren, Erik},
  journal = {U.U.D.M. Report},
  year = {1994},
  number = {27}
}

@article{palyutinCategoricalHornClasses1980,
  title = {Categorical Horn Classes. I.},
  author = {Palyutin, E. A.},
  year = {1980},
  journal = {Algebra i Logika},
  volume = {19},
  number = {5},
  pages = {582--614}
}

@book{changModelTheoryThird2013,
  title = {Model Theory: Third Edition},
  shorttitle = {Model Theory},
  author = {Chang, C. C. and Keisler, H. Jerome},
  year = {2013},
  publisher = {Dover Publications}
}

@article{lopesReducedProductsSheaves2013,
  title = {Reduced Products and Sheaves of Metric Structures: Reduced Products and Sheaves of Metric Structures},
  author = {Lopes, Vinicius Cif{\'u}},
  year = {2013},
  journal = {Mathematical Logic Quarterly},
  volume = {59},
  number = {3},
  pages = {219--229},
  issn = {09425616},
  doi = {10.1002/malq.201200084}
}

@article{keislerReducedProductsHorn1965,
  title = {Reduced Products and {{Horn}} Classes},
  author = {Keisler, H. Jerome},
  year = {1965},
  journal = {Transactions of the American Mathematical Society},
  volume = {117},
  number = {0},
  pages = {307--328},
  issn = {0002-9947, 1088-6850},
  doi = {10.1090/S0002-9947-1965-0170812-4}
}

@article{galvinHornSentences1970,
  title = {Horn Sentences},
  author = {Galvin, Fred},
  year = {1970},
  journal = {Annals of Mathematical Logic},
  volume = {1},
  number = {4},
  pages = {389--422},
  issn = {00034843},
  doi = {10.1016/0003-4843(70)90002-1}
}

@article{hornSentencesWhichAre1951,
  title = {On {{Sentences Which}} Are {{True}} of {{Direct Unions}} of {{Algebras}}},
  author = {Horn, Alfred},
  year = {1951},
  journal = {The Journal of Symbolic Logic},
  volume = {16},
  number = {1},
  pages = {14--21},
  publisher = {Association for Symbolic Logic},
  issn = {0022-4812},
  doi = {10.2307/2268661}
}

@article{debondtSaturationReducedProducts2024,
  title = {Saturation of Reduced Products},
  author = {{De Bondt}, Ben and Farah, Ilijas and Vignati, Alessandro},
  year = {2024},
  month = jan,
  number = {arXiv:2401.12539},
  eprint = {2401.12539},
  primaryclass = {math},
  publisher = {arXiv}
}

@mastersthesis{fronteauProduitsReduitsLogique2023,
  title = {{Produits Réduits en Logique Continue}},
  author = {Fronteau, Ivory},
  year = {2023},
  school = {Université Paris Cité}
}

\end{document}